\definecolor{shadecolor}{gray}{0.875}
\newtheorem{thrm}{Theorem}[section]
\newtheorem{lem}[thrm]{Lemma}
\newtheorem{cor}[thrm]{Corollary}
\newtheorem{prop}[thrm]{Proposition}
\newtheorem{conj}[thrm]{Conjecture}
\newtheorem*{thrma}{Theorem A}
\newtheorem*{thrmb}{Theorem B}
\newtheorem*{thrmc}{Theorem C}
\theoremstyle{definition}
\newtheorem{defn}[thrm]{Definition}
\newtheorem{ex}[thrm]{Example}
\newtheorem{rmk}[thrm]{Remark}
\newtheorem*{remark}{Remark}
\DeclareMathOperator{\st}{\ \big|\ }
\DeclareMathOperator{\bl}{Bl}
\DeclareMathOperator{\sh}{\varepsilon\,}
\DeclareMathOperator{\fuj}{\mu\,}
\DeclareMathOperator{\Eff}{\overline{Eff}}
\DeclareMathOperator{\Mov}{\overline{Mov}}
\DeclareMathOperator{\Nef}{{Nef}}
\DeclareMathOperator{\Supp}{Supp\,}
\DeclareMathOperator{\Sing}{Sing\,}
\DeclareMathOperator{\vol}{vol\,}
\DeclareMathOperator{\mult}{mult}
\DeclareMathOperator{\Null}{Null\,}
\let\cal\mathcal
\let\frak\mathfrak
\let\bb\mathbb
\newcommand{\factor}[2]{\left. \raise 1pt\hbox{\ensuremath{#1}} \right/
        \hskip -2pt\raise -3pt\hbox{\ensuremath{#2}}}
\numberwithin{equation}{thrm}
\begin{document}

\title{Seshadri constants for curve classes}
\author{Mihai Fulger}
\address{Department of Mathematics, University of Connecticut, Storrs, 341 Mansfield Road, Unit 1009, Storrs, CT 06269}
\address{EPFL/SB/TAN, Station 8, CH-1015 Lausanne, Switzerland}
\address{Institute of Mathematics of the Romanian Academy, P. O. Box 1-764, RO-014700,
Bucharest, Romania}
\email{mihai.fulger@uconn.edu}

\begin{abstract}
We develop a local positivity theory for movable curves on projective varieties similar to the classical Seshadri
constants of nef divisors. We give analogues of the Seshadri ampleness criterion,
of a characterization of the augmented base locus of a big and nef divisor, and of the interpretation of Seshadri constants as an asymptotic measure of jet separation.
\end{abstract}

\maketitle

\section{Introduction}

Let $X$ be a projective variety of dimension $n$ over an algebraically closed field.
Fix $L$ a \emph{nef} class in the real N\' eron--Severi space $N^1(X)$, and a closed point $x\in X$. The Seshadri constant $\sh(L;x)$ is a measure of local positivity for $L$ at $x$. It can be defined as

\begin{equation}
\label{seshdefdiv}
\sh(L;x):=\max\bigl\{t\geq 0\ |\ \pi^*L-tE\mbox{ is a nef divisor}\bigr\},
\end{equation}
where $\pi:\bl_xX\to X$ denotes the blow-up of $x$, and $E$ is the natural exceptional divisor. An equivalent interpretation is 

\begin{equation}
\label{seshdefdivmult}
\sh(L;x)=\inf\left\{\frac {L\cdotp C}{\mult_xC}\st C\mbox{ reduced irreducible curve on }X\mbox{ through }x\right\}.
\end{equation}

\noindent The Seshadri constant $\sh(L;x)$ is a homogeneous numerical invariant of $L$. Its properties are carefully detailed in \cite{dem,primer} and \cite[Chapter 5]{laz04}).

\vskip.25cm
In this paper we study a similar local positivity measure for curve classes.
Inspired by \eqref{seshdefdivmult} we consider the following

\begin{defn}\label{defn:seshdef}Let $C$ be a $1$-cycle on $X$, and fix a closed point $x\in X$. Set
$$\sh(C;x):=\inf\left\{\frac{C\cdotp L}{\mult_xL}\ \big|\ L\mbox{ effective Cartier divisor on }X\mbox{ through }x\right\}.$$
\end{defn}
 
\noindent This is a numerical homogeneous invariant of $C$. 
It is non-negative when the numerical class of $C$ is in the \emph{movable} cone of curves $\Mov_1(X)$, dual to the cone of effective divisors in $N^1(X)$. In the sequel we focus on such movable classes.
When $X$ is singular, the regularity condition that $L$ be Cartier is necessary for defining $C\cdotp L$.
We will later look at a similar notion that takes into account all Weil divisors through $x$.

\begin{rmk}The function $\sh(\cdot;x):\Mov_1(X)\to\bb R$
is 1-homogeneous, nonnegative, and concave. It is positive and locally uniformly continuous on the strict interior of the cone.
\end{rmk}

An analogous interpretation to \eqref{seshdefdiv} is

\begin{prop}\label{prop:seshbypullcurve}Let $x\in X$ be a \emph{smooth} point on a projective variety. Let $\ell$ be a line in the exceptional divisor $E\simeq\bb P^{n-1}$ of the blow-up $\pi:\bl_xX\to X$. Let $C\in\Mov_1(X)$.
Then $$\sh(C;x)=\max\bigl\{t \st\ \pi^*C-t\ell\in\Mov_1(\bl_xX)\bigr\}.$$
\end{prop}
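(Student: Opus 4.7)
The plan is to unfold the definition of $\Mov_1(\bl_xX)$ as the dual of the pseudo-effective cone and to test the class $\pi^*C - t\ell$ against the generators of the effective cone on the blow-up. Smoothness of $x$ ensures $E \simeq \bb P^{n-1}$, that $\ell\cdot E = -1$, and that effective divisors on $\bl_xX$ admit a clean decomposition via strict transforms.

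First I would identify the pseudo-effective cone on $\bl_xX$: every effective Cartier divisor $D$ can be written $D = \widetilde L + mE$ with $m \geq 0$, where $L = \pi_* D$ is effective on $X$ and $\widetilde L = \pi^*L - (\mult_xL)E$ is its strict transform. Hence the effective cone is generated, as a convex cone, by $E$ together with the classes $\pi^*L - (\mult_xL)E$ for $L$ an effective Cartier divisor on $X$ (possibly not passing through $x$, in which case $\mult_xL = 0$). By duality, $\pi^*C - t\ell \in \Mov_1(\bl_xX)$ if and only if it pairs nonnegatively with each of these generators.

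Next comes a direct intersection computation using the projection formula together with $\pi_*E = 0$, $\pi_*\ell = 0$, and $\ell\cdot E = -1$. These identities yield $\pi^*C\cdot E = 0$, $\pi^*L\cdot \ell = 0$, and $\pi^*C\cdot\pi^*L = C\cdot L$. Pairing with $E$ gives $(\pi^*C - t\ell)\cdot E = t$, forcing $t \geq 0$. Pairing with $\pi^*L - (\mult_xL)E$ gives $C\cdot L - t\mult_xL$: when $L$ avoids $x$, the resulting inequality $C\cdot L \geq 0$ is automatic from $C \in \Mov_1(X)$ and $L$ being effective, while when $L$ passes through $x$ it becomes $t \leq C\cdot L/\mult_xL$.

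Taking the infimum over all $L$ through $x$, the admissible $t$ are exactly those in $[0, \sh(C;x)]$, which gives the claimed maximum. The step I expect to require the most care is the identification of the effective cone of $\bl_xX$ via the strict transform decomposition; this is standard but genuinely relies on smoothness at $x$ to ensure that $E$ is an honest irreducible Cartier divisor and that the multiplicity formula $\pi^*L = \widetilde L + (\mult_xL)E$ holds, so that the two families really do generate the effective cone.
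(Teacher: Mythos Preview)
Your proposal is correct and follows essentially the same approach as the paper's proof (Proposition~\ref{prop:seshadriviapull}): both use the BDPP duality $\Mov_1=\Eff^{1\,\vee}$, decompose effective divisors on $\bl_xX$ as a nonnegative multiple of $E$ plus a strict transform $\pi^*L-(\mult_xL)E$, and then reduce the movability condition on $\pi^*C-t\ell$ to the inequalities $t\geq 0$ and $C\cdot L\geq t\,\mult_xL$. The only cosmetic difference is that the paper treats the two inequalities $\max\leq\sh(C;x)$ and $\max\geq\sh(C;x)$ separately, whereas you characterize the admissible interval $[0,\sh(C;x)]$ in one pass.
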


\noindent The condition that $x$ be a smooth point of $X$ is necessary for constructing a good pullback $\pi^*C$ as in Definition \ref{def:pullcurve} that respects numerical equivalence. 

\subsection{Examples}

\begin{ex}[Surfaces are a familiar picture] Let $X$ be a smooth projective surface.
There is a canonical identification $N^1(X)\simeq N_1(X)$.
Through it, being nef is the same as being movable.
In this case the theory of Seshadri constants is the same for curves as it is for divisors.
\end{ex}

\begin{ex}[Projective space, Example \ref{ex:pn}] Let $X=\bb P^n$, let $\Lambda$ be the class of a line, and $H$ the class of a linear hyperplane. Then $\sh(\Lambda;x)=\sh(H;x)=1$ for all $x\in X$.
\end{ex}

\begin{ex}[Smooth toric varieties, Example \ref{ex:toric}] Let $X=X(\Delta)$ be a smooth projective toric variety, and let $x_{\sigma}\in X$ be a torus-invariant point corresponding to an $n$-dimensional regular cone $\sigma\in\Delta$.
If $C\in\Mov_1(X)$, then 
$$\sh(C;x_{\sigma})=\min\bigl\{C\cdotp D_{\tau}\st \tau\mbox{ ray of }\sigma\bigr\}.$$ Here $D_{\tau}$ is the torus-invariant divisor corresponding to the ray $\tau$. A similar formula holds classically for $\sh(L;x_{\sigma})$ if $L$ is a nef divisor.
\end{ex}

\begin{ex}[Picard rank 1, Example \ref{ex:picard1}] Let $X$ be a smooth projective variety such that ${\rm rank}\, N^1(X)=1$. Let $H$ be an ample generator of $N^1(X)$. The curve intersection class $H^{n-1}$ generates $N_1(X)$. Seshadri constants (for curves and for divisors) determine the nef and the pseudo-effective cones of divisors for the blow-up $\pi:\bl_xX\to X$ with exceptional divisor $E$.
\begin{center}
\def\svgwidth{10cm}
\begingroup%
  \makeatletter%
  \providecommand\color[2][]{%
    \errmessage{(Inkscape) Color is used for the text in Inkscape, but the package 'color.sty' is not loaded}%
    \renewcommand\color[2][]{}%
  }%
  \providecommand\transparent[1]{%
    \errmessage{(Inkscape) Transparency is used (non-zero) for the text in Inkscape, but the package 'transparent.sty' is not loaded}%
    \renewcommand\transparent[1]{}%
  }%
  \providecommand\rotatebox[2]{#2}%
  \ifx\svgwidth\undefined%
    \setlength{\unitlength}{566.92913386bp}%
    \ifx\svgscale\undefined%
      \relax%
    \else%
      \setlength{\unitlength}{\unitlength * \real{\svgscale}}%
    \fi%
  \else%
    \setlength{\unitlength}{\svgwidth}%
  \fi%
  \global\let\svgwidth\undefined%
  \global\let\svgscale\undefined%
  \makeatother%
  \begin{picture}(1,1)%
    \put(0,0){\includegraphics[width=\unitlength,page=1]{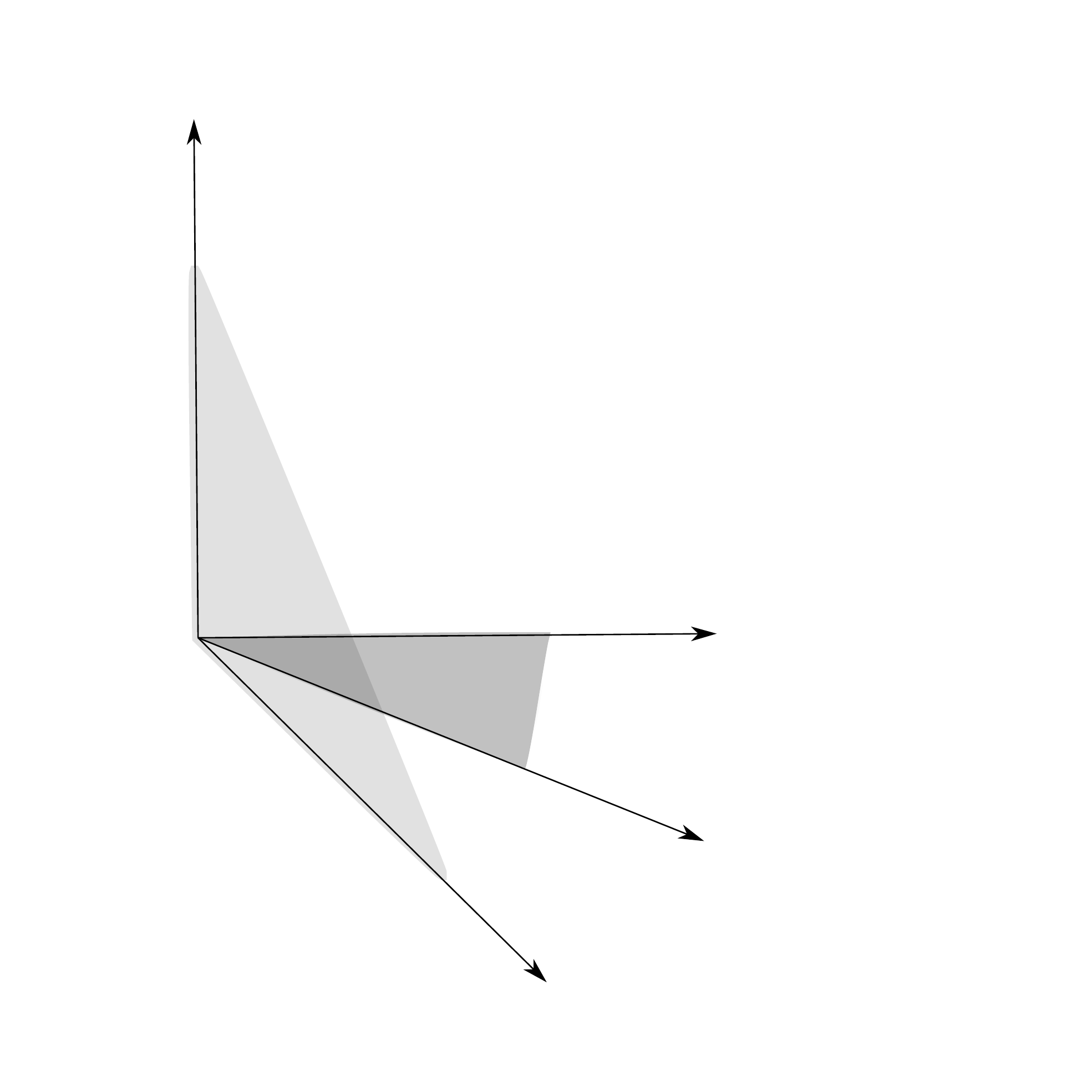}}%
    \put(0.20410715,0.85422619){\color[rgb]{0,0,0}\makebox(0,0)[lb]{\smash{$E$}}}%
    \put(0.54806545,0.51404762){\color[rgb]{0,0,0}\makebox(0,0)[lb]{\smash{}}}%
    \put(0.61232143,0.44979164){\color[rgb]{0,0,0}\makebox(0,0)[lb]{\smash{$\pi^*H$}}}%
    \put(0.61610119,0.2608036){\color[rgb]{0,0,0}\makebox(0,0)[lb]{\smash{$\pi^*H-\sh(H;x)E$ }}}%
    \put(0.51404766,0.11339294){\color[rgb]{0,0,0}\makebox(0,0)[lb]{\smash{$\pi^*H-\frac{(H^n)}{\sh(H^{n-1};x)}E$}}}%
    \put(0,0){\includegraphics[width=\unitlength,page=2]{Conepic.pdf}}%
    \put(0.24568453,0.48002975){\color[rgb]{0,0,0}\makebox(0,0)[lb]{\smash{$\Eff^1(\bl_xX)$}}}%
    \put(0.42711311,0.35151787){\color[rgb]{0,0,0}\makebox(0,0)[lb]{\smash{$\Nef^1(\bl_xX)$}}}%
  \end{picture}%
\endgroup%

\end{center}

\noindent Recall that the \emph{pseudo-effective} cone of divisors $\Eff^1(X)$ is the closure in $N^1(X)$ of the convex cone generated by classes of effective Cartier divisors.\qed
\end{ex}

A particular case of this brings to light a first difference between curves and divisors.

\begin{ex}[Grassmann varieties, Example \ref{ex:grass}] Let $X=G(k;n)$ be the Grassmann variety of $k$-dimensional subspaces of $\bb C^n$. Let $\Lambda\subset X$ be a line generating $N_1(X)$. Let $H$ be a hyperplane section of $X$ in its Pl\" ucker embedding. Then for all $x\in X$ we have $\sh(H;x)=1$ (\cite[Example 5.1.7]{laz04}), but $$\sh(\Lambda;x)=\frac 1{\min\{k,n-k\}}.$$
\end{ex}

\begin{rmk}There are a couple of reasons why this may be surprising.
\begin{itemize}
\item If $L$ is a very ample or just ample and globally generated divisor on a projective variety, then $\sh(L;x)\geq 1$ for all $x\in X$ (\cite[Example 5.1.18]{laz04}). While $\Lambda\in G(k;n)$ satisfies any reasonable analogue of ampleness and most immediate analogues of global generation, the corresponding result does not hold.
\item As $k$ grows, we see that $\sup_{x\in X}\sh(C;x)$ can be arbitrarily small. For big and nef divisors on projective varieties over $\bb C$ it is conjectured that $\sh(L;x)\geq 1$ for very general $x\in X$. 
\end{itemize}
\end{rmk}

\begin{ex}[Genus 3 curve in its Jacobian, Example \ref{ex:g3r1}] Let $C$ be a curve of genus 3.
Let $X=J(C)$ be its Jacobian, and assume that ${\rm rank}\, N^1(X)=1$.
This holds for very general (non-hyperelliptic) curves, and for curves that are very general among the hyperelliptic ones. 
Let $C\subset X$ be an Abel--Jacobi inclusion. For all $x\in X$, we show that
$$\sh(C;x)=\frac 32.$$
For comparison, by a computation of \cite{kong},
$$\sh(\theta;x)=\left\{\begin{array}{cl}\frac{12}7 &\mbox{ in the non-hyperelliptic case}\\ \frac 32 &\mbox{ in the hyperelliptic case}\end{array}\right.,$$
where $\theta$ is a principal polarization.
\end{ex}

\begin{remark}In genus 2, under the same assumption that the Jacobian has Picard rank 1, \cite{stef98} proves that $\sh(C;x)=\sh(\theta;x)=\frac 43$.
\end{remark}

\subsection{The main results}
\begin{thrma}[``Ampleness'' criterion]\label{Seshadricriterion}Let $X$ be a projective variety over an algebraically closed field, 
and let $C\in\Mov_1(X)$.
Then $C$ is in the strict interior of the movable cone if and only if $\inf_{x\in X}\sh(C;x)>0$.
\end{thrma}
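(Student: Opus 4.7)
The plan is to handle the two implications of the equivalence separately, with the ``$\Leftarrow$'' direction being the substantive one.

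For the forward implication $C\in\Mov_1(X)^\circ\Rightarrow\inf_{x\in X}\sh(C;x)>0$, I fix a very ample divisor $H$ on $X$ and first verify $\sh(H^{n-1};x)\geq 1$ uniformly in $x$, by a Bertini argument. A generic complete intersection of $n-1$ members of $|H|$ through $x$ is an irreducible curve $C_0$ representing $H^{n-1}$, and for any effective Cartier $L$ through $x$ one can arrange $C_0\not\subset L$; the local multiplicity inequality then yields $H^{n-1}\cdot L=C_0\cdot L\geq\mult_xC_0\cdot\mult_xL\geq\mult_xL$. Since $C$ is interior, there is $\delta>0$ with $C-\delta H^{n-1}\in\Mov_1(X)$, and the superadditivity $\sh(C_1+C_2;x)\geq\sh(C_1;x)+\sh(C_2;x)$ on the movable cone (immediate from the infimum definition) gives $\sh(C;x)\geq\delta\sh(H^{n-1};x)+\sh(C-\delta H^{n-1};x)\geq\delta$, uniformly in $x$.

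For the converse $\inf_{x\in X}\sh(C;x)>0\Rightarrow C\in\Mov_1(X)^\circ$, I argue by contradiction. The interior of $\Mov_1(X)=\Eff^1(X)^\vee$ consists of those $C$ with $C\cdot D>0$ for every nonzero $D\in\Eff^1(X)$. The hypothesis $\sh(C;x)\geq\epsilon>0$ immediately gives $C\cdot L\geq\epsilon\mult_xL\geq\epsilon$ for every nonzero effective Cartier divisor $L$, and by $\mathbb{R}$-linearity this extends to strict positivity against every nonzero effective $\mathbb{R}$-divisor. Suppose, toward a contradiction, that $C\cdot D=0$ for some $D\in\Eff^1(X)\setminus\{0\}$. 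Applying Nakayama's divisorial $\sigma$-decomposition $D=P_\sigma(D)+N_\sigma(D)$, with $N_\sigma(D)=\sum a_iE_i$ an effective $\mathbb{R}$-divisor: if $N_\sigma(D)\neq 0$, then $C\cdot N_\sigma(D)\geq\epsilon\sum a_i>0$ together with $C\cdot P_\sigma(D)\geq 0$ (from pseudo-effectivity of $P_\sigma(D)$) already contradicts $C\cdot D=0$; so necessarily $D=P_\sigma(D)$ lies in the closure of the movable divisor cone on $\partial\Eff^1(X)$.

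The main obstacle is this residual case of a purely movable pseudo-effective boundary class. My plan here is to approximate $D$ by effective $\mathbb{Q}$-Cartier divisors $L_n/k_n$ with $L_n$ integral and $k_n\in\mathbb{Z}_{>0}$, normalized on the fundamental compact slice $\{D'\in\Eff^1(X):H^{n-1}\cdot D'=1\}$. Using the universal bound $\mult_xL_n\leq H^{n-1}\cdot L_n$ from the forward direction to control how the multiplicities scale, and selecting $x_n\in L_n$ maximizing $\mult_{x_n}L_n$, one extracts a numerical limit of the rescaled classes to a nonzero $L^\infty\in\Eff^1(X)$ with $C\cdot L^\infty=0$, while forcing $\sh(C;x_n)\leq C\cdot L_n/\mult_{x_n}L_n\to 0$, the desired contradiction. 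The delicate point is to ensure the maximal multiplicities grow linearly with $k_n$ along the chosen sequence, which likely requires combining asymptotic multiplicity estimates for pseudo-effective classes with geometry specific to the boundary of $\Eff^1(X)$.
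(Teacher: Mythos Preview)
Your forward implication and the reduction via the $\sigma$-decomposition are fine and match the paper's strategy. The gap is exactly where you flag it: the case $D=P_\sigma(D)$ movable with $C\cdot D=0$. Your plan---approximate $D$ by effective $L_n/k_n$, pick $x_n$ maximizing $\mult_{x_n}L_n$, and hope the maximal multiplicity grows linearly in $k_n$---is not an argument, and you say so yourself. Nothing in the movable hypothesis forces such growth; indeed $\sigma_x(D)=0$ for every $x$ when $D$ is movable, so the \emph{minimal} multiplicity in $|mD+A|$ is $o(m)$, and there is no obvious mechanism producing a sequence of points where the multiplicity is linear.

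The paper closes this gap with a different idea: rather than searching for special points, it fixes a single smooth point $x$ and invokes Nakayama's growth theorem \cite[V.1.11]{nak} (or its positive-characteristic analogue), which says that for $P_\sigma(D)\not\equiv 0$ movable there is an ample $A$ with $h^0(X,\lfloor mD\rfloor+A)\geq\beta m$. Since vanishing at $x$ to order $>B$ imposes only $\binom{B+n}{n}$ conditions, for $m\gg 0$ one finds $D_m\in|\lfloor mD\rfloor+A|$ with $\mult_xD_m>B$. But $C\cdot D_m\leq C\cdot(mD+A)=C\cdot A$ is bounded independently of $m$, so $\varepsilon\leq\sh(C;x)\leq(C\cdot A)/\mult_xD_m$ forces $\mult_xD_m$ bounded---a contradiction. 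This is the missing ingredient: a cohomological lower bound that manufactures high multiplicity at a \emph{fixed} point, rather than a compactness argument over varying points. (For singular $X$ the paper pulls back to a resolution or alteration before running Nakayama; your use of the $\sigma$-decomposition directly on $X$ would need the same adjustment.)
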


The motivation comes from a result of Seshadri (\cite[Chapter 10]{Har70} or \cite[Theorem 1.4.13]{laz04}). It states that a nef divisor $L$ is ample (that is in the strict interior of the nef cone) if and only if $\inf_{x\in X}\sh(L,x)>0$.
By \cite{bdpp13}, $\Mov_1(X)$ is the dual of the pseudo-effective cone of divisors $\Eff^1(X)\subset N^1(X)$. 
Thus we may see the movable cone of curves as the ``nef'' cone of curves. 

The proof of Theorem A relies on an analysis of Zariski decompositions for divisors, and on multiplicity estimates for divisors in large linear series.
By comparison, the proof of the classical Seshadri criterion for divisors is inductive, relying on the Nakai--Moishezon criterion of ampleness, so it does not readily extend to curves. 

\begin{thrmb}[A characterization of the Null locus]
\label{thm:nulllocus}Let $X$ be \underline{\emph{smooth}} projective over an algebraically closed field. 
Let $C\in\Mov_1(X)$ and assume there exists $x_0\in X$ such that $\sh(C;x_0)>0$. 
Then there exist at most finitely many irreducible divisors $L_1,\ldots,L_r$ on $X$ such that $C\cdotp L_i=0$ for $1\leq i\leq r$.
Furthermore, the ``null locus'' ${\rm Null}\, (C):=L_1\cup\ldots\cup L_r$ coincides with the set
$\bigl\{x\in X\st \sh(C;x)=0\bigr\}$. In particular, the latter is Zariski closed in $X$.
\end{thrmb}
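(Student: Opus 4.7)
The plan is to characterize $\sh(C;x)=0$ in terms of Nakayama's divisorial Zariski decomposition on $X$, then to exploit the hypothesis through a key rigidity lemma. The easy inclusion $\text{Null}(C)\subseteq\{x\in X : \sh(C;x)=0\}$ follows directly from Definition \ref{defn:seshdef}: for $x\in L_i$ we have $\sh(C;x)\leq C\cdot L_i/\mult_xL_i=0$ since $\mult_xL_i\geq 1$ and $C\cdot L_i=0$.

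For the reverse inclusion I first translate $\sh(C;x)=0$ into a statement on $X$. Combining Proposition \ref{prop:seshbypullcurve} with the duality $\Mov_1(\bl_xX)=\Eff^1(\bl_xX)^\vee$, the condition is equivalent to the existence of $\tilde D\in\Eff^1(\bl_xX)$ with $\pi^*C\cdot\tilde D=0$ and $\ell\cdot\tilde D>0$. Writing $\tilde D=\pi^*D-bE$ (forced by $\ell\cdot E=-1$) with $b>0$, and using the projection formula $\pi^*C\cdot E=0$, the condition becomes: there exists $D\in\Eff^1(X)$ with $C\cdot D=0$ and positive Nakayama multiplicity $\sigma_E(\pi^*D)\geq b>0$. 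The hypothesis enters through the key lemma $\Mov^1(X)\cap C^\perp=\{0\}$, where $\Mov^1(X)$ denotes the closed movable cone of divisor classes; equivalently, by BDPP, $C$ lies in the interior of the pseudo-effective cone of curves. I prove the key lemma by contradiction: given $P\in\Mov^1(X)\setminus\{0\}$ with $C\cdot P=0$, the class $P+\epsilon A$ for ample $A$ is big and movable, so asymptotic constructions produce effective integer representatives of its high multiples through $x_0$ with multiplicity growing linearly in the degree, forcing the ratios $C\cdot D'/\mult_{x_0}D'$ to tend to $0$ as $\epsilon\to 0^+$, contradicting $\sh(C;x_0)>0$.

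Granted the key lemma, Nakayama's decomposition $D=P_\sigma(D)+N_\sigma(D)$ of any $D\in\Eff^1(X)$ with $C\cdot D=0$ has $P_\sigma(D)\in\Mov^1(X)\cap C^\perp=\{0\}$, so $D=\sum_j a_jL_j$ is a finite positive combination of primes $L_j$ with $C\cdot L_j=0$; the condition $\sigma_E(\pi^*D)>0$ then forces $\mult_xD>0$, whence $x$ lies on some $L_j$, giving the reverse inclusion. For finiteness of $\mathcal L:=\{L\text{ prime on }X : C\cdot L=0\}$, assume for contradiction that $L_1,L_2,\ldots\in\mathcal L$ are distinct; fixing an ample $A$ and normalizing $\beta_n:=[L_n]/(A^{n-1}\cdot L_n)\in\Eff^1(X)$, a convergent subsequence $\beta_n\to\beta\in\Eff^1(X)$ satisfies $A^{n-1}\cdot\beta=1$ and $C\cdot\beta=0$, so by the structural step $\beta=\sum_{k=1}^r c_k[L_k']$ with $L_k'\in\mathcal L$ and $c_k=\sigma_{L_k'}(\beta)>0$. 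Since the $L_n$ are distinct, eventually $L_n\neq L_k'$ for each fixed $k$, forcing $\sigma_{L_k'}([L_n])\leq\mult_{L_k'}L_n=0$ and hence $\sigma_{L_k'}(\beta_n)=0$; lower semicontinuity of $\sigma_{L_k'}$ then yields $c_k\leq\liminf_n\sigma_{L_k'}(\beta_n)=0$, the desired contradiction. The most delicate step will be the key lemma, whose technical heart is to uniformly control the multiplicities of the auxiliary representatives as $\epsilon\to 0^+$.
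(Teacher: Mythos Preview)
Your key lemma (that $\Mov^1(X)\cap C^{\perp}=\{0\}$ under the hypothesis $\sh(C;x_0)>0$) and your finiteness argument are both correct; the latter is a pleasant alternative to the paper's route via Nakayama's linear independence of the components of $N_\sigma$.

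The reverse inclusion $\{x:\sh(C;x)=0\}\subseteq\Null(C)$ has a genuine gap, however, and in two places. First, the translation via Proposition~\ref{prop:seshbypullcurve} and duality does not give you a class $\tilde D\in\Eff^1(\bl_xX)$ with $\pi^*C\cdot\tilde D=0$ and \emph{strictly} $\ell\cdot\tilde D>0$. A limiting argument only yields $\ell\cdot\tilde D\geq 0$; when $C$ lies on the boundary of $\Mov_1(X)$ (which the hypothesis $\sh(C;x_0)>0$ allows), the limit may well be $\tilde D=\pi^*D$ with $D\in C^{\perp}\cap\Eff^1(X)\setminus\{0\}$, and this carries no information about $x$. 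Second, and more seriously, the step ``$\pi^*D-bE\in\Eff^1(\bl_xX)$ with $b>0$ implies $\sigma_E(\pi^*D)\geq b$'' is simply false: for $D$ ample and $b>0$ small, $\pi^*D-bE$ is ample while $\sigma_E(\pi^*D)=0$. You are confusing the Fujita--Nakayama invariant $\fuj(D;x)=\sup\{t:\pi^*D-tE\in\Eff^1\}$ with the Nakayama asymptotic multiplicity $\sigma_E(\pi^*D)$; only the former is what pseudoeffectivity of $\pi^*D-bE$ bounds from below.

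The paper avoids both issues by working directly on $X$ with a sequence: take irreducible $D_j$ through $x$ with $\frac{C\cdot D_j}{\mult_xD_j}\to 0$, normalize so that $H^{n-1}\cdot D_j=1$ (hence $\mult_xD_j\leq 1$ by B\'ezout), extract a limit $[D]\in\Eff^1(X)$ with $C\cdot D=0$ and $[D]\neq 0$. Your key lemma gives $P_\sigma(D)=0$, so some prime $E$ has $\sigma_E(D)>0$; then lower semicontinuity of $\sigma_E$ forces $\sigma_E(D_j)>0$ for large $j$, whence $\Supp D_j=E$ (irreducibility), so $x\in E\subseteq\Null(C)$. This is precisely the semicontinuity trick you already use in your finiteness argument, so you have the right tool in hand---you just need to apply it here instead of the blowup duality.
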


\noindent Manifestly $\sh(C;x)=0$ whenever $x$ belongs to the support of some effective divisor $L$ with $C\cdotp L=0$. The content of the theorem is the reverse inclusion $\bigl\{x\in X\st \sh(C;x)=0\bigr\}\subseteq\Null(C)$.
The motivation comes from \cite[Corollary 5.6 and Remark 6.5]{ELMNP}. 
There the authors prove that if $L$ is a big and nef divisor, 
then the locus $\bigl\{x\in X\st \sh(L;x)=0\bigr\}$
coincides with the non-ample locus $B_+(L)$, also known as the augmented base locus, 
and with the union
$$\bigcup_{{\small\begin{array}{c}V\subset X\mbox{ closed subvariety, }\dim V>0\\ L|_V\mbox{ not big }(\Leftrightarrow L^{\dim V}\cdotp V=0)\end{array}}}V.$$
This is a good analogue of Theorem B, since for curves the only nontrivial restrictions to consider are to divisors.

\begin{cor}[Example \ref{ex:POL}]If $X$ is a smooth complex projective variety and $C\subset X$ is a smooth curve with ample normal bundle, then $[C]$ is big. 
\end{cor}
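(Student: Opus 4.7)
The plan is to invoke Theorems A and B. To do so I verify: (i) $[C] \in \Mov_1(X)$; (ii) $C \cdot L > 0$ for every irreducible effective divisor $L \subset X$; and (iii) $\sh(C; x_0) > 0$ for some $x_0 \in X$. Given these three facts, Theorem B identifies $\Null(C)$ as a finite union of irreducible divisors $L_i$ with $C \cdot L_i = 0$, but by (ii) no such $L_i$ exists, so $\Null(C) = \emptyset$, i.e., $\sh(C; x) > 0$ for every $x$. Theorem A, combined with the compactness of $X$, upgrades this pointwise positivity to $\inf_x \sh(C; x) > 0$, placing $[C]$ in the strict interior of $\Mov_1(X)$ and completing the proof of bigness.

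For (i) I invoke Hartshorne's classical theorem that smooth subvarieties of a smooth projective variety with ample normal bundle have unobstructed deformations that sweep out the ambient space, so $[C]$ is movable. For (ii) I argue by cases. If $C \not\subset L$, Hartshorne's intersection theorem (ample normal bundle combined with $\dim C + \dim L = \dim X$) forces $C \cap L \neq \emptyset$, so $C \cdot L > 0$. If $C \subset L$, the conormal exact sequence
\[
0 \longrightarrow N_{C/L} \longrightarrow N_{C/X} \longrightarrow N_{L/X}|_C \longrightarrow 0
\]
realises the line bundle $N_{L/X}|_C$ as a quotient of the ample bundle $N_{C/X}$, hence as an ample line bundle on $C$, so $C \cdot L = \deg N_{L/X}|_C > 0$.

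Property (iii) is the main technical point. Take any $x_0 \in C$. After reducing the defining infimum of $\sh(C; x_0)$ to irreducible divisors $L$ through $x_0$, I consider two subcases. If $C \not\subset L$, then smoothness of $C$ at $x_0$ gives $C \cdot L \geq i_{x_0}(C, L) \geq \mult_{x_0}(L)$, so the ratio is at least $1$. If $C \subset L$, blow up $x_0$ via $\pi \colon \bl_{x_0} X \to X$ and use the decomposition $\pi^* L = \tilde L + \mult_{x_0}(L) \cdot E$ together with $\tilde C \cdot E = 1$ to obtain, via the projection formula, $\tilde C \cdot \tilde L = C \cdot L - \mult_{x_0}(L)$. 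Since $\tilde C \subset \tilde L$, this intersection equals $\deg(N_{\tilde L / \bl_{x_0} X}|_{\tilde C})$, and it is a quotient of the bundle $N_{\tilde C/\bl_{x_0} X} \cong N_{C/X}(-x_0)$. The principal obstacle is thus the positivity of this twist: if $N_{C/X}$ is positive enough that $\mu_{\min}(N_{C/X}) \geq 1$ (which follows from standard slope bounds for ample bundles on curves in sufficient generality), then $N_{C/X}(-x_0)$ is nef, all its quotients have non-negative degree, and one obtains $C \cdot L \geq \mult_{x_0}(L)$; hence $\sh(C; x_0) \geq 1$. When the slope hypothesis is not immediate, the alternative route is Proposition 1.3: construct a covering family on $\bl_{x_0} X$ from the proper transforms of deformations of $C$ through $x_0$, whose numerical class is $\pi^*[C] - [\ell]$, the existence of which is guaranteed by deformation-theoretic consequences of ampleness of $N_{C/X}$.
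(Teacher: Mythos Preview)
Your overall strategy is to place $[C]$ in the strict interior of $\Mov_1(X)$ and deduce bigness from that. This target is too strong and is in fact false in general. Take any smooth $X_0$ containing a smooth curve $C$ with ample normal bundle, blow up a point $p\in X_0\setminus C$, and call the result $X$. The curve $C$ sits unchanged in $X$ with the same (ample) normal bundle, yet the exceptional divisor $E$ is disjoint from $C$, so $C\cdot E=0$. Hence $[C]$ lies on the boundary of $\Mov_1(X)$, and by Theorem~B we even have $\sh(C;x)=0$ for every $x\in E$. Bigness of $[C]$ still holds here, but it cannot be obtained through Theorem~A.

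This same example kills your step (ii): the claim that $C\cdot L>0$ for every irreducible divisor $L$ is simply false. The appeal to ``Hartshorne's intersection theorem'' is misplaced --- that result requires hypotheses on \emph{both} subvarieties, and an arbitrary irreducible divisor $L$ need not meet $C$ at all. The ``compactness'' step is also unjustified: $x\mapsto\sh(C;x)$ is not known to be lower semicontinuous, so pointwise positivity does not yield a positive infimum. Finally, your argument for (iii) in the case $C\subset L$ relies on $\mu_{\min}(N_{C/X})\geq 1$, which does not follow from ampleness (a stable ample bundle of rank $r$ and degree~$1$ on a curve has $\mu_{\min}=1/r$), and the fallback via Proposition~1.3 is only gestured at.

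The paper's route is genuinely different: it blows up $X$ along the curve $C$ rather than at a point. On $\bl_CX$ the exceptional divisor is $E\simeq\bb P_C(N^{\vee})$, and one shows that for small $\epsilon>0$ the class $\pi^*C-\epsilon\ell$ is movable and has positive Seshadri constant at every $y\in E$. The Seshadri bound reduces to checking that a certain explicit class on $\bb P_C(N^{\vee})$ lies in the strict interior of $\Mov_1(\bb P_C(N^{\vee}))$, and it is exactly here that ampleness of $N$ enters cleanly. Then the full Null-locus theorem (Theorem~5.1(ii)) gives that $\pi^*C-\epsilon\ell$ is big, and pushing forward yields bigness of $[C]$. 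Note that only \emph{one} point with positive Seshadri constant is needed for bigness; the paper never claims $[C]$ is in the interior of $\Mov_1(X)$.
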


This was a question of Peternell, also answered positively by \cite{Ottem16,Lau17}.

\begin{thrmc}[``Jet separation'']Let $X$ be a projective variety over an algebraically closed field, and let $x$ be a \underline{\emph{smooth}} point on it. 
Let $[C]$ be an $\bb R$-class in the strict interior of $\Mov_1(X)$. Then $\sh(C;x)$ is the supremum of all $s\geq 0$ such 
that for all effective $\bb R$-Weil $\bb R$-divisors $L$ with $x\in\Supp L$
there exists an effective $\bb R$-1-cycle $C'\equiv C$ such that $\Supp C'$ and $\Supp L$ meet properly and $\mult_xC'\geq s$. The same statement holds with $\bb Q$ replacing $\bb R$ throughout.
\end{thrmc}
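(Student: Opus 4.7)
Denote by $\sigma(C;x)$ the supremum on the right-hand side. The plan is to prove both inequalities $\sigma(C;x)\leq\sh(C;x)$ and $\sh(C;x)\leq\sigma(C;x)$. The first is essentially a local positivity computation at the smooth point $x$; the second recasts $\sh(C;x)$ as a movability condition on $\bl_xX$ via Proposition \ref{prop:seshbypullcurve} and then realizes the relevant movable class by an actual effective cycle using Bertini.

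For $\sigma(C;x)\leq\sh(C;x)$, take $s<\sigma(C;x)$ and any effective Cartier divisor $L$ through $x$. The hypothesis yields an effective $\mathbb R$-$1$-cycle $C'\equiv C$ meeting $\Supp L$ properly with $\mult_xC'\geq s$. Since $x$ is smooth, the local intersection multiplicity at $x$ is at least the product of the two multiplicities, so
\[
C\cdotp L \;=\; C'\cdotp L \;\geq\; \mult_xC'\cdotp\mult_xL \;\geq\; s\cdotp\mult_xL.
\]
Taking the infimum over such $L$ gives $\sh(C;x)\geq s$, hence $\sh(C;x)\geq\sigma(C;x)$.

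For the reverse, fix $s<\sh(C;x)$ and any effective $\mathbb R$-Weil $\mathbb R$-divisor $L$ with $x\in\Supp L$. The goal is to construct $C'$ as the pushforward under $\pi$ of an effective cycle on $\bl_xX$ in the class $\pi^*C-s\ell$. By Proposition \ref{prop:seshbypullcurve}, $\pi^*C-s\ell\in\Mov_1(\bl_xX)$. The key first step is to upgrade this to interior membership: since $\sh(\,\cdot\,;x)$ is continuous on the interior of $\Mov_1(X)$ and the linear map $(C_0,s')\mapsto\pi^*C_0-s'\ell$ from $N_1(X)\oplus\mathbb R$ to $N_1(\bl_xX)$ is an isomorphism, the relation $s'<\sh(C_0;x)$ for $(C_0,s')$ in a neighborhood of $(C,s)$ places $\pi^*C-s\ell$ in the interior of $\Mov_1(\bl_xX)$. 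The strongly movable subcone is convex with closure $\Mov_1$, so their interiors coincide; hence $\pi^*C-s\ell$ is strongly movable, and one may write $\pi^*C-s\ell=\mu_*(A_1\cdotp\ldots\cdotp A_{n-1})$ for some birational $\mu:Y\to\bl_xX$ and ample $\mathbb R$-divisors $A_i$ on $Y$.

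I would then apply an $\mathbb R$-Bertini argument: decompose each $A_i$ as a positive $\mathbb R$-combination of integer very ample classes and take sufficiently general representatives, producing an effective $\mathbb R$-$1$-cycle on $Y$ whose pushforward $D:=\mu_*(A_1\cdotp\ldots\cdotp A_{n-1})$ on $\bl_xX$ is effective, has class $\pi^*C-s\ell$, and has no component contained in $E$ or in the strict transform of $\Supp L$. The cycle $C':=\pi_*D$ on $X$ then satisfies $C'\equiv\pi_*(\pi^*C-s\ell)=C$ since $\pi_*\ell=0$; its multiplicity at $x$ equals $\mult_x\pi_*D=D\cdotp E=(\pi^*C-s\ell)\cdotp E=s$ via the projection formula (using $\pi_*E=0$ and $\ell\cdotp E=-1$, together with the identity $\mult_x\pi_*D=D\cdotp E$ for effective cycles with no $E$-component); and $\Supp C'$ meets $\Supp L$ properly since no component of $D$ lies in $\pi^{-1}(\Supp L)$, which is the union of $E$ and the strict transform of $\Supp L$. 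The $\mathbb Q$-version is handled identically with $\mathbb Q$-coefficients throughout. The main obstacle in this direction is the interior/strong-movability step, which is what promotes $\pi^*C-s\ell$ from a mere limit of effective cycles (the BDPP characterization of $\Mov_1$) to a class actually represented by an effective cycle, so that Bertini produces a precise cycle with the required numerical class rather than only an approximation.
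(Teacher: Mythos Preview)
Your proof is correct and follows essentially the same route as the paper: the easy inequality via local intersection multiplicities at the smooth point, and for the reverse, showing $\pi^*C - s\ell$ lies in the \emph{interior} of $\Mov_1(\bl_xX)$ and then representing it by an actual effective cycle via \cite{bdpp13} and Bertini, whose pushforward is the desired $C'$. The one substantive difference is in how interior membership is established: the paper proves it directly as Lemma~\ref{lem:strictpospull} (by contradiction, analyzing a supporting pseudo-effective divisor on the blow-up and showing it must push forward to zero), whereas you deduce it from continuity of $\sh(\cdot;x)$ on the interior of $\Mov_1(X)$ together with the isomorphism $N_1(\bl_xX)\simeq \pi^*N_1(X)\oplus\bb R\ell$; this works once you note that $s>0$ may be assumed (the case $s\le 0$ is trivial, and your open neighborhood needs $s'>0$ since $(\pi^*C_0+|s'|\ell)\cdot E=-|s'|<0$ shows $\pi^*C_0-s'\ell\notin\Mov_1(\bl_xX)$ for $s'<0$). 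One minor imprecision: BDPP gives an $\bb R_+$-combination of complete intersections pushed from possibly several birational models, not necessarily a single $\mu_*(A_1\cdots A_{n-1})$, but your Bertini argument goes through unchanged for a finite sum.
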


In a sense, the condition in the theorem is that the union of the supports of effective $\bb R$-1-cycles of class $[C]$ with multiplicity at least $s$ at $x$ is dense in $X$.
We also want that the components of these supports sit in general position relative to any divisor, which is why the formulation in the theorem was preferred. 

It is classical that with notation as in the theorem we have $C'\cdotp L\geq\mult_xC'\cdot\mult_xL\geq s\cdot\mult_xL$, leading to $\sh(C;x)\geq s$. The deeper content is that the inequality becomes an equality when considering the supremum of such $s$. 
The motivation came from the alternate interpretation of Seshadri constants of divisors as an asymptotic measure of jet separation. 
Recall that a Cartier divisor $L$ is said to separate $s$-jets at a smooth point $x\in X$ if 
the natural map
$$H^0\bigl(X,\cal O_X(L)\bigr)\to H^0\bigl(X,\cal O_X(L)\otimes\factor{\cal O_X}{\frak m_{x}^{s+1}}\bigr)$$
is surjective. We denoted by $\frak m_{x}$ the ideal sheaf of $x\in X$. 
If $L$ separates $s$-jets at $x$, then the linear series $|L|$ verifies the following incidence condition:
for every irreducible curve $C$ through $x$
there exists a member $L'\in|L|$ such that $\mult_xL'\geq s$ and $L'$ meets $C$ properly.
If $L$ is nef, this implies that $\sh(L;x)\geq s$. When $L$ is ample, \cite[Theorem 6.4]{dem} and \cite[Theorem 5.1.17]{laz04} prove that
this inequality becomes an equality asymptotically. Denote by $s(L;x)$ the largest $s$ such 
that $L$ separates $s$-jets at $x$. Then $\sh(L;x)=\lim_{k\to\infty}\frac{s(kL;x)}{k}$ if $L$ is ample. 

In Theorem \ref{thrm:asymptoticjetseparation} we also prove an asymptotic version of Theorem C giving control on the coefficients of $C'$. 

\subsection{Bounds on Seshadri constants}

For divisors, finding global lower bounds on Seshadri constants for ample divisors is an important problem.
We refer to \cite{ekl95,dem} and \cite[Chapter 5]{laz04} for the history of this question
and its relation to the famous Fujita conjecture.

In \cite{el93} (see also \cite[Proposition 5.2.3]{laz04}) it is proved that if $X$ is a smooth projective surface, and $L$ an ample divisor on $X$, then $\sh(L;x)\geq 1$
except for possibly countably many points $x\in X$. A generalization for divisors appears in \cite[Theorem 1]{ekl95}. There it is shown that if $X$ is a smooth projective variety of dimension $n$ and $L$ is big and nef, then $\sh(L;x)\geq\frac 1n$ for very general $x\in X$. It is conjectured that $\sh(L;x)\geq 1$ for very general $x$.

Using the techniques of \cite{el93}, we give lower bounds for complete intersection curves.

\begin{prop}Let $X$ be a projective variety of dimension $n$ over $\bb C$. Let $H$ be an ample
Cartier $\bb Z$-divisor on $X$ such that $(H^n)\geq n^{n-2}$, or $\sh(H;x_0)\geq 1$ for some $x_0\in X$. 
Then $\sh(H^{n-1};x)\geq 1$ for very general $x\in X$.
\end{prop}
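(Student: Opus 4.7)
My plan is to adapt the techniques of \cite{el93}, developed for Seshadri constants of divisors on surfaces, to the setting of curve classes by intersecting a hypothetical ``bad'' divisor with generic auxiliary divisors in large multiples of $H$. Assume for contradiction that the set $U = \{x \in X : \sh(H^{n-1};x) < 1\}$ is not contained in a countable union of proper Zariski-closed subsets of $X$. Since $H^{n-1}\cdot L$ and $\mult_x L$ are non-negative integers for any effective Cartier divisor $L$, the strict inequality provides, for each $x\in U$, an irreducible effective Cartier divisor $L_x\ni x$ with $\mult_x L_x \geq H^{n-1}\cdot L_x + 1$. Such $L_x$ live in countably many components of the Hilbert scheme of $X$, so by pigeonhole one component carries an irreducible family $\{L_t\}_{t\in T}$ with distinguished section $\sigma(t)=x_t\in L_t$ such that $t\mapsto x_t$ is dominant; after restricting to a dense open I may assume $\mult_{x_t}L_t = m$ and $H^{n-1}\cdot L_t = e$ are constant integers with $m\geq e+1$.

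The contradiction will come by intersecting a typical $L_t$ with $n-1$ test divisors in $|kH|$ chosen to have high multiplicity at $x_t$. Concretely, for $k$ large I want to produce $D_1,\ldots,D_{n-1}\in|kH|$ through $x_t$ with $\mult_{x_t}D_i \geq s$, where $s$ grows linearly in $k$, and such that $L_t, D_1,\ldots,D_{n-1}$ intersect properly at $x_t$. Since $x_t$ lies in the smooth locus of $X$ for very general $t$, Serre's local multiplicity inequality for proper intersections on a regular local ring yields
\[
k^{n-1} e \;=\; L_t\cdot D_1\cdots D_{n-1} \;\geq\; i_{x_t}(L_t;D_1,\ldots,D_{n-1}) \;\geq\; m\cdot s^{n-1},
\]
so $e \geq m\,(s/k)^{n-1}$. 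Letting $k\to\infty$ with $(s/k)^{n-1}\to c\geq 1$ then forces $e\geq m$, contradicting $m\geq e+1$.

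The existence of the required $D_i$'s is where the two alternative hypotheses enter. Under $\sh(H;x_0)\geq 1$: by the general fact that $\sh(H;x)$ coincides with its generic (maximum) value off a countable union of proper subvarieties (see \cite[Example 5.1.11]{laz04}), one has $\sh(H;x)\geq \sh(H;x_0)\geq 1$ for very general $x$, and the classical asymptotic coincidence of Seshadri constants with jet separation rates for ample divisors (\cite[Theorem 5.1.17]{laz04}) supplies $D_i\in|kH|$ with $\mult_{x_t}D_i \geq (1-\varepsilon)k$ for every $\varepsilon>0$ and $k\gg 0$, giving $s/k\to 1$. Under $(H^n)\geq n^{n-2}$: asymptotic Riemann--Roch for the ample $H$ yields $h^0(kH)=k^n(H^n)/n!+O(k^{n-1})$, while vanishing to order $\geq s$ at a smooth point imposes at most $\binom{s+n-1}{n}$ linear conditions on $H^0(kH)$, so the subsystem $V_s(kH,x_t)$ of such sections has dimension at least $h^0(kH)-\binom{s+n-1}{n}$, which remains positive for $s$ up to order $k(H^n)^{1/n}$. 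The threshold $n^{n-2}$ is calibrated so that, after accounting for the $O(k^{n-1})$ error in Riemann--Roch and the genericity needed to ensure $n-1$ members of $V_s$ produce tangent cones at $x_t$ meeting the tangent cone of $L_t$ in the expected codimension, the asymptotic ratio $(s/k)^{n-1}$ reaches $1$.

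The main technical obstacle is this last clause under hypothesis $(H^n)\geq n^{n-2}$: one must ensure that $n-1$ generic elements of $V_s(kH,x_t)$ have tangent cones in sufficiently general position so that $L_t\cap D_1\cap\cdots\cap D_{n-1}$ is zero-dimensional at $x_t$ and the local intersection multiplicity attains the lower bound $m\,s^{n-1}$. Balancing the lower bound on $\dim V_s$ against the conditions imposed by properness of intersection, much as in the careful dimension-and-multiplicity bookkeeping of \cite{el93}, is the heart of the argument and is precisely what forces the numerical threshold $(H^n)\geq n^{n-2}$ in the absence of a reference-point Seshadri hypothesis.
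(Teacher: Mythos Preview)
Your treatment of the hypothesis $\sh(H;x_0)\geq 1$ is essentially correct: using $n-1$ members of $|kH|$ with multiplicity $s_k$ at $x_t$ (where $s_k$ is the jet-separation order), you get $e\geq m\,(s_k/k)^{n-1}\to m\,\sh(H;x_t)^{n-1}\geq m$, contradicting $m\geq e+1$. This is a legitimate variant of the paper's argument.

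However, your treatment of the hypothesis $(H^n)\geq n^{n-2}$ has a genuine gap, one that you yourself flag but do not close. Dimension counting via Riemann--Roch produces sections of $|kH|$ with $\mult_{x_t}\geq s$ for $s$ up to roughly $k(H^n)^{1/n}$, but gives \emph{no control} over their tangent cones at $x_t$. Without that control you cannot guarantee that $L_t,D_1,\ldots,D_{n-1}$ meet properly at $x_t$, nor that the local intersection number reaches $m\,s^{n-1}$. Controlling the tangent cone is exactly jet separation, and the jet-separation threshold is $s_k/k\to\sh(H;x_t)$, which is \emph{not} governed by $(H^n)$ alone. So your scheme of ``balancing $\dim V_s$ against properness conditions'' cannot produce the threshold $n^{n-2}$; if it worked at all it would prove the result for every ample $H$, which should already signal trouble.

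The paper's argument is organized differently and avoids this trap. Instead of $n-1$ auxiliary divisors, it uses only $n-2$ general members $D_1,\ldots,D_{n-2}\in|kH|$ with multiplicity $s_k$ (so tangent cones are free by jet separation), together with an \emph{infinitesimal deformation} $L'_t$ of $L_t$ in its own family, which still has $\mult_{x_t}L'_t\geq m-1$ and meets $L_t$ properly. This gives
\[
m(m-1)\Bigl(\frac{s_k}{k}\Bigr)^{n-2}\;\leq\;L_t^2\cdot H^{n-2}.
\]
The factor $(H^n)$ then enters not through Riemann--Roch but through the \emph{Hodge index inequality} on a complete-intersection surface:
\[
(L_t^2\cdot H^{n-2})\,(H^n)\;\leq\;(L_t\cdot H^{n-1})^2\;\leq\;(m-1)^2.
\]
Passing to the limit $s_k/k\to\sh(H;x_t)$ yields $\sh(H;x_t)^{n-2}(H^n)<1$, and both hypotheses are handled simultaneously by the single inequality $\sh(H;x_0)^{n-2}(H^n)\geq 1$ (the case $(H^n)\geq n^{n-2}$ uses the Ein--K\"uchle--Lazarsfeld bound $\sh(H;x)\geq 1/n$ at a very general point). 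The two ingredients you are missing are precisely the Ein--Lazarsfeld infinitesimal-deformation trick and the Hodge inequality; together they reduce the exponent on $\sh(H;x_t)$ from $n-1$ to $n-2$ and supply the $(H^n)$ factor without any appeal to dimension counts for high-multiplicity sections.
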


The surface case of \cite{el93} is a particular case of this.
See Proposition \ref{prop:lowerboundci} for a sharper version.

\begin{remark}The Example \ref{ex:grass} of the Grassmannian proves that there cannot exist a lower bound independent of dimension that holds for all classes with $\bb Z$-coefficients in the strict interior of $\Mov_1(X)$.
\end{remark}

Upper bounds are easier to find. 

\begin{defn}Let $X$ be a projective variety and fix $x\in X$. For any $L\in\Eff^1(X)$, consider the Fujita--Nakayama-type invariant
$$\fuj(L;x):=\sup\bigl\{t\geq 0\st \pi^*L-tE\in\Eff^1(\bl_xX)\bigr\},$$
where $\pi:\bl_xX\to X$ is the blow-up with exceptional divisor $E$.
\end{defn}

\noindent In a sense this measures the maximal multiplicity at $x$ of effective $\bb R$-divisors $\bb R$-linearly equivalent to $L$.
An important bound on $\fuj(L;x)$ is obtained by counting conditions for a function to vanish at a smooth point with prescribed multiplicity.

\begin{prop}\label{prop:lowerboundmultiplicity}If $x$ is a smooth point of $X$ and $L\in\Eff^1(X)$,
then $$\fuj(L;x)\geq\vol^{1/n}(L).$$
\end{prop}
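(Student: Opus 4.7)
The plan is to realize $\pi^\ast L-tE$ as a limit of effective $\mathbb{Q}$-classes on $\bl_xX$ for every $t<\vol^{1/n}(L)$, using the standard dimension count for sections vanishing to high order at a smooth point. First I would reduce to the case where $L$ is big: if $\vol(L)=0$ the inequality is trivial because $\pi^\ast L\in\Eff^1(\bl_xX)$ already. If $L$ is big, I would approximate $L$ by big $\mathbb{Q}$-divisors $L_i\to L$ (in $N^1(X)_{\mathbb{R}}$) with $\vol(L_i)\to\vol(L)$; by the continuity of $\fuj(\,\cdot\,;x)$ on the big cone (which is a consequence of the closedness of $\Eff^1(\bl_xX)$) and by the homogeneity of both sides, it suffices to prove the inequality for integral big Cartier divisors.

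So assume $L$ is an integral Cartier divisor with $\vol(L)>0$. Fix $t<\vol^{1/n}(L)$ rational and set $k_m:=\lfloor tm\rfloor$ for $m\gg 0$. By the definition of the volume,
\[
h^0(X,\mathcal{O}_X(mL))=\frac{\vol(L)}{n!}\,m^n+o(m^n),
\]
while the sections of $\mathcal{O}_X(mL)$ vanishing to order $\geq k_m$ at the smooth point $x$ form a subspace of codimension at most
\[
\binom{k_m+n-1}{n}=\frac{k_m^n}{n!}+O(m^{n-1})\leq\frac{t^n}{n!}m^n+O(m^{n-1}).
\]
Since $t^n<\vol(L)$, for $m$ large enough the first quantity strictly exceeds the second, so there exists a nonzero section producing an effective divisor $D_m\in|mL|$ with $\mult_xD_m\geq k_m$. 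Writing $\pi^\ast D_m=\widetilde D_m+(\mult_xD_m)E$ where $\widetilde D_m$ is the strict transform, the class
\[
\pi^\ast L-\tfrac{k_m}{m}E\;=\;\tfrac{1}{m}\widetilde D_m+\tfrac{\mult_xD_m-k_m}{m}E
\]
is $\mathbb{Q}$-effective on $\bl_xX$.

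Letting $m\to\infty$ gives $k_m/m\to t$, and since $\Eff^1(\bl_xX)$ is closed we conclude $\pi^\ast L-tE\in\Eff^1(\bl_xX)$ for every $t<\vol^{1/n}(L)$, hence $\fuj(L;x)\geq\vol^{1/n}(L)$. The only real point of friction is the approximation step reducing from $\mathbb{R}$-classes to integral Cartier divisors, since the asymptotic Riemann--Roch content of the argument is then purely routine; the dimension count itself is the classical Nakayama/Demailly trick and requires only that $x$ be smooth so that $\dim_{\mathbb{C}}\mathfrak{m}_x^k/\mathfrak{m}_x^{k+1}=\binom{k+n-1}{n-1}$.
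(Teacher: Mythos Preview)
Your argument is correct and follows essentially the same route as the paper: both rely on the standard dimension count that vanishing to order at least $e$ at a smooth point imposes $\binom{n+e-1}{n}$ conditions, compared against the asymptotic growth $h^0(X,mL)\sim\frac{\vol(L)}{n!}m^n$. The paper's proof is a two-line sketch of exactly this; you have simply filled in the reduction from $\mathbb{R}$-classes to integral big divisors and the limiting step more carefully.
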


\noindent From the easy observation that 
\begin{equation}\sh(C;x)\fuj(L;x)\leq C\cdot L\end{equation}
for all $C\in\Mov_1(X)$ and $L\in\Eff^1(X)$ we find
that

\begin{prop}\label{prop:seshadriupperbound}If $x$ is a smooth point of $X$, then
$$\sh(C;x)\leq\inf\left\{\frac{C\cdotp L}{\vol^{1/n}(L)}\st L\mbox{ is a big }\bb R\mbox{-Cartier }\bb R\mbox{-divisor}\right\}.$$
In particular $\sh(H^{n-1})\leq(H^n)^{\frac{n-1}n}$ for every ample divisor class $H$.
\end{prop}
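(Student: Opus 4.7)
The plan is to combine the two auxiliary facts stated just before the proposition: the easy product inequality $\sh(C;x)\cdotp\fuj(L;x)\leq C\cdotp L$ (valid for every $C\in\Mov_1(X)$ and $L\in\Eff^1(X)$), and Proposition \ref{prop:lowerboundmultiplicity}, which at the smooth point $x$ gives $\fuj(L;x)\geq\vol^{1/n}(L)$. This is really a one-line deduction; the substantive content lives in those two inputs.

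More concretely, let $L$ be any big $\bb R$-Cartier $\bb R$-divisor on $X$. Bigness ensures $L\in\Eff^1(X)$ and $\vol(L)>0$. Combining the two inequalities gives
$$\sh(C;x)\cdotp\vol^{1/n}(L)\leq\sh(C;x)\cdotp\fuj(L;x)\leq C\cdotp L,$$
so that $\sh(C;x)\leq\frac{C\cdotp L}{\vol^{1/n}(L)}$. Taking the infimum over all big $\bb R$-Cartier $\bb R$-divisors $L$ yields the first assertion.

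For the particular case, I would apply the first part with $C=H^{n-1}$ and $L=H$. Since $H$ is ample (hence big), $\vol(H)=(H^n)$ and $C\cdotp L=(H^n)$, so
$$\sh(H^{n-1};x)\leq\frac{(H^n)}{(H^n)^{1/n}}=(H^n)^{\frac{n-1}{n}},$$
which is the stated bound.

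There is essentially no real obstacle: once one grants the product inequality (which is immediate from the definitions of $\sh$ and $\fuj$ by approximating $\fuj(L;x)$ by the multiplicities of effective $\bb R$-divisors $\bb R$-linearly equivalent to $L$) and Proposition \ref{prop:lowerboundmultiplicity} (whose proof is a standard jet-counting argument at the smooth point $x$), the proposition is a formal consequence. The only minor care is to note that the infimum ranges over big divisors, so that $\vol^{1/n}(L)$ is strictly positive and the quotient is well defined.
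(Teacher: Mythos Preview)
Your proof is correct and follows exactly the paper's approach: the paper derives the proposition immediately from the product inequality $\sh(C;x)\cdot\fuj(L;x)\leq C\cdot L$ together with Proposition \ref{prop:lowerboundmultiplicity}, and obtains the ``in particular'' statement by specializing to $C=H^{n-1}$ and $L=H$.
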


The right hand side is $\mathfrak M^{\frac{n-1}n}(C)$ as defined by
\cite{xi15} and further studied in \cite{lxpos}. The function $\mathfrak M$ is a volume-type function on $\Mov_1(X)$ obtained by polar Legendre--Fenchel transform from the volume function on $\Eff^1(X)$. 

\subsection{Relations with the independent work of \cite{mx17}}Seshadri constants for curves have been studied independently at the same time by \cite{mx17}. They see $\sh(\cdot;x):\Mov_1(X)\to\bb R$
as the polar transform of $\fuj(\cdot;x):\Eff^1(X)\to\bb R$ in the sense that
$$\sh(C;x)=\inf\left\{\frac{C\cdotp L}{\fuj(L;x)}\st L\mbox{ is a big }\bb R\mbox{-Cartier }\bb R\mbox{-divisor}\right\}.$$
There is significant overlap between our work and theirs.
They also prove Theorem A, Theorem B, and Propositions \ref{prop:lowerboundmultiplicity} and \ref{prop:seshadriupperbound}.
Their version of Theorem B is sharper in a sense. It also identifies a movable big $\bb R$-divisor class $L_C$ (an ``$(n-1)$ divisorial root of $C$" coming from \cite[Theorem 1.8]{lxpos}) such that $\Null(C)$ agrees with the union of the divisorial components of ${\bf B}_+(L_C)$. 
The class $C$ can be reconstructed from $L_C$.   

They also consider the polar transform of $\sh(\cdot;x):\Nef^1(X)\to\bb R$ giving rise to a dual function $\fuj(\cdot;x):\Eff_1(X)\to\bb R$.
With notation as in Proposition \ref{prop:seshbypullcurve}, assuming that $x$ is a smooth point, the latter also has a geometric interpretation:
$$\fuj(C;x)=\sup\bigl\{t\geq 0\st \pi^*C-t\ell\in\Eff_1(\bl_xX)\bigr\}.$$
Implicit is the statement that $\pi^*$ preserves the pseudo-effectivity of curves when $\pi$ is the blow-up of a smooth point. This is not true for arbitrary blow-ups (even with smooth centers) because 
the pushforward of a nef divisor is usually only movable. 

\subsection{Sehsadri constants for nef dual classes} Pseudo-effective cones
$\Eff_k(X)\subset N_k(X)$ are defined for all cycle dimensions $0\leq k\leq n$. 
Dually, we have nef cones $\Nef^k(X):=\Eff_k(X)^{\vee}$ for all codimensions inside the dual numerical spaces $N^k(X):=N_k(X)^{\vee}$. For example $\Nef^1(X)$ is the usual cone of nef Cartier divisor classes by \cite{kleiman66}.
When $X$ is smooth, $\Nef^{n-1}(X)=\Mov_1(X)$ by \cite{bdpp13}. 
Working with nef classes allows us to remove the regularity conditions imposed on either $L$ or $x$ in our Definition \ref{defn:seshdef} and Proposition \ref{prop:seshbypullcurve}.

\begin{defn}Let $X$ be a projective variety and fix a possibly singular point $x\in X$. For any $\alpha\in\Nef^k(X)$ set
$$\sh(\alpha;x)=\inf\left\{\frac{\alpha\cdotp V}{\mult_xV}\ \big|\ V\mbox{ effective }k\mbox{-cycle through }x\right\}.$$
\end{defn}
\noindent It is enough to consider the irreducible subvarieties $V$ through $x$, with no regularity conditions. As for divisors, 
$$\sh(\alpha;x)=\max\bigl\{t\geq 0\st \pi^*\alpha+t(-E)^k\in\Nef^k(\bl_xX)\bigr\}.$$
Seshadri constants for nef classes share many of the formal properties we saw for nef divisors or movable curves. The function $\sh(\cdot;x):\Nef^k(X)\to\bb R$
is 1-homogeneous, nonnegative, and concave. It is positive and locally uniformly continuous on the strict interior of the cone.

For nef dual curve classes, meaning $k=n-1$, we find an analogue of Theorem A (cf. Proposition \ref{prop:seshadrinefdualcurves}) and a weaker version of Theorem B (see Proposition \ref{prop:singularnulllocus}). Note that Theorem C does not hold for arbitrary classes in the strict interior of $\Nef^k(X)$. By \cite{delv11,OttemNef}, there exist nef classes that are not pseudo-effective. 
Example \ref {seshnefvsmov} suggests that the Seshadri constants of nef dual classes are finer than those of movable curve classes.

\subsection{Organization} In section 2 we review some background on numerical groups and their duals and on the various positivity notions that we need.
In section 3 we develop the basic properties of Seshadri constants for curves.
The next three sections correspond to the proofs of Theorems A, B, and C respectively. Section 7 deals with bounds on Seshadri constants.
Lastly, we develop a theory of Seshadri constants for nef dual classes in arbitrary codimension.

\vskip.5cm
\begin{paragraph}{{\bf Acknowledgments}} 
Special thanks go to V. Lozovanu for his careful reading of a preliminary version of this work and for his suggestions. We are grateful to N. McCleerey and J. Xiao for kindly sharing their preprint.
The author also thanks A. Bertram, S. Boucksom, B. Lehmann, M. Musta\c t\u a, J.C. Ottem, Z. Patakfalvi, M. Popa, C. Raicu, J. Shin, and J. Waldron for useful discussions. 
\end{paragraph}

\section{Notation and background}

Let $X$ be a projective variety of dimension $n$ over an algebraically closed field.
The real space of Cartier divisors on $X$ modulo numerical equivalence is denoted $N^1(X)$.
Its dual space $N_1(X)$ is the real space of 1-dimensional cycles on $X$ modulo numerical equivalence.
Generalizations for arbitrary dimension are provided by \cite[Chapter 12]{fulton84}.
A $k$-cycle $Z$ is \emph{numerically trivial} if $\int_ZP=0$ for all polynomials $P$ of weight $k$
in Chern classes of possibly several vector bundles on $X$. The real space of cycles modulo
numerically trivial cycles is denoted $N_k(X)$. Its elements are called \emph{numerical cycle classes}. 
Its abstract dual $N^k(X):=N_k(X)^{\vee}$ is also
the space of Chern polynomials of weight $k$ modulo the dual equivalence relation: $P$ is
numerically trivial if $\int_ZP=0$ for all $k$-cycles $Z$. 
The elements of $N^k(X)$ are called \emph{dual cycle classes}.
We routinely denote numerical classes of cycles $V$ or Chern polynomials $P$ by $[V]$ and $[P]$ respectively.
When $X$ is also smooth,
the intersection pairing induces an isomorphism $N^k(X)\simeq N_{n-k}(X)$.
In general we have a natural linear ``cyclification" morphism $N^k(X)\overset{\cap[X]}{\longrightarrow} N_{n-k}(X):\ P\mapsto P\cap[X]$.
It is injective when $k=1$ by \cite[Example 19.3.3]{fulton84}, and surjective when $k=n-1$.
In particular, it is an isomorphism when $n=2$, irrespective of the singularities or normality of $X$.

These vector spaces contain important closed convex cones. 
The closure of the cone generated by effective Cartier divisors on $X$ is the \emph{pseudo-effective cone}
$\Eff^1(X)\subset N^1(X)$. It contains the \emph{nef cone} $\Nef^1(X)$,
the closure of the convex span of ample divisors. 
In the dual space $N_1(X)$ we find the duals of these two cones.
The \emph{Mori cone} $\Eff_1(X)$ is the closure of the cone of effective curve classes, dual to $\Nef^1(X)$ in view of \cite{kleiman66}.
The \emph{movable} cone $\Mov_1(X)$ is the closure of the convex span of irreducible curves that deform in families
that dominate $X$. This cone is dual to $\Eff^1(X)$ by \cite{bdpp13}.

\begin{rmk}The proof of the duality $\Mov_1(X)=\Eff^1(X)^{\vee}$ in \cite{bdpp13} is given for complex projective manifolds. 
\cite[Theorem 11.4.19]{laz04} extends it to projective varieties in characteristic 0.
The key ingredients are the existence of Fujita approximations and Hodge inequalities for intersections of nef divisors. 
These are extended to arbitrary characteristic by \cite{Cutkosky}, even over non-algebraically closed field.
A proof of the existence of Fujita approximations in positive characteristic over an algebraically closed field was previously found by \cite{Takagi}. Newton--Okounkov body techniques are used in \cite{LM09} for a proof valid over any algebraically closed field.
See also \cite[Theorem 2.22]{fl13z}.
\end{rmk}

In $N_k(X)$ we can analogously define the \emph{pseudo-effective cone} 
$\Eff_k(X)$ (\cite{flpos}) and the \emph{movable cone} $\Mov_k(X)$ (\cite{fl13z}).
The \emph{nef cone} $\Nef^k(X)\subset N^k(X)$ (\cite{flpos}) is the dual of $\Eff_k(X)\subseteq N_k(X)$.
We often say that a (dual) cycle is nef/movable
when its numerical class has the same property.

\vskip.25cm

Let $x\in X$ be a closed point, or more generally a $0$-dimensional subscheme, and let $\pi:\bl_xX\to X$ be the blow-up of $X$ at $x$ with exceptional divisor $E$, such that $-E$ is the divisor associated to the relative Serre $\cal O(1)$ sheaf.
By \cite[p.79]{fulton84}, for any irreducible and reduced closed subset $V\subseteq X$ with $\dim V>0$, one has
\begin{equation}\label{multform}\mult_xV=-\overline V\cdotp(-E)^{\dim V},
\end{equation} 
where $\overline V$ denotes the strict transform of $V$. Since the strict transform of a union of subvarieties (different from $\{x\}$) is the union of the strict transforms, one can extend $V\mapsto\overline V$ linearly 
to $k$-cycles with arbitrary coefficients. Then $V\mapsto\mult_xV$ is also linear.

\begin{rmk}[Multiplicity of a divisor along a subvariety]\label{rmk:mdZ}
Let $X$ be a variety, $Z\subset X$ a subvariety of codimension $c\geq 2$, not contained in ${\rm Sing}\, (X)$, and 
$D\subset X$ a Weil divisor.
Let $\pi:\bl_ZX\to X$ be the blow-up with exceptional divisor $E$. The general fiber of $\pi|_E$
is isomorphic to $\bb P^{c-1}$. Let $\ell$ be a line in one such fiber.
Denote by $\overline D$ the strict transform of $D$.
Put 
$$\mult_ZD:=\overline D\cdot\ell.$$
The intersection is well defined, and agrees with the definition from \cite[Section 4.3]{fulton84}:
$\pi_*(\overline D\cdot (-E)^{c-1})=(\mult_ZD)\cdot[Z]$. 
(The intersections and pushforwards happen inside the respective Chow groups and can be computed by restricting $\overline D$ to the Cartier divisor $E$, 
then further restricting to the fiber over general $z\in Z$, which is regularly embedded in $E$. 
Finally, the fiber is smooth, and $\ell$ is a complete curve, 
hence Cartier divisors on it have well defined degree. 
The result clearly does not depend on the choice of the general $z\in Z$.
By the projection formula, the coefficient of $[Z]$
in the pushforward is $\overline D\cdot (-E)^{c-1}\cdot\pi_{|E}^*[z]=\overline D\cdot\ell$, where $z$ is a general point on  $Z$ and $\ell$ a line in $\pi^{-1}z\simeq\bb P^{c-1}$.)

If $X$ and $Z$ are smooth, then $\pi^*D=\overline D+(\mult_ZD)\cdot E$. (Intersect with $\ell$.) The regularity conditions on $X$ and $Z$ are present to guarantee that $\pi^*D-\overline D$ is a multiple of $E$. For possibly singular $X$ and $Z$, if $D$ is Cartier, then $\pi^*D-\overline D-(\mult_ZD)\cdot E$ is an exceptional divisor supported over $(X_{\rm sing}\cap Z)\cup Z_{\rm sing}$.

Since $x\mapsto\mult_x D$ is upper-semicontinuous for the Zariski topology ($x$ is not necessarily closed), we also have $\mult_ZD=\mult_zD$ for general $z\in Z$. This is \cite[Definition 5.2.10]{laz04}.\qed
\end{rmk}

\begin{lem}\label{lem:multiplicitypush} Let $\pi:X\to Y$ be a proper generically finite morphism of varieties.
Let $x\in X$ be a closed point such that $\pi$ is finite in a neighborhood of $x$.
Let $y:=\pi(x)$. Then 
$$\mult_y\pi_*X\geq\mult_xX.$$
More generally, $\mult_y\pi_*X\geq\sum_{i}\mult_{x_i}X$, where $x_i$ ranges though the zero-dimensional non-embedded (primary) components of $\pi^{-1}y$.
\end{lem}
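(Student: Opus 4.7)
The plan is to use the Hilbert--Samuel characterization of multiplicity. The claim is local at $x$ and $y$, so I first shrink $X$ to an open neighborhood of $x$ on which $\pi$ is finite, and replace $Y$ by its image. After this reduction $\pi$ is finite, and I work with the local ring $A:=\cal O_{Y,y}$ and the semi-local ring $B:=\cal O_{X,\pi^{-1}y}$; the latter is a finite $A$-module whose maximal ideals $\frak m_i$ correspond exactly to the zero-dimensional non-embedded primary components $x_i$ of the scheme-theoretic fiber $\pi^{-1}y$ on this neighborhood. Denote by $\frak n\subset A$ the maximal ideal, set $d:=[K(X):K(Y)]=\deg\pi$, and observe that $\pi_*[X]=d\cdot[Y]$, so $\mult_y\pi_*X=d\cdot\mult_yY$.

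The proof then compares the asymptotic behavior as $m\to\infty$ of two length functions. By the associativity of Hilbert--Samuel multiplicity for finite modules over the local ring of the variety $Y$,
\[
\lim_{m\to\infty}\frac{n!}{m^n}\ell_A(B/\frak n^mB)=\dim_{K(Y)}\bigl(B\otimes_AK(Y)\bigr)\cdot\mult_yY=d\cdot\mult_yY,
\]
because $B\otimes_AK(Y)=K(X)$. On the other hand, the decomposition $B=\prod_iB_{x_i}$ together with the equalities $k(x_i)=k(y)$ (we work with closed points over an algebraically closed field) gives
\[
\ell_A(B/\frak n^mB)=\sum_i\ell_{B_{x_i}}(B_{x_i}/\frak n^mB_{x_i}).
\]
The key input is the trivial containment $\frak n\cdot B_{x_i}\subseteq\frak m_i$, which iterates to $\frak n^mB_{x_i}\subseteq\frak m_i^m$ and produces surjections $B_{x_i}/\frak n^mB_{x_i}\twoheadrightarrow B_{x_i}/\frak m_i^m$. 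Comparing leading coefficients of the resulting polynomial-in-$m$ length inequalities yields $\mult_y\pi_*X=d\cdot\mult_yY\geq\sum_i\mult_{x_i}X$. Retaining only the summand $x_i=x$ recovers the first inequality of the lemma.

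I expect the main obstacle to be the bookkeeping around the scheme-theoretic fiber $\pi^{-1}y$ and its possibly embedded or non-reduced structure. The hypothesis that $\pi$ is finite near $x$ is precisely what guarantees that on the relevant neighborhood $\pi^{-1}y$ is zero-dimensional with primary components the points $x_i$, licensing the decomposition $B=\prod_iB_{x_i}$ and the identification of the $\frak m_i$ with the maximal ideals appearing in the statement. The associativity-of-multiplicity formula invoked above is a standard commutative-algebra input and requires only that $A$ be an unmixed local ring of dimension $n$, which is automatic for the local ring of a variety at a point; the remaining comparison of leading coefficients is then purely formal.
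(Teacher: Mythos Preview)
Your Hilbert--Samuel approach is correct in substance and genuinely different from the paper's. The paper argues geometrically: it blows up $y$ in $Y$ and the scheme $z=\pi^{-1}y$ in $X$, obtains $\tilde\pi^*E=F$ for the exceptional divisors, and reads off $\mult_xX\leq -(-F_x)^n\leq -(-F)^n=\deg\pi\cdot\mult_yY$ from intersection numbers. Your route trades intersection theory for the associativity formula and the elementary containment $\frak nB_{x_i}\subseteq\frak m_i$; each argument is clean in its own language, and yours has the advantage of being purely local.

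Two points need tightening. First, the reduction: you cannot in general ``shrink $X$ to an open neighborhood of $x$ on which $\pi$ is finite.'' If $\pi^{-1}y$ has positive-dimensional components away from $x$, restricting to an open $U\ni x$ that avoids them destroys properness, and a quasi-finite non-proper map need not be finite (already $t\mapsto t^2$ on $\bb A^1\setminus\{-1\}\to\bb A^1$ fails). The paper meets the same obstruction and resolves it by Stein factorization $X\xrightarrow{f}Z\xrightarrow{g}Y$: since $x_i$ is isolated in its fiber and $f$ has connected fibers, $f$ identifies formal neighborhoods of $x_i$ and $f(x_i)$, so $\mult_{x_i}X=\mult_{f(x_i)}Z$; then $g$ is genuinely finite and your Hilbert--Samuel argument applies to it verbatim, with $\deg g=\deg\pi$. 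When $\pi^{-1}y$ is already zero-dimensional you should shrink $Y$ (not $X$) and use proper $+$ quasi-finite $=$ finite. Second, a minor algebraic slip: the semi-local ring $B$ is not literally $\prod_iB_{x_i}$. What holds, and is all you use, is that the Artinian quotient $B/\frak n^mB$ decomposes as $\prod_iB_{x_i}/\frak n^mB_{x_i}$, so your length identity stands.
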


\begin{proof}Let $z:=\pi^{-1}y$ denote the scheme theoretic preimage of $y$.
Assume first that $z$ is a finite length subscheme. 
Let $\tilde \pi:\widetilde X\to\widetilde Y$ be the induced morphism between $\bl_yY$
and $\bl_{z}X$.
Let $E$ denote the exceptional divisor of $\widetilde Y$ and let $F$ denote the exceptional divisor
of $\widetilde X$. We have $\tilde \pi^*E=F$. By $F_x$ we denote the (connected) component of $F$ over $x$. 
It is also a Cartier divisor.
Let $x'$ denote the primary component of $z$ centered at $x$.
For $n:=\dim X$,
we have $\mult_xX\leq\mult_{x'}X=-(-F_x)^n\leq -(-F)^n=-(\deg\pi)\cdot (-E)^n=\deg\pi\cdot\mult_yY=\mult_y\pi_*X$.

When $z$ has components of positive dimension, automatically not through $x$, the inequality
$-(-F_x)^n\leq -(-F)^n$ is unclear. In this case, let $X\overset{f}{\to}Z\overset{g}{\to} Y$
be the Stein factorization of $\pi$. Apply the previous case twice. 
The more general statement is analogous.
\end{proof}

\section{Seshadri constants for curves}

For $C$ a curve cycle and $x\in X$, set
$$\sh(C;x):=\inf\left\{\frac{C\cdotp L}{\mult_xL}\ \big|\ L\mbox{ effective Cartier divisor on }X\mbox{ through }x\right\}.$$

\begin{rmk}From the definition we see that $\sh(C;x)$ depends only on the numerical class of $C$.
Furthermore $[C]\mapsto\sh(C;x)$ is 1-homogeneous on $N_1(X)$, and it is nonnegative and concave on $\Mov_1(X)$.
\end{rmk}

\begin{rmk}\label{rmk:irredenough}If $X$ is smooth, and $C$ is movable, then the infimum can be computed over irreducible divisors $L$.
This is because of the inquality $\frac{a+b}{c+d}\geq\min\left\{\frac ac,\frac bd\right\}$ for $a,b\in\bb R$ and $c,d>0$. When $X$ is singular and $Z$ is a component of $L$, the intersection number $C\cdotp Z$ may be undefined.
\end{rmk}

\begin{ex}[Projective space]\label{ex:pn}Let $\Lambda$ be a line in $\bb P^n$. By B\' ezout, $\frac{\deg L}{\mult_xL}\geq 1$ for every effective divisor $L$ through a point $x\in\bb P^n$. Equality is achieved when $L$ is a linear hyperplane
through $x$. Thus $\sh(\Lambda;x)=1$ for every $x\in\bb P^n$.
\end{ex}
\begin{rmk}[Real coefficients] By the proof of \cite[Lemma 0.14]{fujino09}, we may work with $\bb R$-Cartier $\bb R$-divisors $L$ in the definition of $\sh(C;x)$.
The claim is that being effective for an $\bb R$-Cartier $\bb R$-divisor is equivalent to being a nonnegative combination of (possibly nonreduced and reducible) effective Cartier divisors.
\end{rmk}

\begin{lem}If $C\in N_1(X)$, then $\sh(C;x)\geq 0$ for all $x\in X$ if and only if $C\in\Mov_1(X)$.
\end{lem}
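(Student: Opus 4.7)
The strategy is to use BDPP duality, which identifies $\Mov_1(X)$ with the dual of $\Eff^1(X)$. Since the paper cites this explicitly (and in all needed generalities in the remark following the statement of BDPP), the lemma reduces to translating the Seshadri-type inequality into a pairing inequality against every effective Cartier divisor.

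For the easy direction, assume $C\in\Mov_1(X)$. If $L$ is an effective Cartier divisor through $x$, then $[L]\in\Eff^1(X)$, so BDPP gives $C\cdotp L\geq 0$. Since $\mult_xL\geq 1>0$, every ratio in the defining infimum is nonnegative, so $\sh(C;x)\geq 0$.

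For the converse, assume $\sh(C;x)\geq 0$ for every $x\in X$. Let $L$ be an arbitrary effective Cartier divisor on $X$. Pick any closed point $x\in\Supp L$; then $L$ is one of the divisors competing in the infimum defining $\sh(C;x)$, so
\[
\frac{C\cdotp L}{\mult_xL}\;\geq\;\sh(C;x)\;\geq\;0,
\]
and hence $C\cdotp L\geq 0$. Since effective Cartier divisors generate a convex cone whose closure is $\Eff^1(X)$, and since the pairing $N_1(X)\times N^1(X)\to\bb R$ is continuous and bilinear, we conclude $C\cdotp\alpha\geq 0$ for every $\alpha\in\Eff^1(X)$. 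By BDPP this is exactly the condition $C\in\Mov_1(X)^{\vee\vee}=\Mov_1(X)$.

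The only genuine input is BDPP duality; beyond that, the argument is just an unpacking of the infimum and an approximation by effective Cartier classes. The one small point worth being careful about is that the infimum is over Cartier (not Weil) divisors, but this is exactly what matches BDPP, since $\Eff^1(X)$ is defined as the closure of the cone of effective Cartier classes.
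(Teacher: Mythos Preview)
Your proof is correct and is exactly the unpacking the paper has in mind when it writes ``Immediate from \cite{bdpp13}'': both directions reduce to the duality $\Mov_1(X)=\Eff^1(X)^{\vee}$, with the converse using that any nonzero effective Cartier divisor has a point in its support to feed into the infimum. One cosmetic remark: it is cleaner to say $C\in\Eff^1(X)^{\vee}=\Mov_1(X)$ directly rather than passing through the double dual.
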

\begin{proof}Immediate from \cite{bdpp13}.
\end{proof}

The next remark shows that at least when we want to restrict to finite Seshadri constants, we do not lose much if we just work with movable classes.

\begin{rmk}\label{rmk:nonmov}If $C\in N_1(X)$ and there exists a base point free Cartier divisor $H$ such that $C\cdotp H\geq 0$ (e.g., when $C$ is effective), 
and there exists an effective Cartier divisor $D$
such that $C\cdotp D<0$, then $\sh(C;x)=-\infty$ for all $x\not\in\Supp D$. 
(Indeed up to replacing $H$ by a divisor passing through $x$, we see that 
$$\inf_{m\to\infty}\frac{C\cdotp(mD+H)}{\mult_x(mD+H)}=-\infty.)$$
\qed
\end{rmk}

The following is the easy implication of a Seshadri-type characterization of the strict interior of $\Mov_1(X)$.

\begin{lem}\label{lem:uniformlowerbound}If $[C]$ is in the strict interior of $\Mov_1(X)$, then there exists a constant $\varepsilon>0$
such that $\sh(C;x)\geq\varepsilon$ for all $x\in X$.
\end{lem}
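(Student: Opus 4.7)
The idea is to translate ``strict interior of $\Mov_1(X)$'' into a uniform quantitative positivity statement against all of $\Eff^1(X)$, and then to dominate the pointwise multiplicity $\mult_xL$ by a fixed intersection number $A^{n-1}\cdotp L$ coming from a very ample class $A$. Fix such $A$. The linear functional $\deg_A:=A^{n-1}\cdotp(-)$ on $N^1(X)$ is strictly positive on $\Eff^1(X)\setminus\{0\}$: for a nonzero effective Cartier $L$, Nakai--Moishezon applied to $A|_L$ gives $A^{n-1}\cdotp L=(A|_L)^{n-1}>0$. Hence the slice
$$S:=\bigl\{L\in\Eff^1(X)\st A^{n-1}\cdotp L=1\bigr\}$$
is a compact convex base of the cone $\Eff^1(X)$.

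Next, I would invoke the duality $\Mov_1(X)=\Eff^1(X)^{\vee}$ from \cite{bdpp13}. Saying that $[C]$ is in the strict interior of $\Mov_1(X)$ is equivalent to $C\cdotp L>0$ for every nonzero $L\in\Eff^1(X)$. Since $S$ is compact and $L\mapsto C\cdotp L$ is continuous and positive on $S$, there is a constant $\varepsilon>0$ with $C\cdotp L\geq\varepsilon$ for all $L\in S$. By $1$-homogeneity this upgrades to
$$C\cdotp L\;\geq\;\varepsilon\cdot(A^{n-1}\cdotp L)\qquad\text{for all }L\in\Eff^1(X).$$

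It now remains to check that $A^{n-1}\cdotp L\geq\mult_xL$ for every closed point $x\in X$ and every effective Cartier divisor $L$ through $x$. Since $A$ is very ample and the sublinear system of divisors in $|A|$ vanishing at $x$ has codimension at most $1$, a dimension-count Bertini argument produces $n-1$ general members whose scheme-theoretic intersection is a reduced irreducible curve $\gamma$ through $x$ of numerical class $A^{n-1}$; in particular $\mult_x\gamma\geq 1$. The standard local inequality $\gamma\cdotp L\geq\mult_x\gamma\cdotp\mult_xL$ then gives $A^{n-1}\cdotp L\geq\mult_xL$. Combining with the displayed inequality yields $C\cdotp L\geq\varepsilon\cdot\mult_xL$ uniformly in $x$ and $L$, and hence $\sh(C;x)\geq\varepsilon$ for all $x\in X$.

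The one point requiring care is the very last step for possibly singular points $x\in X$: one has to ensure that the general complete intersection of $n-1$ divisors in $|A|$ vanishing at $x$ is of pure dimension one and really passes through $x$. This is a routine Bertini/dimension-count argument once $A$ is taken sufficiently ample, and it is the only genuine obstacle in the proof; everything else is a compactness plus duality argument.
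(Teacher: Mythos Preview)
Your proof is correct, and it takes a genuinely different route from the paper's.

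The paper argues as follows: since $[C]$ is interior, write $[C]=[H^{n-1}]+\alpha$ with $H$ an ample $\bb Q$-divisor and $\alpha\in\Mov_1(X)$. Choose $m$ so that $mH$ is very ample; then $\pi^*H-\tfrac1mE$ is nef on $\bl_xX$ for every $x$, and expanding $(\pi^*H-\tfrac1mE)^{n-1}\cdot\overline L\geq 0$ (the mixed terms vanish because $\pi^*H\cdot E=0$) yields $H^{n-1}\cdot L\geq\tfrac1{m^{n-1}}\mult_xL$. This gives the explicit bound $\sh(C;x)\geq\tfrac1{m^{n-1}}$. Your argument instead slices $\Eff^1(X)$ by $A^{n-1}\cdot L=1$, uses compactness to get $C\cdot L\geq\varepsilon\cdot A^{n-1}\cdot L$, and then invokes the B\'ezout inequality $A^{n-1}\cdot L\geq\mult_xL$. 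The paper's approach produces an explicit constant and dovetails with the blow-up formalism used throughout; yours is more self-contained and does not even require the full BDPP duality---the easy inclusion $\Mov_1(X)\subseteq\Eff^1(X)^{\vee}$ is enough, since both cones are full-dimensional.

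One small correction: the delicate point at singular $x$ is not the construction of $\gamma$, which is routine, but rather your ``standard local inequality'' $\gamma\cdot L\geq\mult_x\gamma\cdot\mult_xL$. This is \cite[Corollary~12.4]{fulton84}, which requires the ambient variety to be nonsingular at $x$. The fix is painless: since $A$ is very ample, $A^{n-1}\cdot L$ is simply the degree of $L$ as a subvariety of $\bb P^N$, and $\deg_{\bb P^N}L\geq\mult_xL$ holds for any point of any projective subscheme (apply the same B\'ezout inequality in the smooth ambient $\bb P^N$, or use the tangent-cone interpretation of multiplicity). Alternatively, the paper's blow-up computation above proves exactly this inequality with no smoothness hypothesis.
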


\begin{proof}
Denote $n:=\dim X$.
There exists
an ample $\bb Q$-divisor $H$ on $X$ and $\alpha\in\Mov_1(X)$ such that $[C]=[H^{n-1}]+\alpha$. 
Let $\pi:\bl_xX\to X$ be the blow-up of $x$ with exceptional divisor $E$.
For all effective Cartier divisors $L$ through fixed $x$,  
$$\frac{C\cdot L}{\mult_xL}\geq\frac{H^{n-1}\cdot L}{\mult_xL}=\frac{\pi^*H^{n-1}\cdot\overline L}{(-E)^{n-1}\cdot(-\overline L)},$$
where $\overline L$ denotes the strict transform of $L$ (it is still Cartier, since $E$ is Cartier).
Since $H$ is ample, by the Seshadri criterion of ampleness (\cite[Chapter 10]{Har70} or \cite[Theorem 1.4.13]{laz04}) there exists $m>0$ such that $\sh(H;x)\geq\frac 1m$ for all $x\in X$ (choosing $m$ such that $mH$ is very ample is enough). Then $\pi^*H-\frac 1mE$ is nef for all $x\in X$ and 
$$0\leq\big(\pi^*H-\frac 1mE\big)^{n-1}\cdot\overline L=\pi^*H^{n-1}\cdot\overline L+\frac 1{m^{n-1}}(-E)^{n-1}\cdot\overline L.$$
Note that the intermediary intersections $(\pi^*H)^{i}\cdot E^{n-1-i}\cdot \overline L$ with $0<i<n-1$ vanish because $\pi^*H\cdot E=0$.
This proves $\sh(C;x)\geq\frac 1{m^{n-1}}$ for all $x\in X$. 
\end{proof}

\begin{cor}The function $\sh(\cdot;x)$ is locally uniformly continuous on the strict interior of $\Mov_1(X)$.
\end{cor}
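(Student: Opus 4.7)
The plan is to show that the family $\bigl\{\sh(\cdot;x)\bigr\}_{x\in X}$ is equicontinuous on each compact subset of the strict interior of $\Mov_1(X)$, by producing uniform-in-$x$ upper and lower bounds and then invoking the standard Lipschitz estimate for bounded concave functions. Recall that for each fixed $x$ the function $\sh(\cdot;x)$ is concave and $1$-homogeneous on $\Mov_1(X)$, and therefore superadditive: for any $A,B\in\Mov_1(X)$,
$$\sh(A+B;x)=2\sh\bigl(\tfrac{A+B}{2};x\bigr)\geq\sh(A;x)+\sh(B;x).$$

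Fix a compact $K\subset\mathrm{int}\,\Mov_1(X)$ and enlarge it to a compact convex $K'$ with $K$ contained in the topological interior of $K'$ and $K'\subset\mathrm{int}\,\Mov_1(X)$. For the uniform lower bound I would pick an ample $\bb Q$-divisor $H$ such that the class $[H^{n-1}]$ is small enough that $C-[H^{n-1}]\in\Mov_1(X)$ for every $C\in K'$; this is possible by compactness of $K'$ and openness of $\mathrm{int}\,\Mov_1(X)$ (for instance, scale any fixed ample class). Superadditivity together with nonnegativity of $\sh(\cdot;x)$ on $\Mov_1(X)$ then gives
$$\sh(C;x)\geq\sh(H^{n-1};x)+\sh(C-H^{n-1};x)\geq\sh(H^{n-1};x)\geq\varepsilon$$
for every $C\in K'$ and every $x\in X$, where $\varepsilon>0$ is supplied by Lemma \ref{lem:uniformlowerbound} applied to the single fixed class $[H^{n-1}]$. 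For the uniform upper bound, fix a very ample divisor $A$ on $X$; for any $x\in X$ there exists $D\in|A|$ with $x\in\Supp D$, and such $D$ has $\mult_xD\geq 1$, so $\sh(C;x)\leq C\cdot A\leq M$, where $M:=\max_{C\in K'}(C\cdot A)$ is independent of $x$.

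Finally I would invoke the classical fact from convex analysis that a concave function on an open convex set of a finite-dimensional normed space that is bounded between $m$ and $M$ on a convex neighborhood of a compact set $K$ is Lipschitz on $K$ with constant controlled by $M-m$ and the distance from $K$ to the boundary of the neighborhood. Applied to each $\sh(\cdot;x)$ on $\mathrm{int}(K')$ with the uniform bounds just produced, this yields a Lipschitz constant independent of $x$ on $K$, which is exactly the asserted local uniform continuity. The only step that requires genuine care is the existence of a \emph{single} ample $H$ that witnesses positivity simultaneously for every class in $K'$; once that compactness argument is made, Lemma \ref{lem:uniformlowerbound} delivers the uniformity in $x$ automatically, and the rest of the argument is a direct citation from convex analysis.
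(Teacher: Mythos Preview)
Your proof is correct and follows essentially the same strategy as the paper. The paper's proof is a two-line argument: Lemma~\ref{lem:uniformlowerbound} gives positivity on the interior, and then \cite[Lemma 2.7]{Leh16} is invoked as a black box. That cited lemma is precisely the convex-analysis fact you spell out at the end---a nonnegative, $1$-homogeneous, concave function on a full-dimensional cone that is positive on the interior is locally Lipschitz there, with constants depending only on local upper and lower bounds. Your version is simply more self-contained: you explicitly manufacture the uniform-in-$x$ upper bound via a very ample $A$, and the uniform-in-$x$ lower bound by anchoring every $C\in K'$ to a single complete-intersection class $[H^{n-1}]$ via superadditivity. Both routes feed into the same standard Lipschitz estimate for bounded concave functions.
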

\begin{proof}The lemma proves that the function is positive on the interior of the movable cone.
Then apply \cite[Lemma 2.7]{Leh16}.
\end{proof}

For an interpretation equivalent to \eqref{seshdefdiv}, we need to be able to pullback. This requires some regularity
condition on $\pi$. 
\begin{defn}\label{def:pullcurve}Let $x\in X$ be a \emph{smooth} point.
In this case $E\simeq\mathbb P^{n-1}$. Fix $\ell$ a line in $E$, so that $E\cdot\ell=-1$. 
For any curve $C\subseteq X$, set
$$\pi^*C:=\overline C+(\mult_xC)\cdot\ell.$$
\end{defn}

\begin{prop}The pullback $\pi^*$ defined above is linear, respects numerical equivalence, and satisfies the projection formula $$D\cdot\pi^*C=(\pi_*D)\cdot C$$ for any Cartier divisor $D$ on $\bl_xX$.
\end{prop}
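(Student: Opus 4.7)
The plan is to verify linearity, numerical invariance, and the projection formula by direct computation, using three elementary identities on $\bl_xX$: the one-dimensional case of \eqref{multform}, which reads $\overline V\cdot E=\mult_xV$; the choice of $\ell$, which gives $\ell\cdot E=-1$; and the fact that $\pi^*D\cdot\ell=0$ for every Cartier divisor $D$ on $X$ (by the projection formula, since $\pi_*\ell=0$ as $\ell$ is contracted).

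Linearity is immediate. As noted in the text following \eqref{multform}, the strict transform $V\mapsto\overline V$ extends linearly to $1$-cycles, and $V\mapsto\mult_xV$ is also linear, so $C\mapsto\overline C+(\mult_xC)\ell$ is a linear map on $1$-cycles.

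Next I would prove the projection formula. The key structural input is that, since $x$ is a smooth point, $N^1(\bl_xX)$ is spanned by $\pi^*N^1(X)$ and $[E]$; this follows from the standard exact sequence $\text{Pic}(X)\to\text{Pic}(\bl_xX)\to\bb Z\to 0$ attached to the blow-up of a smooth point (the rightmost map being intersection with $\ell$), together with the isomorphism $\bl_xX\setminus E\simeq X\setminus\{x\}$. It therefore suffices to check the projection formula when $D=\pi^*D_0$ and when $D=E$. In the first case, $\pi^*D_0\cdot\overline C=D_0\cdot\pi_*\overline C=D_0\cdot C$ by the projection formula and $\pi_*\overline C=C$, while $\pi^*D_0\cdot\ell=0$, so
$$\pi^*D_0\cdot\pi^*C=D_0\cdot C=(\pi_*\pi^*D_0)\cdot C.$$
In the second case, using $\overline C\cdot E=\mult_xC$ and $\ell\cdot E=-1$,
$$E\cdot\pi^*C=\mult_xC-\mult_xC=0=(\pi_*E)\cdot C,$$
since $\pi_*E=0$.

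Numerical invariance then follows formally. If $C_1\equiv C_2$ in $N_1(X)$, then for every Cartier divisor $D$ on $\bl_xX$ the projection formula gives $D\cdot\pi^*C_1=(\pi_*D)\cdot C_1=(\pi_*D)\cdot C_2=D\cdot\pi^*C_2$, so $\pi^*C_1\equiv\pi^*C_2$ in $N_1(\bl_xX)$, and $\pi^*$ descends to a well-defined map $N_1(X)\to N_1(\bl_xX)$. The main obstacle is the numerical splitting $N^1(\bl_xX)=\pi^*N^1(X)+\bb R[E]$ when $X$ is possibly singular away from $x$; this is standard but deserves care, and comes down to the fact that $\pi$ is an isomorphism outside the smooth point $x$, so that Picard and numerical classes on $\bl_xX$ differ from those on $X$ only by a multiple of $[E]$.
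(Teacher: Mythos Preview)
Your proof is correct and follows essentially the same route as the paper's: both rely on the decomposition of any Cartier divisor on $\bl_xX$ as $\pi^*(\text{something})$ plus a multiple of $E$, and then check the two cases separately using $\overline C\cdot E=\mult_xC$, $\ell\cdot E=-1$, and the usual projection formula. The only differences are organizational: the paper writes the decomposition as the explicit identity $D=\pi^*\pi_*D-(D\cdot\ell)E$ (valid in $\operatorname{Pic}$, not just numerically), computes $\pi^*C\cdot E=0$ first and uses it to deduce numerical invariance, then finishes the projection formula; you instead prove the projection formula first and deduce numerical invariance from it, which is arguably the cleaner logical order.
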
 

\begin{proof}Linearity holds because $V\mapsto\overline V$ is linear, and $V\mapsto\mult_xV$ is linear on cycles of the same dimension. If $D$ is Cartier on $\bl_xX$, then $\pi_*D$ is Cartier on $X$ (because $\pi$
is an isomorphism away from the smooth point $x$) and 
\begin{equation}\label{picsmoothblow}D=\pi^*\pi_*D-(D\cdot\ell)E.\end{equation}
If $C$ is a $1$-cycle, then $\pi^*C\cdot E=0$. 
This is straightforward from the relation $\overline C\cdot E=\mult_xC.$
(see \eqref{multform}) and proves that $\pi^*$ respects numerical equivalence.

From the definition, $\pi_*\pi^*C=C$. Then the projection formula is clear for divisors $D=\pi^*L$.
Using \eqref{picsmoothblow}, it remains to treat the case $D=E$, which follows from $\pi^*C\cdot E=0$ above. 
\end{proof}

\begin{lem}If $x\in X$ is a smooth point and $[C]\in\Mov_1(X)$, then $[\pi^*C]\in\Mov_1(\bl_xX)$.
\end{lem}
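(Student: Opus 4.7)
My plan is to use the duality $\Mov_1 = (\Eff^1)^\vee$ from \cite{bdpp13}, which applies both on $X$ and on $\bl_xX$ (the latter is still projective, and the cited remark confirms that duality holds for projective varieties). Concretely, $[\pi^*C]\in\Mov_1(\bl_xX)$ if and only if $\pi^*C\cdot D\geq 0$ for every class $D\in\Eff^1(\bl_xX)$; by continuity and the fact that effective Cartier divisors span a cone dense in $\Eff^1(\bl_xX)$, it suffices to check this for $D$ an effective Cartier divisor.

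Given such a $D$, the projection formula just established gives $\pi^*C\cdot D=C\cdot\pi_*D$, so the problem reduces to checking that $\pi_*D$ defines a class in $\Eff^1(X)$, i.e.\ that the cycle-theoretic pushforward $\pi_*D$ is an effective Cartier divisor on $X$. Effectivity is automatic, so only Cartierness is at issue. I would verify this point-by-point: on $X\setminus\{x\}$ the map $\pi$ is an isomorphism, so $\pi_*D$ is Cartier on that open set because $D$ is; and at the point $x$, the hypothesis that $x$ is a smooth point of $X$ means every Weil divisor on $X$ is locally Cartier at $x$. Cartierness being a local condition, $\pi_*D$ is a genuine Cartier divisor on all of $X$.

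Once $\pi_*D\in\Eff^1(X)$ is established, the duality $C\in\Mov_1(X)=\Eff^1(X)^\vee$ yields $C\cdot\pi_*D\geq 0$, and the chain $\pi^*C\cdot D=C\cdot\pi_*D\geq 0$ closes the argument.

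The only real subtlety is the step showing $\pi_*D$ is Cartier; this is precisely where the smoothness of $x$ is used, and it is the reason the statement would fail (or at least require a different argument) at a singular point, where pushforwards of Cartier divisors may only be Weil. Everything else is a direct invocation of \cite{bdpp13} and the projection formula already proved above.
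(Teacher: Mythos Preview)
Your proof is correct and follows exactly the approach indicated in the paper: apply the projection formula $\pi^*C\cdot D=C\cdot\pi_*D$ together with the duality $\Mov_1=\bigl(\Eff^1\bigr)^{\vee}$ from \cite{bdpp13}. Your explicit verification that $\pi_*D$ is Cartier (using that $\pi$ is an isomorphism away from the smooth point $x$) is precisely the detail the paper records in the preceding proposition and leaves implicit here.
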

\begin{proof}Use the projection formula and the duality between $\Mov_1(\bl_xX)$ and $\Eff^1(\bl_xX)$ (cf. \cite{bdpp13}).
\end{proof}

The next result interprets the Seshadri constant of a curve as the distance from $[\pi^*C]$ to the boundary of $\Mov_1(\bl_xX)$ in the $[-\ell]$ direction:

\begin{prop}\label{prop:seshadriviapull}Let $x\in X$ be a smooth point on a projective variety. Let $C\subseteq X$ be a curve with $[C]\in\Mov_1(X)$.
Then $$\sh(C;x)=\max\bigl\{t \st\ [\pi^*C-t\ell]\in\Mov_1(\bl_xX)\bigr\}.$$
\end{prop}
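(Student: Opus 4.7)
The plan is to use the BDPP duality $\Mov_1(\bl_xX) = \Eff^1(\bl_xX)^{\vee}$ to translate $\pi^*C - t\ell \in \Mov_1(\bl_xX)$ into explicit pairing inequalities. Observe first that $T := \{t \in \bb R : \pi^*C - t\ell \in \Mov_1(\bl_xX)\}$ is a closed convex subset of $\bb R$, hence a closed interval. Since $0 \in T$ by the previous lemma, and $T$ will turn out to be bounded above, the supremum of $T$ is automatically attained, and the goal becomes to identify it.

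By duality, $\pi^*C - t\ell \in \Mov_1(\bl_xX)$ is equivalent to $(\pi^*C - t\ell) \cdot D \geq 0$ for every $D \in \Eff^1(\bl_xX)$, and by density it suffices to check this against effective $\bb R$-Cartier divisors $D$ on $\bl_xX$. Smoothness of $x$ yields a clean decomposition $D = \overline{L} + mE$ with $m \geq 0$ the coefficient of $E$ in $D$ and $L := \pi_* D$ an effective $\bb R$-Cartier divisor on $X$ (Cartier at $x$ by local factoriality of a regular local ring, and Cartier off $x$ because $\pi$ is an isomorphism there). Applying the projection formula for $\pi^*$ established just above, together with the identities $\pi^*C \cdot E = 0$ and $\ell \cdot E = -1$, one obtains the two basic pairings
\[
(\pi^*C - t\ell) \cdot \overline{L} = C \cdot L - t \, \mult_x L, \qquad (\pi^*C - t\ell) \cdot E = t.
\]
By bilinearity the full pairing against $D = \overline L + mE$ equals $C \cdot L + t(m - \mult_x L)$, whose most restrictive instance for $t \geq 0$ occurs at $m = 0$.

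Hence the condition $\pi^*C - t\ell \in \Mov_1(\bl_xX)$ reduces to the conjunction of $t \geq 0$ and $t \cdot \mult_x L \leq C \cdot L$ for every effective $\bb R$-Cartier divisor $L$ on $X$. The second inequality is automatic when $L$ avoids $x$ (then $\mult_x L = 0$ and $C \cdot L \geq 0$ by $[C] \in \Mov_1(X)$), while for $L$ through $x$ it becomes $t \leq (C \cdot L)/\mult_x L$. Taking infimum and invoking the earlier remark that $\sh(C;x)$ may equivalently be computed using effective $\bb R$-Cartier divisors yields $T = [0, \sh(C;x)]$, whence $\max T = \sh(C;x)$. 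The main technical point is justifying the decomposition $D = \overline L + mE$ with $L$ effective $\bb R$-Cartier on $X$: this relies crucially on smoothness of $x$ to ensure that $\pi_* D$ is Cartier on $X$ and to validate the clean formula $\overline L = \pi^* L - (\mult_x L) E$ used in computing the first displayed pairing.
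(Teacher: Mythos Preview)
Your argument is correct and follows essentially the same route as the paper's own proof: both use the BDPP duality $\Mov_1(\bl_xX)=\Eff^1(\bl_xX)^{\vee}$, the decomposition of an effective divisor on $\bl_xX$ as $\overline L+mE$ (relying on smoothness of $x$), and the two pairing computations $(\pi^*C-t\ell)\cdot\overline L=C\cdot L-t\,\mult_xL$ and $(\pi^*C-t\ell)\cdot E=t$. Your presentation packages the two directions as a single determination of the interval $T=[0,\sh(C;x)]$, but the substance is the same.
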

\noindent Note that the maximum is nonnegative by the previous lemma, and well-defined (finite) because
for fixed ample $H$ on $\bl_xX$, we have $(\pi^*C-t\ell)\cdot H<0$ for $t\gg 0$.
\begin{proof}Let $t\geq 0$ with $\pi^*C-t\ell$ movable. Note that 
\begin{equation}\label{divmult}\overline L=\pi^*L-(\mult_xL)\cdot E\end{equation} is Cartier on $\bl_xX$ for any Cartier divisor $L$ on $X$. Then $(\pi^*C-t\ell)\cdot\overline L\geq 0$ for all effective Cartier divisors $L$
on $X$. By \eqref{multform} and the projection formula this is equivalent to $C\cdot L\geq t\cdot\mult_xL$. If $L$ ranges through effective Cartier divisors that pass through $x$, then we obtain $t\leq\sh(C;x)$. It follows that $\max\{t\geq 0\ |\ [\pi^*C-t\ell]\in\Mov_1(\bl_xX)\}\leq\sh(C;x)$.

Conversely, if $0\leq t\leq\frac{C\cdot L}{\mult_xL}$ for all effective Cartier $L$ through $x$, then $C\cdot L\geq t\cdot\mult_xL$.
This is also clearly true for $L$ effective Cartier not passing through $x$. As above, $(\pi^*C-t\ell)\cdot\overline L\geq 0$
for all effective Cartier $L$ on $X$. We also observe $(\pi^*C-t\ell)\cdot E=t\geq 0$. Since $x$ is a smooth point of $X$,
any effective Cartier divisor on $\bl_xX$ is the sum of an effective Cartier divisor of form $\overline L$ and a nonnegative
multiple of $E$. Using \cite{bdpp13} we conclude that $\pi^*C-t\ell$ is movable. Consequently $\pi^*C-\sh(C;x)\ell$ is movable.
\end{proof}

Seshadri constants for movable curves also verify a semi-continuity type statement analogous to the case
of divisors (\cite[Example 5.1.11]{laz04}).

\begin{prop}\label{prop:seshadrisemicontinuity}Let $T$ be a smooth variety over an uncountable algebraically closed field, and let $p:\mathscr X\to T$
be a smooth projective morphism with connected fibers. Assume that $p$ admits a section $x:T\to\mathscr X$. Let $\mathscr C\subset\mathscr X$ be a cycle of dimension $\dim T+1$.
Denote by $X_t$ the scheme theoretic fiber of $p$ over $t\in T$
and by $[C_t]$ the class of the restriction $\mathscr [C]|_{X_t}$ (in the sense of \cite[Chapter 8]{fulton84}).
Assume that $[C_t]\in\Mov_1(X_t)$ for all $t\in T$. Then $\sh([C_t];x_t)$ is constant for very general $t\in T$. For special $t$ it may only decrease.
\end{prop}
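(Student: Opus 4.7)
The plan is to set $v_0 := \sh([C_{\bar\eta}]; x_{\bar\eta})$ at the geometric generic point $\bar\eta$ of $T$, and prove separately that (i) $\sh([C_t]; x_t) \leq v_0$ for every $t \in T$ and (ii) $\sh([C_t]; x_t) \geq v_0$ for very general $t \in T$. Together these yield the conclusion, using uncountability of the ground field so that a countable union of proper Zariski-closed subsets of $T$ has very general complement.

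\emph{Part (i): upper bound by specialization.} Fix a rational $s > v_0$. By Definition \ref{defn:seshdef} choose an effective Cartier divisor $L_{\bar\eta}$ on $X_{\bar\eta}$ with $[C_{\bar\eta}] \cdot L_{\bar\eta} < s \cdot \mult_{x_{\bar\eta}} L_{\bar\eta}$. After passing to an \'etale neighborhood of a chosen closed point $t_0 \in T$, we may assume $L_{\bar\eta}$ descends to a divisor $L_\eta$ defined over the generic point. Let $R := \mathcal O_{T, t_0}$, and let $\mathcal L \subset \mathscr X \times_T \Spec R$ be the scheme-theoretic closure of $L_\eta$. Because $\mathscr X_R$ is integral and $L_\eta$ is Cartier, $\mathcal L$ is an effective Cartier divisor flat over $\Spec R$, so $L_0 := \mathcal L|_{X_{t_0}}$ is an effective Cartier divisor on $X_{t_0}$. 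Constancy of intersection numbers in flat families gives $[C_{t_0}] \cdot L_0 = [C_{\bar\eta}] \cdot L_{\bar\eta}$, while upper semi-continuity of the Hilbert--Samuel multiplicity in flat families of divisors gives $\mult_{x_{t_0}} L_0 \geq \mult_{x_{\bar\eta}} L_{\bar\eta}$. Therefore
\[\sh([C_{t_0}]; x_{t_0}) \leq \frac{[C_{t_0}] \cdot L_0}{\mult_{x_{t_0}} L_0} \leq \frac{[C_{\bar\eta}] \cdot L_{\bar\eta}}{\mult_{x_{\bar\eta}} L_{\bar\eta}} < s.\]
Letting $s \searrow v_0$ over rational values yields (i).

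\emph{Part (ii): lower bound via Hilbert schemes.} Fix a rational $s < v_0$. Effective Cartier divisors on fibers of $p$ are parameterized by the relative divisor Hilbert scheme $\mathrm{Div}(\mathscr X/T) = \coprod_i \mathcal H_i$, a disjoint union of countably many irreducible projective $T$-schemes $f_i : \mathcal H_i \to T$ (one for each Hilbert polynomial relative to a chosen $p$-polarization), each equipped with a universal effective divisor $\mathcal L_i \subset \mathcal H_i \times_T \mathscr X$. On each $\mathcal H_i$, the intersection number $\mu_i := [C_{f_i(h)}] \cdot [(\mathcal L_i)_h]$ is constant by flat-family invariance, whereas $h \mapsto \mult_{x_{f_i(h)}}(\mathcal L_i)_h$ is integer-valued and upper semi-continuous. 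Hence the bad locus
\[B_i := \bigl\{h \in \mathcal H_i : \mu_i < s \cdot \mult_{x_{f_i(h)}}(\mathcal L_i)_h \bigr\}\]
is a level set of an integer-valued upper semi-continuous function, hence Zariski closed in $\mathcal H_i$; its image $Z_i := f_i(B_i) \subseteq T$ is closed because $f_i$ is projective. If some $Z_i$ equalled $T$, the generic fiber would carry an effective Cartier divisor witnessing $\sh([C_{\bar\eta}]; x_{\bar\eta}) < s < v_0$, contradicting the choice of $s$. So each $Z_i$ is a proper closed subset of $T$, and
\[\bigl\{t \in T : \sh([C_t]; x_t) < s\bigr\} \subseteq \bigcup_i Z_i,\]
a countable union of proper closed subsets. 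Intersecting very general complements over rational $s < v_0$ preserves very generality and establishes (ii), completing the proof.

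The main technical obstacle is the Hilbert scheme setup in Part (ii): one must make the countability of components explicit (using discreteness of the Hilbert polynomial and boundedness after fixing a relative polarization on $\mathscr X/T$), and observe that the bad locus $B_i$ is actually closed rather than merely constructible -- which relies crucially on $\mu_i$ being constant on $\mathcal H_i$ and the multiplicity being integer-valued. The remaining ingredients -- flat closure of a Cartier divisor, constancy of intersection in flat families, and upper semi-continuity of multiplicity -- are standard. A minor simplification is that divisors avoiding $x_{f_i(h)}$ contribute $\mult = 0$ and $\mu_i \geq 0$ by movability of $[C_t]$, so they never enter $B_i$.
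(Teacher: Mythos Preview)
Your approach is genuinely different from the paper's and essentially correct. The paper works on the blow-up $\widetilde{\mathscr X}:=\bl_{x(T)}\mathscr X$, chooses a relative line class $\Lambda$ in the exceptional divisor, and for each $t_0$ applies Lemma~\ref{lem:movingjumps} to the cycle $\pi^*\mathscr C-\sh([C_{t_0}];x_{t_0})\Lambda$ to conclude that its restrictions stay movable for very general $t$; via Proposition~\ref{prop:seshadriviapull} this yields $\sh([C_t];x_t)\geq\sh([C_{t_0}];x_{t_0})$. You instead work directly with Definition~\ref{defn:seshdef}: you identify the very-general value as $\sh$ at the geometric generic point, prove the upper bound by specializing a single witnessing divisor, and prove the lower bound by the relative Hilbert scheme of divisors. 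Your route avoids the blow-up interpretation entirely and is more self-contained; the paper's route is shorter once Proposition~\ref{prop:seshadriviapull} and Lemma~\ref{lem:movingjumps} are in hand, and it packages the Hilbert-scheme bookkeeping into a single reusable lemma about movable classes in families.

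There is one gap to fix in Part~(i). You assert that the scheme-theoretic closure $\mathcal L\subset\mathscr X_R$ of $L_\eta$ is flat over $\Spec R=\Spec\mathcal O_{T,t_0}$. When $\dim T>1$ this can fail: for instance over $R=k[s,t]_{(s,t)}$ with $\mathscr X_R=\mathbb A^1_R$, the closure of $V(x-t/s)$ is $V(sx-t)$, whose fiber over the origin is the whole $\mathbb A^1$, so $L_0$ is not a proper Cartier divisor and the intersection and multiplicity comparisons break down. The standard remedy is to reduce to a DVR: after your \'etale (or generically finite) base change $T'\to T$ so that $L$ is defined at the generic point of $T'$, pick a smooth curve in $T'$ through a preimage of $t_0$ that dominates $T'$, and take $R$ to be the local ring of that curve. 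Over a DVR the closure of a Cartier divisor from the generic fiber has no torsion, hence is flat, and your constancy-of-intersection and upper-semicontinuity-of-multiplicity steps then go through verbatim. With this correction your argument is complete. (Two cosmetic points: the components $\mathcal H_i$ of the relative divisor scheme with fixed Hilbert polynomial need not be irreducible, so pass to irreducible components before asserting $\mu_i$ is constant; and your ``\'etale'' base change only needs to be generically finite and dominant, which matters in positive characteristic.)
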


There are no effectivity assumptions on $\mathscr C$.
Recall that a property is said to hold for very general $t\in T$ if there exists an at most countable collection
of proper closed subsets $V_i\subsetneq T$ such that the property holds for all $t\in T\setminus\bigcup_iV_i$.

\begin{proof}Let $\pi:\widetilde{\mathscr X}\to\mathscr X$ be the blow-up of $\mathscr X$ along the image of $x$
with induced smooth morphism $q:\widetilde{\mathscr X}\to T$ and exceptional divisor $\mathscr E$. 
Let $\Lambda$ be a cycle on $\widetilde {\mathscr X}$ such that the class of its restriction to each $E_t$
is the same as the class of a line in the exceptional divisor of $\bl_{x_t}X_t$. Such cycles exist, even effective ones. For fixed $t_0$, apply the lemma below for $\pi^*\mathscr C-\sh([C_{t_0}];x_{t_0})\Lambda$ (here $\pi^*\mathscr C$ denotes a choice of a cycle representing the Chow class $\pi^*[\mathscr C]$ in the sense of \cite[Chapter 8]{fulton84}) to show that $\sh([C_t];x_t)\geq\sh([C_{t_0}];x_{t_0})$ for very general $t\in T$. Then apply it for $t_0$ very general to find that $\sh([C_t];x_t)$ is constant for $t$ very general.
\end{proof}

\begin{lem}\label{lem:movingjumps}Let $T$ be a smooth variety over an uncountable algebraically closed field, and let $p:\mathscr X\to T$ be a smooth projective morphism with connected fibers. 
Let $\mathscr C\subseteq\mathscr X$ a cycle of dimension $\dim T+1$.  
Assume there exists $t_0\in T$ such that $[C_{t_0}]\in\Mov_1(X_{t_0})$. 
Then $[C_t]$ is movable for very general $t\in T$. 
\end{lem}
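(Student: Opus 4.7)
The plan is to bound the ``bad locus'' $B := \{t \in T\st [C_t] \notin \Mov_1(X_t)\}$ inside a countable union of proper Zariski closed subvarieties of $T$; since the base field is uncountable, this is precisely the assertion that very general $t \in T$ avoid $B$. Duality $\Mov_1(X_t) = \Eff^1(X_t)^{\vee}$ from \cite{bdpp13} reduces membership in $B$ to the existence of an effective Cartier divisor $D$ on $X_t$ with $[C_t]\cdot [D] < 0$. I will organize all such potential divisors globally through the relative Hilbert scheme of effective Cartier divisors on the fibers of $p$: fixing a relatively ample line bundle and decomposing by Hilbert polynomial yields countably many pieces, each projective over $T$; further breaking into irreducible components produces a countable family of structure morphisms $q_i : H_i \to T$ with $q_i$ projective, each carrying its universal flat family $\mathscr D_i \subset H_i \times_T \mathscr X$ of effective divisors on the fibers, such that every effective Cartier divisor on any $X_t$ arises as a fiber $D_s$ for some $s$ in some $H_i$.

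The key input is a constancy statement: on each irreducible component $H_i$, the intersection number $n_i(s) := [C_{q_i(s)}]\cdot [D_s]$ is independent of $s \in H_i$, with constant value denoted $n_i$. This is a ``conservation of numbers'' assertion applied to the flat family $\mathscr D_i / H_i$ combined with the refined Gysin restriction $[C_t] = [\mathscr C]|_{X_t}$, well defined because $X_t \hookrightarrow \mathscr X$ is a regular closed embedding by smoothness of $p$ (cf. \cite[Chapters 6, 8]{fulton84}). Granting constancy, a dichotomy arises for each $H_i$: if $q_i$ is surjective, then by properness $t_0 \in q_i(H_i)$, so picking $s \in q_i^{-1}(t_0)$ gives $n_i = [C_{t_0}]\cdot [D_s] \geq 0$ since $D_s$ is effective on $X_{t_0}$ and $[C_{t_0}]$ is movable; if $q_i$ is not surjective, properness forces $q_i(H_i) \subsetneq T$ to be a proper Zariski closed subvariety.

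Assembling: for every $t \in B$, a witness $D$ places $t$ in $q_i(H_i)$ for some $H_i$ with $n_i < 0$, and the dichotomy forces this $H_i$ to be non-surjective. Hence $B \subseteq \bigcup_{i : n_i < 0} q_i(H_i)$, a countable union of proper closed subvarieties, giving the lemma. The main technical obstacle will be justifying the constancy of $n_i$ cleanly along each $H_i$: conservation of numbers is standard for flat families, but $\mathscr C$ itself need not be flat over $T$, so care must be taken with the Gysin/refined intersection formalism to ensure $[C_{q_i(s)}]\cdot [D_s]$ genuinely depends only on the irreducible component of $s$ in $H_i$.
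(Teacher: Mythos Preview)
Your proposal is correct and is essentially the same argument as the paper's, repackaged as a direct proof rather than a contradiction: the paper assumes the bad locus is not contained in a countable union of proper closed sets, extracts a dominating component $H_i$, reduces to a generically finite map over $T$, and builds a single effective divisor $\mathscr D\subset\mathscr X$ whose fiberwise intersection with $C_t$ is negative for general $t$ but nonnegative at $t_0$. The constancy of intersection numbers that you flag as the main technical point is handled in the paper by passing to a smooth projective completion of $\mathscr X$ and computing the global number $\mathscr D\cdot\mathscr C\cdot[X_t]$ there; the same device applies on a completion of $H_i\times_T\mathscr X$ in your formulation.
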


\begin{proof}Let $(L_t,t)$ be the set of pairs with $t\in T$ and $L_t$ an irreducible divisor in $X_t$ with $[L_t]\cdot [C_t]<0$. By usual Hilbert scheme arguments, these are parameterized by countably many schemes $H_i$.
If the conclusion fails, then by the main result of \cite{bdpp13}, one of the $H_i$ dominates $T$.
Up to replacing $H_i$ by a closed subset, we may assume that the map $\epsilon:H_i\to T$ is generically
finite and dominant. 
Let $\mathscr D\subset\mathscr X$ be the closure of the union of the divisors $L_t$ parameterized
by $H_i$. It is an effective divisor that may contain some of the fibers of $p$ in its support. For general $t\in T$ though, the fiber $D_t$ is the sum $\sum_{\epsilon(h_i)=t}L_{h_i}$. In particular $[D_t]\cdot [C_t]<0$. 

Since $D$ is effective and $p$ is smooth, $[D_{t_0}]\in N^1(X_{t_0})$ is a pseudo-effective divisor class.
See \cite[Lemma 4.10]{fl16sw}.
For general $t\in T$ we find the contradiction $0\leq [D_{t_0}]\cdot [C_{t_0}]=[D_t]\cdot [C_t]<0$.
The intersection numbers are equal to $\mathscr D\cdot \mathscr C\cdot X_t$,
which makes sense after considering a smooth projective completion of $\mathscr X$.
\end{proof}

\begin{ex}[Toric varieties]\label{ex:toric}Let $X=X(\Delta)$ be a smooth complete toric variety.
Let $[C]\in\Mov_1(X)$. Let $x=x_{\sigma}$ be a torus invariant point.
Then the Seshadri constant is computed by one of the irreducible invariant divisors through $x_{\sigma}$. Specifically, $$\sh(C;x)=\min\bigl\{C\cdot D_{\tau}\st \tau\in\sigma(1)\bigr\}.$$
(A deformation argument shows that $\sh(C;x)=\min\bigl\{\frac{C\cdot D_{\tau}}{\mult_{x_{\sigma}}D_{\tau}}\st x_{\sigma}\in D_{\tau}\bigr\}$. All invariant divisors on a smooth toric variety are smooth.)
When $x$ is not a torus invariant point, the Sesadri constant is potentially bigger by the results above.

Note that when $C$ is effective, but not movable, there exists some $\tau$ such that $C\cdot D_{\tau}<0$. As in Remark \ref{rmk:nonmov}, we obtain $\sh(C;x)=-\infty$ for all $x\not\in D_{\tau}$. In particular the Seshadri constants are $-\infty$ on the dense torus $T$.

For an even more specific example, let $X$ be the blow-up of $\bb P^2$ at one point, and let $C=E$ be the exceptional divisor. The divisor $L=E$ is the only one that $C$ does not meet properly,
and we deduce that $\sh(C;x)=-1$ for all $x\in E$. By the arguments above, $\sh(C;x)=-\infty$ for all $x\in X\setminus E$. In particular $\sh(C;\cdot)$ may fail lower semi-continuity when $C$ is not movable.
\qed 
\end{ex}

\section{A Seshadri-type criterion}

\begin{proof}[Proof of Theorem A]One implication is provided by Lemma \ref{lem:uniformlowerbound}.
For the converse, assume first that $X$ is also smooth and $\varepsilon(C;x)\geq\varepsilon>0$ for some fixed $\varepsilon$ independent of $x\in X$. 
If $[C]$ is not an interior class, then by the duality result of \cite{bdpp13} (see \cite[Theorem 2.22]{fl13z} for the case of positive characteristic) there exists a pseudo-effective $\bb R$-Cartier $\bb R$-divisor $L$ with $[L]\neq 0$ such that $C\cdot L=0$. Let $L=P_{\sigma}(L)+N_{\sigma}(L)$
be the divisorial Zariski decomposition in the sense of \cite{nak} (or \cite{mustata}, \cite{chms14}, or \cite[Section 4]{fkl16} for the case of positive characteristic). We can assume that $[L]$ is extremal in $\Eff^1(X)$, and then either $L\equiv N_{\sigma}(L)$, or $L=P_{\sigma}(L)$.
If $L\equiv N_{\sigma}(L)$, then $L$ is effective (up to numerical equivalence). Choose $x$ in the support of some effective representative $L'$ of $[L]$. We obtain the contradiction $0=C\cdot L\geq\varepsilon\cdot\mult_xL'>0$.

We now treat the case when $L=P_{\sigma}(L)$ with $[L]\neq 0$.
By \cite[V.1.11. Theorem]{nak} (see \cite{chms14} for characteristic $p$), there exists $\beta>0$ and $A$ ample such that $$\dim_{\bb C}H^0\bigl(X,\cal O_X(\lfloor mL\rfloor+A)\bigr)\geq \beta\cdot m$$ 
for all $m$ sufficiently large. For any $D_m\in|\lfloor mL\rfloor+A|$ we have
$C\cdot(\lfloor mL\rfloor+A)\geq\varepsilon\mult_xD_m$ for any $x\in X$. Since $C\cdot L=0$, the left hand side is bounded independently of $m$. It follows that there exists some integer $B>0$ such that $B>\mult_xD_m$ for all $x\in X$, all sufficiently large $m$, and all $D_m\in|\lfloor mL\rfloor+A|$.
However, passing through a fixed smooth $x$ with multiplicity at least $B+1$ is a constant number of conditions
on any linear series on $X$. Since $|\lfloor mL\rfloor+A|$ has arbitrarily large dimension, for large $m$
we may find $D_m\in|\lfloor mL\rfloor+A|$ with $\mult_xD_m>B$. This is a contradiction.

\vskip.25cm
Consider now the case when $X$ is an arbitrary projective variety over $\bb C$. 
Let $\pi:\widetilde X\to X$ be a resolution of singularities of $X$. 
Let $H$ be a large ample on $X$. Since $\pi$ is birational, the divisor $\pi^*H$ is big,
so it can be written as $\pi^*H=A+E$ with $A$ ample on $\widetilde X$ and $E$ effective.
As before, there exists a pseudo-effective $\bb R$-Cartier $\bb R$-divisor $L$ on $X$
such that $C\cdot L=0$ and $[L]\neq 0$ in $N^1(X)$.

Assume that $P_{\sigma}(\pi^*L)$ is not numerically trivial. 
Up to replacing $H$ (so $A$ and $E$) by high multiples (see the proof of \cite[V.1.11. Theorem]{nak}), we may assume that $A$ and $P_{\sigma}(\pi^*L)$ are as in the smooth and movable case.
Fix $x$ in the smooth locus of $X$ such that $\pi$ is an isomorphism in a neighborhood of $x$, and denote $\tilde x=\pi^{-1}\{x\}$. Choose $D_m\in|\lfloor mP_{\sigma}(\pi^*L)\rfloor+A|$
such that $\lim_{m\to\infty}\mult_{\tilde x}D_m=\infty$. Note that
$$C\cdot H=C\cdot (mL+H)=C\cdot \pi_*\bigl(D_m+\langle mP_{\sigma}(\pi^*L)\rangle+mN_{\sigma}(\pi^*L)+E\bigr)\geq\varepsilon\cdot\mult_x\pi_*D_m=\varepsilon\cdot\mult_{\tilde x}D_m,$$
where by $\langle\cdot\rangle$ we denote the fractional part of a divisor, as in \cite[Section II.2.d]{nak}.
The second equality is true because the two divisors that we intersect with are linearly equivalent.
The inequality holds because $D_m+\langle mP_{\sigma}(\pi^*L)\rangle+mN_{\sigma}(\pi^*L)+E$
is a sum of effective $\bb R$-Weil divisors, and it stays so after pushforward. Furthermore the pushforward is an $\bb R$-Cartier $\bb R$-divisor linearly equivalent to $mL+H$. The last equality holds because
$\pi$ is an isomorphism above $x$. As $m$ grows we get a contradiction.

It remains to consider the case where $P_{\sigma}(\pi^*L)$ is numerically trivial. 
For all $m>0$ we have that $\frac 1mA+P_{\sigma}(\pi^*L)$ is an ample $\bb R$-divisor on $\widetilde X$,
in particular $\bb R$-linearly equivalent to an effective $\bb R$-divisor $F_m$. Since $[L]\neq 0$, it follows that $\pi_*N_{\sigma}(\pi^*L)$ is a nonzero effective Weil  $\bb R$-divisor on $X$. This is easily seen
by intersecting $L=\pi_*P_{\sigma}(\pi^*L)+\pi_*N_{\sigma}(\pi^*L)$ with $H^{n-1}$, where $n=\dim X$. Note that $L\cdot H^{n-1}>0$ since $[L]\neq 0$ is pseudo-effective, and $H^{n-1}$ is in the strict interior of $\Mov_1(X)$ (\cite[Lemma 3.9]{fl13z}).
Then 
$$\frac 1mC\cdot H=C\cdot\bigl(\frac 1mH+L\bigr)=C\cdot\pi_*\bigl(F_m+N_{\sigma}(\pi^*L)+\frac 1mE\bigr)\geq\varepsilon\cdot\mult_x\pi_*N_{\sigma}(\pi^*L)$$
for every $x$ in the support of $\pi_*N_{\sigma}(\pi^*L)$. As $m$ grows, we get a contradiction.

\vskip.25cm
Finally, we consider the case of fields of arbitrary characteristic. Instead of a resolution, consider 
$\pi:\widetilde X\to X$ a nonsingular alteration (\cite{dejong96}). 
For $H$ ample, $\pi^*H$ is big, so we can construct $A$ and $E$ as before. 
The proof goes through as above with minimal changes. In the projection formula there is a correction
by $\deg\pi$, e.g., $\deg\pi\cdot L=\pi_*\pi^*L$. The point $x$ is chosen in the regular locus of $X$
and in the finite locus of $\pi$, and $\tilde x$ is any point in $\pi^{-1}\{x\}$.
A bounding relation between $\mult_{\tilde x}D_m$ and $\mult_x\pi_*D_m$ is provided by
Lemma \ref{lem:multiplicitypush}.
\end{proof}

The following lemma provides some control for $\inf_{x\in X}\sh(C;x)$ under blow-ups of smooth varieties along smooth centers. We hope that it will inspire arguments following the steps of the MMP.

\begin{lem}Let $X$ be a smooth projective variety over $\bb C$, and let $C$ be a movable curve such that $\inf_{x\in X}\sh(C;x)\geq\varepsilon>0$. Let $Z\subset X$ be a smooth closed subvariety with ${\rm codim}_X\,(Z)\geq 2$,
and let $Y:=\bl_ZX$ with blow-down morphism $\pi$. Let $E$ be the exceptional divisor of $\pi$,
and let $\ell$ be a line in any fiber of $E\to Z$. Set $C':=\pi^*C-\frac{\varepsilon}2\ell$.
Then $C'$ is movable, $\pi_*C'=C$, and $\sh(C';y)\geq\frac{\varepsilon}2$ for all $y\in Y$.
Equality holds for $y\in E$.
\end{lem}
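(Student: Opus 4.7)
The plan is to reduce all four assertions to a single intersection identity on $Y$ combined with two applications of the hypothesis $\sh(C;\cdot)\geq\varepsilon$: at $x:=\pi(y)$, and at a general point $z\in Z$ (where $\mult_zL=\mult_ZL$). The key algebraic observation I would exploit is that every effective Cartier divisor $L'$ on $Y$ decomposes uniquely as $L'=\overline L+aE$, where $L:=\pi_*L'$ is effective Cartier on the smooth variety $X$ and $a\geq 0$. Setting $b:=\mult_ZL$, Remark \ref{rmk:mdZ} gives $\pi^*L=\overline L+bE$. Combining the projection formula (which in particular yields $\pi^*C\cdot E=C\cdot\pi_*E=0$ and $\ell\cdot\pi^*L=0$) with $\ell\cdot E=-1$, a direct expansion produces the identity
\[
C'\cdot L'=C\cdot L-\tfrac{\varepsilon}{2}b+\tfrac{\varepsilon}{2}a,
\]
while $\pi_*C'=C$ follows immediately from $\pi_*\pi^*C=C$ and $\pi_*\ell=0$.

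For the movability of $C'$, by the duality of \cite{bdpp13} and continuity of the intersection pairing it suffices to verify $C'\cdot L'\geq 0$ for every effective Cartier $L'$. Applying the hypothesis at a general $z\in Z$ gives $C\cdot L\geq\varepsilon b$, and the displayed identity then yields $C'\cdot L'\geq\tfrac{\varepsilon}{2}(a+b)\geq 0$.

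For the Seshadri bound $\sh(C';y)\geq\varepsilon/2$, I would average the two hypothesis inequalities $C\cdot L\geq\varepsilon b$ and $C\cdot L\geq\varepsilon\mult_xL$ to get $C\cdot L\geq\tfrac{\varepsilon}{2}(\mult_xL+b)$, so the identity gives $C'\cdot L'\geq\tfrac{\varepsilon}{2}(\mult_xL+a)$. On the multiplicity side, additivity of multiplicity on Cartier divisors gives $\mult_yL'=\mult_y\overline L+a\cdot\mult_yE$. If $y\notin E$, this equals $\mult_xL\leq\mult_xL+a$. If $y\in E$, then $\mult_yE=1$, and I would invoke the classical inequality $\mult_y\overline L\leq\mult_{\pi(y)}L$ for strict transforms under smooth-center blowups of smooth varieties, giving $\mult_yL'\leq\mult_xL+a$ again. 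Dividing yields $\sh(C';y)\geq\varepsilon/2$. The equality for $y\in E$ is witnessed by the test divisor $L'=E$, for which $L=0$, $a=1$, $b=0$, so $C'\cdot E=\varepsilon/2$ and $\mult_yE=1$.

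The one nontrivial ingredient I anticipate is the multiplicity bound $\mult_y\overline L\leq\mult_{\pi(y)}L$. This is standard in resolution of singularities (smooth-center blowups of smooth varieties never increase the multiplicity of strict transforms of hypersurfaces), but it is the sole place where smoothness of $Z$ is used beyond the formal construction of $\pi^*$.
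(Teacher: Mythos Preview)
Your argument is correct up to the last step, but the ``classical inequality'' $\mult_y\overline L\leq\mult_{\pi(y)}L$ that you invoke for $y\in E$ is \emph{false} when $\dim Z>0$. In resolution of singularities the non-increase of multiplicity under blow-up requires the center to be \emph{permissible}, e.g.\ contained in the equimultiple locus of the divisor; here $Z$ is an arbitrary smooth subvariety unrelated to $L$. A concrete counterexample: in $\bb A^3$ blow up the line $Z=V(x_1,x_2)$ and take $L=V(x_2+x_3^3)$. Then $\mult_ZL=0$, $\mult_{(0,0,0)}L=1$, but the strict transform $\overline L=V(y_1y_2+y_3^3)$ has $\mult_{(0,0,0)}\overline L=2$. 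With this $L$ your bound $\mult_yL'\leq\mult_xL+a$ becomes $2\leq 1$, and the whole chain collapses.

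What actually survives is the weaker (but sufficient) inequality
\[
\mult_y\overline L\ \leq\ 2\,\mult_xL-\mult_ZL,
\]
provable by a direct local computation in blow-up coordinates: a monomial $x_1^{a_1}\cdots x_n^{a_n}$ in a defining equation of $L$ contributes $y_1^{a_1+\cdots+a_d-\mult_ZL}y_2^{a_2}\cdots y_n^{a_n}$ to $\overline L$, of degree $\sum a_i+(a_2+\cdots+a_d)-\mult_ZL\leq 2\sum a_i-\mult_ZL$. Feeding this into your displayed identity with the \emph{single} hypothesis $C\cdot L\geq\varepsilon\,\mult_xL$ (rather than averaging) gives
\[
C'\cdot\overline L\ \geq\ \varepsilon\,\mult_xL-\tfrac{\varepsilon}{2}\mult_ZL\ =\ \tfrac{\varepsilon}{2}\bigl(2\,\mult_xL-\mult_ZL\bigr)\ \geq\ \tfrac{\varepsilon}{2}\,\mult_y\overline L,
\]
and then adding the $aE$ contribution finishes as in your proof. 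The rest of your argument (movability, $\pi_*C'=C$, the case $y\notin E$, and equality on $E$ via the test divisor $E$) is fine.
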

\begin{proof}It is clear that $\pi_*C'=C$. 
We want to prove the Seshadri inequality $C'\cdot L\geq\frac{\varepsilon}2\mult_yL$ 
for all irreducible divisors $L$ on $Y$ and for all $y\in Y$. If this holds, then in particular $C'\cdot L\geq 0$ for all effective divisors on $Y$, hence $C'$ is movable.
Consider first the case where $Z=\{z\}$ is a point. 
Let $y\in Y$ with $\pi(y)=:x\neq z$. Let $\overline D$ be an irreducible divisor on $Y$ containing $y$.
It follows that $\overline D\neq E$, hence it is the strict transform of a divisor $D$ on $X$.
We have $$C'\cdot\overline D=C\cdot D-\frac{\varepsilon}2\mult_zD\geq C\cdot D-\frac 12C\cdot D\geq\frac{\varepsilon}2\mult_xD=\frac{\varepsilon}2\mult_y\overline D,$$
since $\pi$ is an isomorphism around $x\neq z$.
Let now $y\in E$.
If $\overline D$ is the strict transform of an effective divisor $D$ on $X$, then 
$$C'\cdot\overline D=C\cdot D-\frac{\varepsilon}2\mult_zD\geq \bigl(\sh(C;x)-\frac{\varepsilon}2\bigr)\cdot\mult_zD\geq \frac{\varepsilon}2\mult_zD.$$
From Lemma \ref{lem:multiplicitypush}, we have $\mult_zD\geq\mult_y\overline D$. 
Furthermore $$C'\cdot E=\frac{\varepsilon}2=\frac{\varepsilon}2\mult_yE$$ for all $y\in E$, since $E$ is smooth.

In the general case, let $y\not\in E$ and let $\overline D$ be an irreducible divisor containing $y$. Set $x:=\pi(y)$. As above, $\overline D$ is the strict transform of an irreducible divisor $D$ on $X$. Furthermore
$$C'\cdot\overline D=C\cdot D-\frac{\varepsilon}2\mult_ZD= C\cdot D-\frac{\varepsilon}2\mult_zD\geq C\cdot D-\frac 12C\cdot D=\frac 12C\cdot D\geq\frac{\varepsilon}2\mult_xD=\frac{\varepsilon}2\mult_y\overline D,$$
since $\pi$ is an isomorphism around $x$. Here $z$ is a general point of $Z$ so that $\mult_ZD=\mult_zD$. 

Let now $y\in E$ so $x:=\pi(y)\in Z$. If $\overline D$ is the strict transform of an effective divisor $D$ on $X$, then
$$C'\cdot\overline D=C\cdot D-\frac{\varepsilon}2\mult_ZD\geq \frac{\varepsilon}2\cdot\bigl(2\mult_x D-\mult_ZD\bigr).$$
We show that 
\begin{equation}\label{eq:multstrict}2\mult_xD-\mult_ZD\geq\mult_y\overline D\end{equation} 
This is a local computation.
In a small analytic neighborhood of $y$ we have that $\pi$ is given by 
$$(y_1,y_2,\ldots,y_n)\mapsto(y_1,y_1y_2,\ldots,y_1y_d,y_{d+1},\ldots,y_n),$$ where $d$ is the codimension of $Z$, and $Z$ is given by $x_1=\ldots=x_d=0$. A local equation for $D$ around $x$ is 
$$f(x_1,\ldots,x_n)=f_m(x_1,\ldots,x_n)+f_{m+1}(x_1,\ldots,x_n)+\cdots,$$
where $f_i$ is homogeneous of degree $i$ and $m:=\mult_xD$. Then a local equation for
$\overline D$ around $y$ is 
$$\frac{f_m(y_1,y_1y_2,\ldots,y_1y_d,y_{d+1},\ldots,y_n)}{y_1^{\mult_ZD}}+\frac{f_{m+1}(y_1,y_1y_2,\ldots,y_1y_d,y_{d+1},\ldots,y_n)}{y_1^{\mult_ZD}}+\cdots$$
If a monomial $x_1^{a_1}\cdot\ldots\cdot x_n^{a_n}$ appears in the equation of $D$,
then $y_1^{a_1+\ldots+a_d-\mult_ZD}\cdot y_2^{a_2}\cdot\ldots\cdot y_n^{a_n}$
appears in the equation of $\overline D$. The correspondence is reversible, hence one-to-one. 
The degree is $a_1+2(a_2+\ldots+a_d)+a_{d+1}+\ldots+a_n-\mult_ZD$. Since $f_m\neq 0$, at least one of these has degree at most $2m-\mult_ZD$ as desired. 

It remains to verify the Seshadri inequality for the divisor $E$ and a point $y\in E$. Using that $E$ is smooth, 
$$C'\cdot E=\frac{\varepsilon}2=\frac{\varepsilon}2\mult_yE.$$
\end{proof}

\begin{rmk}It is tempting to approach \eqref{eq:multstrict} by proving $\mult_xD\geq\mult_y\overline D$ and then using $\mult_xD\geq\mult_ZD$. 
While the first inequality is true if $Z$ is a point, unfortunately blow-ups may increase multiplicity when the dimension of $Z$ is positive.
Consider in $\bb A^3_{\bb C}$ with coordinates $(x_1,x_2,x_3)$ the blow-up of the line $Z:=V(x_1,x_2)$
and the divisor $D$ of equation $x_2+x_3^3=0$. 
It has multiplicity $1$ at $x=(0,0,0)$, and multiplicity $0$ along $Z$.
Its strict transform $\overline D$ agrees with the pullback, and has equation $y_1y_2+y_3^3=0$.
It has multiplicity $2$ at $y=(0,0,0)$.  
\end{rmk}

There is also some control on pushforwards.

\begin{lem}Let $\pi:X\to Y$ be a dominant morphism of projective varieties over an algebraically closed field. Let $C\in\Mov_1(X)$. Then $\sh(\pi_*C;\pi(x))\geq\sh(C;x)$ for all $x\in X$ such that $x$ and $\pi(x)$ are smooth.
\end{lem}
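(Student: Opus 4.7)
The plan is to compare Seshadri constants by pulling test divisors from $Y$ back to $X$ via $\pi$. Set $y=\pi(x)$. To prove $\sh(\pi_*C;y)\geq\sh(C;x)$, it suffices to show that every test ratio $\frac{\pi_*C\cdot M}{\mult_yM}$ coming from an effective Cartier divisor $M$ on $Y$ through $y$ is bounded below by $\sh(C;x)$. The natural candidate on the $X$ side is $\pi^*M$: since $\pi$ is dominant, $\pi(X)\not\subseteq\Supp M$, so $\pi^*M$ is a bona fide effective Cartier divisor on $X$, and $x\in\pi^{-1}(\Supp M)=\Supp\pi^*M$, hence it is an admissible test divisor for $\sh(C;x)$.

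Two ingredients then make the comparison go through. The first is the projection formula $\pi_*C\cdot M=C\cdot\pi^*M$, which rewrites the numerator on $Y$ as an intersection number on $X$. The second is the pullback multiplicity inequality
\[ \mult_x\pi^*M\geq\mult_yM. \]
This is where I would use the hypothesis that both $x$ and $y$ are smooth: $M$ has a local equation $f\in\frak m_y^{\mult_yM}\subset\cal O_{Y,y}$, and the induced local homomorphism $\cal O_{Y,y}\to\cal O_{X,x}$ sends $\frak m_y$ into $\frak m_x$, hence the pulled-back local equation lies in $\frak m_x^{\mult_yM}$. Smoothness of $x$ ensures that the order of vanishing in this filtration is exactly $\mult_x\pi^*M$.

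Combining the two ingredients, for any such $M$ I obtain
\[ \frac{\pi_*C\cdot M}{\mult_yM}=\frac{C\cdot\pi^*M}{\mult_yM}\geq\frac{C\cdot\pi^*M}{\mult_x\pi^*M}\geq\sh(C;x), \]
where the first inequality uses $C\cdot\pi^*M\geq 0$ (from $C\in\Mov_1(X)$ and $\pi^*M$ effective), and the second is the definition of $\sh(C;x)$ applied to the effective Cartier divisor $\pi^*M$ through the smooth point $x$. Taking the infimum over $M$ yields the lemma. The only nontrivial ingredient is the multiplicity inequality under pullback; everything else is formal from the projection formula and the definition of $\sh$. Unlike the proof of Theorem A, no alteration argument is needed here because we are pulling back divisors rather than cycles, and the dominance of $\pi$ is exactly what is required for this pullback to be well-defined and effective.
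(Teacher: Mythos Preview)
Your proof is correct and follows essentially the same approach as the paper's: pull back an effective Cartier divisor $D$ through $y=\pi(x)$, use the projection formula $\pi_*C\cdot D=C\cdot\pi^*D$, invoke the local multiplicity inequality $\mult_x\pi^*D\geq\mult_yD$, and conclude by taking the infimum. You supply more detail than the paper does (the local-ring justification of the multiplicity inequality, and the observation that $C\cdot\pi^*M\geq 0$ is needed for the first inequality), but the argument is the same.
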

\begin{proof}Let $D$ be an effective Cartier divisor through $y:=\pi(x)$.
It is an immediate local computation that $\mult_{y}D\leq\mult_x\pi^*D$.
Then $\frac{\pi_*C\cdot D}{\mult_yD}\geq\frac{C\cdot\pi^*D}{\mult_x\pi^*D}\geq\sh(C;x)$.
\end{proof}

We can also bound the difference  
$\sh(\pi_*C;\pi(x))-\sh(C;x)$ from above for smooth blow-ups.

\begin{lem}Let $X$ be a complex projective manifold, and let $Z\subset X$ be a smooth closed 
subvariety with ${\rm codim}_X(Z)\geq 2$.
Its blow-up $\pi:\bl_ZX\to X$ has exceptional divisor $E$.
Let $x\in X$ and $y\in\pi^{-1}x$ be closed points.
Consider $C\in\Mov_1(\bl_ZX)$.
Set $R_y:=\liminf_{\delta\to 0^+}\frac{\mult_Z\pi_*\overline D_{\delta}}{\mult_x\pi_*\overline D_{\delta}}$, where $\overline D_{\delta}$ ranges through the set of irreducible divisors through $y$ such that
$\frac{C\cdot\overline D_{\delta}}{\mult_y\overline D_{\delta}}<\sh(C;y)+\delta$. 
Then 
$$\sh(C;y)+C\cdotp E \geq 2\cdotp\sh(C;y)+(C\cdotp E-\sh(C;y))\cdotp R_y\geq  \sh(\pi_*C;x) \geq \sh(C;y)$$
if $x\in Z$ and $C\cdotp E>\sh(C;y)$, and
$$\sh(C;y)+(C\cdotp E)\cdotp R_y \geq  \sh(\pi_*C;x) \geq  \sh(C;y),$$
if $x\not\in Z$. 
\end{lem}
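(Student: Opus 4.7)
The plan is to read the lower bound off the preceding pushforward lemma and to establish both upper bounds by pushing forward divisors near the infimum that defines $\sh(C;y)$. The lower bound $\sh(\pi_*C;x)\geq\sh(C;y)$ is exactly the inequality of the previous lemma applied to the dominant morphism $\pi:\bl_ZX\to X$, since $y$ and $x=\pi(y)$ are smooth (as $X$, $Z$, and $\bl_ZX$ are smooth).

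For the upper bounds my plan is the following. By Remark \ref{rmk:irredenough}, for every $\delta>0$ I pick an irreducible divisor $\overline D_\delta\subset\bl_ZX$ through $y$ with $C\cdotp\overline D_\delta<(\sh(C;y)+\delta)\cdotp\mult_y\overline D_\delta$. The strict hypothesis $C\cdotp E>\sh(C;y)$ forces $\overline D_\delta\neq E$ for small $\delta$, so $\overline D_\delta$ is the strict transform of $D_\delta:=\pi_*\overline D_\delta$, an effective Cartier divisor through $x$ on the smooth variety $X$. Combining $\pi^*D_\delta=\overline D_\delta+(\mult_ZD_\delta)E$ from Remark \ref{rmk:mdZ} with the projection formula gives $\pi_*C\cdotp D_\delta=C\cdotp\overline D_\delta+(\mult_ZD_\delta)(C\cdotp E)$. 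Setting $a:=\mult_y\overline D_\delta$, $b:=\mult_ZD_\delta$, $c:=\mult_xD_\delta$, the definition of Seshadri constants for curves then yields
$$\sh(\pi_*C;x)\leq\frac{\pi_*C\cdotp D_\delta}{c}<(\sh(C;y)+\delta)\cdotp\tfrac{a}{c}+\tfrac{b}{c}(C\cdotp E).$$

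The argument then splits on whether $x\in Z$. If $x\notin Z$, then $y\notin E$, $\pi$ is a local isomorphism at $y$, so $a=c$, and $\liminf_{\delta\to 0^+}$ along the family defining $R_y$ yields the claimed bound $\sh(C;y)+(C\cdotp E)R_y$. If $x\in Z$, the multiplicity estimate $a\leq 2c-b$ established in the preceding blow-up lemma (inequality \eqref{eq:multstrict}) applies; since $C\cdotp E-(\sh(C;y)+\delta)>0$ for small $\delta$, substituting this upper bound on $a$ into the above display gives
$$\sh(\pi_*C;x)<2(\sh(C;y)+\delta)+\tfrac{b}{c}\bigl(C\cdotp E-(\sh(C;y)+\delta)\bigr),$$
and $\liminf$ produces the middle upper bound $2\sh(C;y)+(C\cdotp E-\sh(C;y))R_y$. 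The outermost inequality reduces to $R_y\leq 1$, which holds because for every $D_\delta$ one has $b\leq c$ whenever $x\in Z$, by upper semicontinuity of multiplicity along the irreducible subvariety $Z$.

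I expect the main obstacle to be the case $x\in Z$: the strict hypothesis $C\cdotp E>\sh(C;y)$ must be used twice, once to exclude $\overline D_\delta=E$ from the limiting family and once to substitute $a\leq 2c-b$ with the correct sign into the Seshadri ratio, and both uses are essential. Elsewhere the argument is routine bookkeeping combining \eqref{eq:multstrict}, the strict transform formula from Remark \ref{rmk:mdZ}, and the projection formula for $\pi$.
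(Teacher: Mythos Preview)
Your proposal is correct and follows essentially the same route as the paper: the lower bound is deferred to the preceding pushforward lemma, and the upper bounds are obtained by pushing forward near-optimal irreducible divisors $\overline D_\delta$, using $\pi^*D_\delta=\overline D_\delta+(\mult_ZD_\delta)E$, the projection formula, and (when $x\in Z$) the multiplicity estimate \eqref{eq:multstrict}. Your explicit reduction of the outermost inequality to $R_y\leq 1$ via upper semicontinuity of multiplicity along $Z$ is a nice clarification the paper leaves implicit; the only expositional slip is that the sentence ``the strict hypothesis $C\cdot E>\sh(C;y)$ forces $\overline D_\delta\neq E$'' is only needed (and only assumed) in the case $x\in Z$, while for $x\notin Z$ the exclusion $\overline D_\delta\neq E$ follows automatically from $y\notin E$.
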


\begin{proof}Only the upper bounds on $\sh(\pi_*C;x)$ need further justification.
For $\overline D_{\delta}$ as above, denote $D_{\delta}:=\pi_*\overline D_{\delta}$, such that $\pi^*D_{\delta}=\overline D_{\delta}+\mult_ZD_{\delta}\cdot E$. When $y\in E$, the assumption $C\cdotp E>\sh(C;y)$ shows that $\overline D_{\delta}\neq E$ for $\delta$ sufficiently small. 
In particular $D_{\delta}\neq 0$ and $\mult_xD_{\delta}>0$.
$$\sh(\pi_*C;x)\leq\frac{\pi_*C\cdotp D_{\delta}}{\mult_xD_{\delta}}=\frac{C\cdotp\overline D_{\delta}+\mult_ZD_{\delta}\cdot(C\cdotp E)}{\mult_xD_{\delta}}<\frac{\mult_y\overline D_{\delta}\cdot(\sh(C;y)+\delta)+\mult_ZD_{\delta}\cdot(C\cdotp E)}{\mult_xD_{\delta}}.$$
If $x\in Z$, then using \eqref{eq:multstrict}, further upper bounds are
$$\frac{2\mult_xD_{\delta}\cdot(\sh(C;y)+\delta)+\mult_ZD_{\delta}\cdot(C\cdotp E-\sh(C;y)-\delta)}{\mult_xD_{\delta}}\leq 2\cdot(\sh(C;y)+\delta)+(C\cdotp E-\sh(C;y)).$$
We get the desired upper bounds by letting $\delta$ tend to $0$.
If $x\not\in Z$, then $x$ is identified with $y$, and 
$\mult_y\overline D_{\delta}=\mult_xD_{\delta}$.
Again we get the desired upper bound by letting $\delta$ tend to $0$.
\end{proof}

We exhibit a curve class $C$ where 
$\sh(C;y)+(C\cdotp E)\cdotp R_y =  \sh(\pi_*C;x) >  \sh(C;y)$
for general $y$. This shows that the previous lemma is sharp.

\begin{ex}Linear projection from $x\in G(2,4)\subset\bb P^5$ gives a birational map $p:G(2,4)\dashrightarrow\bb P^4$ defined outside $x$.
It contracts all lines in $G(2,4)$ that pass through $x$, and it can be resolved by blowing-up $x$.
We obtain a diagram
$$\xymatrix{\bl_xG(2,4)\ar[d]_{\sigma}\ar@{=}[r]&\bl_Q\bb P^4\ar[d]^{\pi} \\
G(2,4)\ar@{-->}[r]_p& \bb P^4}
$$
where $Q=\bb P^4\cap\bb T_xG(2,4)\cap G(2,4)$ is a quadric surface.
Denote by $h$ the class of a linear hyperplane in $\bb P^4$ and by $\xi$ the class of a Pl\" ucker hyperplane on $G(2,4)\subset\bb P^5$.
Denote by $\Lambda=\frac 12\xi^3$ the class of a line in $G(2,4)$, by $L=h^3$ the class of a line in $\bb P^4$, 
by $\ell$ the class of a line in the exceptional divisor $E\simeq\bb P^3$ of $\sigma$,
and by $f$ the class of a line inside the exceptional divisor $F\simeq\bb P_Q\bigl(N^{\vee}_Q\bb P^{4}\bigr)$ of $\pi$.

We have relations $\pi^*h=\sigma^*\xi-E$ and $F=\sigma^*\xi-2E$. Similarly $\pi^*L=2\sigma^*\Lambda-\ell$ and $f=\sigma^*\Lambda-\ell$. 
From these, we deduce $\pi_*\sigma^*\Lambda=L$ and $\sigma_*\pi^*L=2\Lambda$.

We have $\sh(\sigma^*\Lambda;y)=\left\{\begin{array}{cc}\frac 12&\mbox{, if }y\not\in E\\ 0&\mbox{, if }y\in E\end{array}\right.$ (see Example \ref{G(2,4)}),
and $\sh(\pi^*L;y)=\left\{\begin{array}{cc}1&\mbox{, if }y\not\in F\\ 0&\mbox{, if }y\in F\end{array}\right.$.

We see that $\sh(\pi_*\sigma^*\Lambda;\pi(y))>\sh(\sigma^*\Lambda;y)$ for all $y\not\in F$.
We have $\sigma^*\Lambda\cdot F=1$, hence $R_y\geq\frac 12$. 
In fact equality holds for $y\not\in E\cup F$. 
The Seshadri constant of $\sigma^*\Lambda$ at $y$ is computed by $\overline D_{\delta}$, the strict transform on $\bl_Q\bb P^4=\bl_xG(2,4)$ of 
$\bb T_{\sigma(y)}G(2,4)\cap G(2,4)$.
It is easy to compute that $\mult_Q\pi(\overline D_{\delta})=1$ and $\mult_{\pi(y)}\pi(\overline D_{\delta})=2$.\qed
\end{ex}

\section{Null locus}

\begin{thrm}[A characterization of the Null locus]
\label{thm:nulllocus}Let $X$ be \underline{\emph{smooth}} projective over an algebraically closed field. 
Let $C\in\Mov_1(X)$ and assume there exists $x_0\in X$ such that $\sh(C;x_0)>0$. 
\begin{enumerate}[i)]
\item If $[L]\in\Eff^1(X)$ satisfies $C\cdot L=0$, then $L\equiv N_{\sigma}(L)$, where $N_{\sigma}(L)$ denotes the negative part in the divisorial Zariski decomposition of \cite{nak}.
Furthermore the cone $$C^{\perp}\cap\Eff^1(X):=\bigl\{[L]\in\Eff^1(X)\st C\cdot L=0\bigr\}$$ is simplicial, generated by the classes of 
finitely many effective irreducible divisors $L_1,\ldots,L_r$ that do not pass through $x_0$. 
As a cycle, any effective $\bb R$-Cartier $\bb R$-divisor $L$ whose class is in this cone is
necessarily a nonnegative linear combination of the $L_i$.
\item The class $C$ is \emph{big}, that is in the strict interior of the Mori cone of curves $\Eff_1(X)\subset N_1(X)$.
In fact it is in the strict interior of the possibly smaller dual cone $\Mov^1(X)^{\vee}$. \footnote{Recall that the \emph{movable cone of divisors} $\Mov^1(X)\subset N^1(X)$ is the closure of the convex cone generated by classes of Cartier divisors in linear series without fixed divisorial components.}
\item The ``null locus'' ${\rm Null}\, (C):=L_1\cup\ldots\cup L_r$ coincides with the set
$\bigl\{x\in X\st \sh(C;x)=0\bigr\}$. In particular the latter is Zariski closed in $X$.
\end{enumerate}
\end{thrm}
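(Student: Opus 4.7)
My plan is to treat the three items in the stated order, exploiting the divisorial Zariski decomposition and the multiplicity-growth estimate from the proof of Theorem A applied at the single point $x_0$.

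For the first half of (i), I take $[L]\in\Eff^1(X)$ with $C\cdot L=0$ and write the divisorial Zariski decomposition $L\equiv P_\sigma(L)+N_\sigma(L)$. Both $C\cdot P_\sigma(L)$ and $C\cdot N_\sigma(L)$ are nonnegative (since $P_\sigma(L)\in\Mov^1(X)\subseteq\Eff^1(X)$ and $N_\sigma(L)$ is effective), and both must vanish. If $P_\sigma(L)$ were nonzero, Nakayama's volume-type growth estimate $h^0(X,\cal O_X(\lfloor mP_\sigma(L)\rfloor+A))\geq\beta m$ would produce members of this linear series with arbitrarily large multiplicity at $x_0$, contradicting $\sh(C;x_0)>0$ together with the boundedness of $C\cdot(\lfloor mP_\sigma(L)\rfloor+A)$ (which comes from $C\cdot P_\sigma(L)=0$). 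Hence $L\equiv N_\sigma(L)=\sum_i a_iL_i$ with each $L_i$ prime, $C\cdot L_i=0$, and $x_0\notin L_i$ (else $\sh(C;x_0)\leq C\cdot L_i/\mult_{x_0}L_i=0$).

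The remaining parts of (i) and all of (ii) follow from one auxiliary claim: \emph{if $[P]\in\Eff^1(X)$ satisfies $C\cdot P=0$ and admits two effective $\bb R$-divisor representatives of disjoint supports, then $[P]=0$}. The disjoint-support representatives force $N_\sigma([P])=0$ (component-wise), so $[P]\in\Mov^1(X)$, and the first half of (i) gives $P\equiv N_\sigma(P)=0$; intersecting an effective representative with $H^{n-1}$ for ample $H$ then shows this representative must vanish as an $\bb R$-cycle. Applying the claim twice: (a) every prime $M$ with $C\cdot M=0$ equals some $L_i$, because otherwise $M$ and $N_\sigma(M)=\sum a_jL_j$ from the first half would be disjoint-support representatives of $[M]$, forcing $M\equiv 0$, impossible; (b) the $[L_i]$ are linearly independent in $N^1(X)$, since splitting a nontrivial relation into positive and negative coefficient parts produces two disjoint-support representatives of a common class. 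Finiteness (at most $\dim N^1(X)$) and simpliciality of $C^\perp\cap\Eff^1(X)$ follow from (a), (b), and the first half of (i). For (ii), any $[L]\in\Mov^1(X)$ with $C\cdot L=0$ satisfies $L\equiv N_\sigma(L)=0$ by (i) first half, so $C$ pairs strictly positively with every nonzero class of $\Mov^1(X)$, placing $[C]$ in the strict interior of $\Mov^1(X)^\vee$; since $\Nef^1(X)\subseteq\Mov^1(X)$ dualizes to $\Mov^1(X)^\vee\subseteq\Eff_1(X)$, this also puts $[C]$ in the strict interior of the Mori cone.

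For (iii), the inclusion $\Null(C)\subseteq\{x:\sh(C;x)=0\}$ is immediate. For the converse, suppose $\sh(C;x)=0$ while $x\notin\Null(C)$, toward a contradiction. Pull from the defining infimum a sequence of prime effective Cartier divisors $D_n\ni x$ with $C\cdot D_n/\mult_xD_n\to 0$. If some $C\cdot D_n=0$, then $D_n$ equals an $L_i$ by (a), giving $x\in L_i$, a contradiction. Otherwise, I would extract a limit effective $\bb R$-Cartier $\bb R$-divisor $D$ with $[D]\in C^\perp\cap\Eff^1(X)$ and $x\in\Supp D$ via a compactness argument, and then invoke the last sentence of (i) to write $D=\sum a_iL_i$ as an $\bb R$-cycle, forcing $x\in\Supp D\subseteq\Null(C)$. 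The main obstacle is this compactness/limit step: naive normalizations lose either the cycle-level multiplicity at $x$ (normalizing by $H^{n-1}$-degree) or the boundedness of degree (normalizing by $\mult_xD_n$), so one must decompose each $D_n$ via divisorial Zariski decomposition and handle the movable and negative parts separately, exploiting the uniform lower bound $C\cdot P\geq\delta\|P\|$ on $\Mov^1(X)$ from (ii) together with the finiteness of the list from (a) to keep $x$ in the support of the limit.
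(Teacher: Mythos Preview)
Your arguments for (i) and (ii) are correct and close in spirit to the paper's. The paper establishes $L\equiv N_\sigma(L)$ by the same multiplicity-growth contradiction at $x_0$, and then obtains linear independence of the prime divisors in $C^\perp$ by quoting \cite[III.1.10 Proposition]{nak} directly, whereas you deduce it from your ``disjoint-support'' auxiliary claim. These are essentially the same mechanism (your claim is a repackaging of the minimality of $N_\sigma$), so there is no substantive difference here. One cosmetic point: your item (a) is stated confusingly --- as written it is either tautological or circular depending on how the $L_i$ are defined --- but it is not actually needed, since (b) alone gives linear independence, hence finiteness, and the fact that any effective $L$ with $C\cdot L=0$ has all its components among the $L_i$ is immediate from $C\cdot(\text{component})\geq 0$.

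For (iii), however, you have correctly located the difficulty but not resolved it, and your sketched fix does not supply the missing idea. After normalizing the prime divisors $D_j$ so that $H^{n-1}\cdot D_j=1$, B\'ezout gives $\mult_x D_j\leq 1$, hence $C\cdot D_j\to 0$, and compactness in $N^1(X)$ gives a subsequential limit $[D]\neq 0$ with $C\cdot D=0$. The problem, as you note, is that $[D]$ is only a numerical class and carries no support. Decomposing $[D_j]=P_\sigma+N_\sigma$ and using the interior bound from (ii) does show $P_\sigma([D_j])\to 0$, but since $\Supp D_j$ is irreducible this only yields $N_\sigma([D_j])=c_j D_j$ with $c_j\to 1$, which still does not identify $\Supp D_j$ with any $L_i$. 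The paper's key step is the \emph{lower semicontinuity} of the asymptotic valuation $\sigma_E$ on $\Eff^1(X)$ (\cite[III.1.7 Lemma]{nak}): from (i) one has $N_\sigma([D])=\sum a_i L_i$ with some $a_i>0$, so $\sigma_{L_i}([D])>0$, and lower semicontinuity forces $\sigma_{L_i}([D_j])>0$ for large $j$. Since $\sigma_{L_i}([D_j])\leq\mult_{L_i}D_j$ and $\Supp D_j$ is irreducible, this pins down $\Supp D_j=L_i$, whence $x\in L_i\subseteq\Null(C)$. This semicontinuity input is the step your outline is missing.
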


\begin{proof}$i)$. Note that $\sh(C;x_0)>0$ implies $C\neq 0$.
If $[L]\in\Eff^1(X)$ verifies $C\cdot L=0$, then $C\cdot P_{\sigma}(L)=0$.
If $P_{\sigma}(L)\not\equiv 0$, then the multiplicity arguments in the proof of Theorem A
applied to the point $x_0$ contradict $\sh(C;x_0)>0$. We conclude that $L\equiv N_{\sigma}(L)$. In particular $L$ is numerically equivalent to an effective divisor.
For any effective $L$ with $[L]$ in $C^{\perp}\cap\Eff^1(X)$, the irreducible components of $\Supp L$ are also in the cone,
and \cite[III.1.10. Proposition]{nak} proves that the numerical classes of these components are linearly independent
in $N^1(X)$.
Furthermore, no component of $\Supp L$ may pass through $x_0$ because $\sh(C;x_0)>0$.
Choose finitely many effective divisors $G_j$ whose classes span
the subspace $V$ generated by $C^{\perp}\cap\Eff^1(X)$ in $N^1(X)$.
Let $L_i$ be the finite set of irreducible divisors that appear as components of any of the $G_j$.
Clearly $[L_i]$ also generate $V$.
By looking at $\sum_jG_j$, we see that the irreducible divisors above have classes that are linearly independent in $N^1(X)$. In particular these classes form a basis of $V$. If $E$ is any effective divisor with $[E]\in C^{\perp}\cap\Eff^1(X)$,
let $E_0$ denote any of the irreducible components of its support.
By looking at $\sum_iL_i+E_0$, we deduce that $E_0$ is one of the $L_i$, or else 
$L_1,\ldots,L_r,\ E_0$ also have linearly independent classes.

$ii)$. If $C$ is not big or in the strict interior of $\Mov^1(X)^{\vee}$, 
then there exists some nef/movable divisor $L$ with $C\cdot L=0$ and $[L]\neq 0$.
Since nef divisors are movable, in both cases we have $L=P_{\sigma}(L)$.
Using $i)$, we find the contradiction $[L]=0$.

$iii)$. If $x\in L_i$ for some $i$, then $\sh(C;x)=0$ since $C\cdot L_i=0$ and $\mult_xL_i>0$.
Conversely, assume $\sh(C;x)=0$. Then there exists a sequence $D_j$ of effective divisors with
irreducible supports such that $\lim_{j\to\infty}\frac{C\cdot D_j}{\mult_xD_j}=0$ and $x\in\Supp D_j$ for all $j$.
Up to rescaling, we may assume that $H^{n-1}\cdot D_j=1$ for some very ample divisor $H$ on $X$, where $n:=\dim X$.
By B\' ezout, $\mult_xD_j\leq H^{n-1}\cdot D_j=1$. By \cite[Theorem 1.4.(3)]{flpos}, the sequence $[D_j]\in N^1(X)$ is bounded, so up to passing to a subsequence, we may assume that $\lim_{j\to\infty}[D_j]=[D]$ for some $D\in\Eff^1(X)$. Furthermore $H^{n-1}\cdot D=1$, so $[D]\neq 0$. From the bound on the multiplicity of $D_j$, we also deduce $C\cdot D=0$.

By part $i)$, we see that $D\not\equiv P_{\sigma}(D)$. Thus there exists some irreducible divisor $E$ (in fact one of the $L_i$) on $X$ with associated
valuation $\sigma_E$ such that $\sigma_E(D)>0$. By the lower semi-continuity of $\sigma_E$ (cf. \cite[III 1.7.(1) Lemma]{nak}), we have $\sigma_E(D_j)>0$ for large $j$ (after maybe passing to a subsequence). Since $\Supp D_j$ is irreducible, it follows that $\Supp D_j=E$. Using $D_j\cdot H^{n-1}=1$ and the irreducibility of $E$, we find that $D_j$ is an eventually constant sequence, so $[D]=[D_j]$ for large $j$. 
By part $i)$ we deduce that $D_j$ is a nonnegative linear combination of $L_i$
and in particular that $x\in \Supp D_j$ is contained in ${\rm Null}\, (C)$.
\end{proof}

\begin{rmk}One can also use the work of \cite{lxpos} as explained in Remark \ref{rmk:lx}
to prove that if $\sh(C;x_0)>0$, and $C\cdot L=0$ for $[L]\in\Eff^1(X)$, 
then $L\equiv N_{\sigma}(L)$ and $[C]$ is in the strict interior of $\Mov^1(X)^{\vee}$, in particular it is big.

Moreover, \cite{lxpos} prove that there exists a movable divisor $P$ (unique up to numerical equivalence) such that $C$
agrees with the positive product $\langle P^{n-1}\rangle$ of \cite{bfj}.
Then \cite{mx17} show that  
${\rm Null}\, (C)={\bf B}_+(P)$.   
\end{rmk}

\begin{ex}\label{ex:POL}Let $X$ be a smooth projective variety, and let $C$ be a smooth curve on $X$
with ample normal bundle. Then $[C]$ is big.

(By \cite[Theorem 8.4.1]{laz04} and \cite{bdpp13}, we know that $C$ is movable.
In fact $C\cdot D>0$ whenever $D$ is a non-zero effective divisor that meets $C$.
Let $\pi:\bl_CX\to X$ be the blow-up of $C$ on $X$
with exceptional divisor $E$. Let $\ell\subset E$ be a line in any of the fibers over $C$. 
Let $N:=N_CX$ be the normal bundle of $C$, which is ample by assumption. 
We claim that $\pi^*C-\epsilon\ell$ is movable for sufficiently small $\epsilon>0$, and $\sh(\pi^*C-\epsilon\ell;y)>0$ for all $y\in E$. If true, it follows that $\pi^*C-\epsilon\ell$ is big, hence so is its pushforward $C$.
$$(\pi^*C-\epsilon\ell)\cdot E=\epsilon>0.$$
If $\overline F\subset\bl_CX$ is an effective divisor that does not meet $C$, then $(\pi^*C-\epsilon\ell)\cdot\overline F=0$. 
Let $\overline D\neq E$ be an irreducible effective divisor through some $y\in E$, 
and let $D:=\pi_*\overline D\neq 0$. 
$$\frac{(\pi^*C-\epsilon\ell)\cdot\overline D}{\mult y\overline D}\geq \frac{(\pi^*C-\epsilon\ell)\cdot\overline D}{\mult y\overline D\cap E}=\frac{\bigl(\jmath_*(\xi^{n-2}+(c_1(N)-\epsilon)\xi^{n-3}f)\bigr)\cdot\overline D}{\mult y\overline D\cap E}=$$ 
$$=\frac{(\xi^{n-2}+(c_1(N)-\epsilon)\xi^{n-3}f)\cdot(\overline D\cap E)}{\mult_y\overline D\cap E},$$
where $\jmath:E\to\bl_CX$ is the inclusion map, 
where $\xi$ is the class of $\cal O_{\bb P(N^{\vee})}(1)$,
and $f$ is the class of a fiber of $g$.
We have used the blow-up formula \cite[Proposition 6.7.(a)]{fulton84}.
To conclude, by Lemma \ref{lem:uniformlowerbound} it is enough to prove that 
$\xi^{n-2}+c_1(N)\xi^{n-3}f$ is in the strict interior of
$\Mov_1(\bb P_C(N^{\vee}))$. 
This can be verified directly by \cite{ful11}. 
)\qed
\end{ex}

The preivous example has been conjectured by Peternell. Ottem \cite{Ottem16} and Lau \cite{Lau17} have found different proofs. 
We see the relation between bigness and the positivity of Seshadri constants again for a special class of curves.

\begin{defn}[\cite{voisin}]Let $X$ be a smooth projective variety, and let $V\subseteq X$
be a subvariety of dimension $k$. Say that $V$ is \emph{very moving} if for a very general
$x\in X$ we have that for a very general $k$-dimensional subspace $W\subseteq T_xX$ there
exists a deformation $V'$ of $V$ passing through $x$ with $V'$ smooth and $T_xV'=W$. 
\end{defn}

General complete intersections of very ample divisors are natural examples of very moving subvarieties.

\begin{remark} Note that if $C$ is a very moving curve, then $\sh(C;x)\geq 1$ for very general $x\in X$.
The theorem implies then that $[C]$ is in the strict interior of $\Mov^1(X)^{\vee}$.
\cite[Proposition 2.7]{voisin} proves that very moving curves are in the strict interior of $\Eff_1(X)$.
\cite{voisin} also conjectures that very moving subvarieties of arbitrary dimension $k$ have classes in the
strict interior of $\Eff_k(X)$ and shows that this implies the generalized Grothendieck--Hodge conjecture for complete intersections of coniveau 2 in projective spaces. 
\end{remark}

\section{Jet separation}\label{s:jet}

\begin{rmk}\label{verymoving}Let $x\in X$ be a \emph{smooth} point. If $C$ is a curve such that for all irreducible divisors $D$ through $x$ 
we have that some deformation of $C$ meets $D$ properly and passes through $x$,
then $\sh(C;x)\geq 1$. (For any effective Cartier divisor $L$ through $x$, we can find a deformation $C'$ of $C$ that meets $L$ properly and also passes through $x$. Then $C\cdot L\geq\mult_xC'\cdot\mult_xL\geq\mult_xL$ by \cite[Theorem 12.4]{fulton84}.)  

In particular, when $X$ is smooth and the above condition holds for all $x\in X$, such curves $C$ have classes in the strict interior of $\Mov_1(X)$ by Theorem A. 
\end{rmk}

We see the above as a counterpart to the statement that $\sh(L;x)\geq 1$ if $L$ is ample and 
globally generated (\cite[Example 5.1.18]{laz04}). 

\begin{ex}\label{G(2,4)}
For $X$ be the $4$-dimensional Grassmann variety $G(2,4)$, and let $\ell$ the class of a line. Then $\sh(\ell;x)=\frac 12$ for all $x\in X$.
(Let $x\in X$ and let $D_x:=X\cap T_xX$, the intersection 
taking place in $\bb P^5$. Then $\mult_xD_x=2$ and $\frac{\ell\cdot D_x}{\mult_xD_x}=\frac 12$.
This gives the bound $\sh(\ell;x)\leq\frac 12$. 
Since $\sh(\ell;x)<1$, the line $\ell$ cannot satisfy the conditions of Remark \ref{verymoving}.
This can also be checked directly. 
Any line on $X$ through $x$ is contained in $T_xX$, hence also in $D_x$
and cannot be moved to meet $D_x$ properly through $x$.
However, $2\ell$ is the class of the complete intersection of 3 general members of $|L|$, and so it
does satisfy the conditions of Remark \ref{verymoving}. We deduce the reverse inequality $\sh(2\ell;x)\geq 1$. ) 
\end{ex}

\begin{defn}[``Jet separation'' for curves]Let $X$ be a projective variety, and let $x\in X$. Let $[C]\in\Mov_1(X)$ be a $\bb Z$-class.
Denote by $s(C;x)$ the largest nonnegative integer $s$ for which there exists an integer $N\geq 1$ 
such that for every effective divisor $L$ through $x$ there exists an effective $\bb Z$-cycle $C'\equiv N\cdot C$
with $\mult_xC'\geq Ns$, and $C'$ meets $L$ properly.
When no such $s$ exists, set $s(C;x)=-1$.
\end{defn}

\begin{remark}
By asking that $C'$ is merely a $\bb Q$-cycle, one can leave out the integer $N$. It is a restriction on the denominators of $C'$, slightly weaker than
the condition $L\in|L|$ in the case of divisors.
\end{remark}

\begin{rmk}If $C$ is movable and $L$ is an effective Cartier divisor through a \emph{smooth} point $x$, then $C\cdot L\geq\frac 1N\mult_xC'\cdot\mult_xL$ for any effective $\bb R$-cycle $C'$ with $C'\equiv N\cdot C$ whose support meets $L$ properly.
It follows that $\sh(C;x)\geq s(C;x)$. By passing to multiples, $\sh(C;x)\geq\sup_k\frac{s(kC;x)}{k}$.
\end{rmk}

\begin{rmk}If $C_1$ and $C_2$ are movable $\bb Z$-classes with $s(C_1;x)\geq 0$ and $s(C_2;x)\geq 0$, then $s(C_1+C_2;x)\geq s(C_1;x)+s(C_2;x)$. (If integers $N_1,N_2$ are as in the definition for $s(C_1;x)$ and $s(C_2;x)$ respectively, then ${\rm lcm}\,(N_1,N_2)$ satisfies the condition in the definition for $s=s(C_1;x)+s(C_2;x)$.) In particular $\sup_k\frac{s(kC;x)}{k}=\limsup_{k\to\infty}\frac{s(kC;x)}{k}=\lim_{k\to\infty}\frac{s(kC;x)}k$.
\end{rmk}

\begin{thrm}\label{thrm:asymptoticjetseparation}Let $X$ be a projective variety over an algebraically closed field.
Let $x\in X$ be a smooth point, and let $[C]$ be an integral class in the strict interior of 
$\Mov_1(X)$. Then $$\sh(C;x)=\lim_{k\to\infty}\frac{s(kC;x)}k.$$
\end{thrm}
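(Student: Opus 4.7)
The inequality $\sh(C;x) \geq \lim_k s(kC;x)/k$ is the content of the two remarks preceding the theorem: each effective cycle $C' \equiv NC$ with $\mult_x C' \geq Ns$ that meets a given $L \ni x$ properly yields $C \cdot L = \frac{1}{N} C' \cdot L \geq \frac{1}{N} \mult_x C' \cdot \mult_x L \geq s \cdot \mult_x L$ via \cite[Theorem 12.4]{fulton84}, and superadditivity of $s(\cdot;x)$ makes the limit exist and equal the supremum. So the task is the reverse inequality, and for this it suffices to show that for any rational $\sigma$ with $0 < \sigma < \sh(C;x)$, there is an integer $\tilde k \geq 1$ (with $\tilde k \sigma \in \bb Z$) such that $s(\tilde k C;x) \geq \tilde k \sigma$.

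The strategy is to work on the blow-up $\pi: \bl_xX \to X$ and apply Proposition \ref{prop:seshadriviapull}. Set $\alpha := \pi^*C - \sigma \ell$. The first step is to upgrade the movability given by \ref{prop:seshadriviapull} to interior membership: I claim $\alpha$ lies in the strict interior of $\Mov_1(\bl_x X) = \Eff^1(\bl_xX)^{\vee}$. Because $x$ is smooth, every nonzero effective divisor on $\bl_xX$ has the form $\overline L + aE$ with $L$ effective on $X$ and $a \geq 0$, and the projection formula together with $\ell \cdot \overline L = \mult_x L$, $\ell \cdot E = -1$, $\pi^*C \cdot E = 0$ gives
\[
\alpha \cdot (\overline L + aE) = C \cdot L - \sigma \cdot \mult_x L + \sigma a.
\]
This is strictly positive in every nonzero case: for $L \neq 0$ with $\mult_x L > 0$ use $C \cdot L / \mult_x L \geq \sh(C;x) > \sigma$; for $L \neq 0$ with $\mult_x L = 0$ use that $[C]$ lies in the strict interior of $\Mov_1(X) = \Eff^1(X)^{\vee}$; and for $L = 0$, $a > 0$ use $\sigma > 0$.

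Next, since $\tilde k \alpha$ is integral for $\tilde k$ divisible enough and lies in the strict interior of $\Mov_1(\bl_xX)$, which by definition is the closure of the cone generated by classes of irreducible curves that deform in families dominating $\bl_xX$, for some further multiple I may write
\[
\tilde k \alpha = \sum_{i=1}^{r} a_i [\Gamma_i], \qquad a_i \in \bb Z_{>0},
\]
with each $\Gamma_i \subset \bl_x X$ irreducible and moving in a dominating family. (The standard Carathéodory-plus-rational-approximation argument inside the rational open cone works because the fiber of the representation map over an integral interior point is a rational affine subspace meeting the positive orthant in a nonempty open set.) Given any effective divisor $L$ on $X$ through $x$, I deform each $\Gamma_i$ within its family to $\Gamma'_i$ meeting $\overline L$ properly; because the family dominates $\bl_x X$, a very general deformation is not contained in the proper subvarieties $E$ or $\overline L$, so each $\Gamma'_i$ is the strict transform of $\pi(\Gamma'_i)$.

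Finally, set $C' := \pi_*\bigl(\sum a_i \Gamma'_i\bigr)$. Since $\pi_*\ell = 0$ and $\pi_*\pi^*C = C$, the class is $[C'] = \tilde k [C]$; and $C'$ meets $L$ properly because each $\pi(\Gamma'_i)$ does. For the multiplicity, formula \eqref{multform} gives $\mult_x \pi(\Gamma'_i) = \Gamma'_i \cdot E$ for each $i$, so
\[
\mult_x C' = \sum_i a_i \cdot (\Gamma'_i \cdot E) = \tilde k \alpha \cdot E = \tilde k \sigma.
\]
Applying the definition with $N = 1$ in the role of $(C, s) \leadsto (\tilde k C, \tilde k \sigma)$ yields $s(\tilde k C; x) \geq \tilde k \sigma$, and letting $\sigma \nearrow \sh(C;x)$ finishes the proof. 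The step I expect to be most delicate is the convex-geometric rational representation $\tilde k \alpha = \sum a_i[\Gamma_i]$ with integer $a_i$ and strongly movable $\Gamma_i$; everything else is a clean consequence of the blow-up interpretation of $\sh$ combined with the moving lemma for curves in dominating families.
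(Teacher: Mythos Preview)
Your overall strategy is the paper's: show that $\alpha := \pi^*C - \sigma\ell$ lies in the strict interior of $\Mov_1(\bl_xX)$ for rational $0 < \sigma < \sh(C;x)$, represent an integer multiple of $\alpha$ by curves that can be moved off any prescribed divisor, push forward, and let $\sigma \nearrow \sh(C;x)$. The paper packages the interior claim as Lemma~\ref{lem:strictpospull} and then uses the BDPP description of interior movable classes as $\bb Q_+$-combinations of pushforwards of complete intersections from birational models (on both $X$ and $\bl_xX$, together with a $k=mq+q_1$ bookkeeping step); you instead work only on $\bl_xX$, appeal to the generating set of strongly movable curves, and use a rational Carath\'eodory argument. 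Once the interior claim is in hand, both representation steps are correct and yours is arguably tidier.

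The substantive gap is in your proof of the interior claim itself. You verify $\alpha \cdot D > 0$ only for nonzero \emph{effective} divisors $D = \overline L + aE$, but interior membership in $\Eff^1(\bl_xX)^\vee$ demands strict positivity against every nonzero \emph{pseudo-effective} class, and positivity on effective classes does not in general pass to the closure. Your case analysis does not extend: a pseudo-effective class $\beta$ on $\bl_xX$ need not decompose as $\overline L + aE$ with $L$ effective, and there is no ``$\mult_xL$'' available to compare with $\sh(C;x)$. The paper's Lemma~\ref{lem:strictpospull} handles this by contradiction: if $(\pi^*C - \sigma\ell)\cdot\overline D = 0$ for some nonzero pseudo-effective $\overline D$, then combining with $(\pi^*C - \sh(C;x)\ell)\cdot\overline D \geq 0$ and $\pi^*C\cdot\overline D \geq 0$ (both from Proposition~\ref{prop:seshadriviapull}) forces $\pi^*C\cdot\overline D = \ell\cdot\overline D = 0$; pushing forward gives $C \cdot \pi_*\overline D = 0$, hence $[\pi_*\overline D] = 0$ since $[C]$ is interior on $X$, and then $[\overline D]$ must be a nonnegative multiple of $[E]$, contradicting $\ell\cdot\overline D = 0$. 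Alternatively you can rescue your own computation by replacing $\mult_xL$ with the Fujita--Nakayama invariant $\fuj(\pi_*\beta;x)$ and invoking the elementary inequality $\sh(C;x)\cdot\fuj(\gamma;x) \leq C\cdot\gamma$ for pseudo-effective $\gamma$, which follows at once from pairing $\pi^*C-\sh(C;x)\ell$ with $\pi^*\gamma-\fuj(\gamma;x)E$.
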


\begin{proof}It is enough to prove that $\liminf_{k\to\infty}\frac{s(kC;x)}k\geq\sh(C;x)$.
Since $[C]$ is in the strict interior of $\Mov_1(X)$, we have that $\sh(C;x)>0$.
Let $0<\frac pq<\sh(C;x)$ be a rational approximation of $\sh(C;x)$.
Lemma \ref{lem:strictpospull} below gives that $[q\pi^*C- p\ell]$ is in the strict interior of $\Mov_1(\bl_xX)$. The version of the main result of \cite{bdpp13} in \cite[Theorem 11.4.19]{laz04}
implies that there exist finitely many birational morphisms $Y_i\to X$ and $Z_j\to\bl_xX$
such that $[C]$ and respectively $[q\pi^*C-p\ell]$ are $\bb Q_{+}$-linear combinations of pushforwards of complete intersection curves (of very ample divisors) from $Y_i$ and $Z_j$ respectively.
We use here that $[C]$ and $[q\pi^*C-p\ell]$ are in the strict interior of $\Mov_1(X)$ and $\Mov_1(\bl_xX)$ respectively.

Choose $N$ large enough to clear all denominators in the $\bb Q_{+}$-linear combinations above.
For any effective divisor $L$ through $x$ choose effective divisors $L_i$ on each of $Y_i$ and $L_j$ on each $Z_j$ that surject onto it. 
Find general complete intersections on $Y_i$ that meet $L_i$ properly and similarly for $Z_j$.
Pushing from $Y_i$ to $X$ and adding up we find an effective curve of class $N\cdot [C]$ that meets $L$ properly. Pushing from $Z_j$ to $\bl_xX$ and adding up, we find an effective curve of class $N\cdot[q\pi^*C-p\ell]$. Its pushforward to $X$ is an effective curve of class $Nq\cdot[C]$ with multiplicity at least $Np$ at $x$. It also meets $L$ properly.

For any positive integer $k$, write $k=mq+q_1$ with $0\leq q_1<q$. From the discussion above we construct an effective $\bb Z$ 1-cycle of class $Nk\cdot [C]$ with multiplicity at least $Nmp$ at $x$ that meets $L$ properly. Then $\frac{s(kC;x)}{k}\geq \frac{mp}{k}=\frac{mp}{mq+q_1}$.
As $k$ grows, the latter approximates $\frac pq$. We conclude by letting $\frac pq$ tend to $\sh(C;x)$.
\end{proof}

\begin{lem}\label{lem:strictpospull}Let $x\in X$ be a smooth point and let $[C]$ be a class in the strict interior of $\Mov_1(X)$.
With the usual notations for the blow-up of $x$, we have that $\pi^*C-t\ell$ is in the strict interior
of $\Mov_1(\bl_xX)$ for all $0<t<\sh(C;x)$.
\end{lem}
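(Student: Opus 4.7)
The plan is to use the duality $\Mov_1(\bl_xX)=\Eff^1(\bl_xX)^{\vee}$ of \cite{bdpp13}: a class lies in the strict interior of $\Mov_1(\bl_xX)$ precisely when it pairs strictly positively with every nonzero class of $\Eff^1(\bl_xX)$. So I fix an arbitrary $[D]\in\Eff^1(\bl_xX)\setminus\{0\}$ and aim to show $(\pi^*C-t\ell)\cdot D>0$.

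Since $x$ is a smooth point of $X$, the standard blow-up computation gives a direct sum decomposition $N^1(\bl_xX)=\pi^*N^1(X)\oplus\bb R[E]$, so I may write $[D]=\pi^*[L]+a[E]$ with $L:=\pi_*D\in\Eff^1(X)$ (pseudo-effectivity is preserved under birational pushforward) and $a\in\bb R$. Using the projection formula together with the identities $\ell\cdot E=-1$ and $\ell\cdot\pi^*L=0$ one computes
$$(\pi^*C-t\ell)\cdot D=C\cdot L+ta.$$
The hypothesis $t<\sh(C;x)$ allows me to pick $s\in(t,\sh(C;x))$; Proposition \ref{prop:seshadriviapull} then gives that $\pi^*C-s\ell$ is movable, and pairing it with the pseudo-effective $D$ yields the key weak inequality $C\cdot L+sa\geq 0$.

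I now split into three cases. If $a<0$, then $C\cdot L\geq -sa$, whence $(\pi^*C-t\ell)\cdot D\geq(s-t)(-a)>0$. If $a\geq 0$ and $L\neq 0$, then $[C]$ being in the strict interior of $\Mov_1(X)=\Eff^1(X)^{\vee}$ forces $C\cdot L>0$, so $(\pi^*C-t\ell)\cdot D\geq C\cdot L>0$. Finally, if $a\geq 0$ and $L=0$, then $[D]=a[E]\neq 0$ forces $a>0$ and $(\pi^*C-t\ell)\cdot D=ta>0$. In every case the pairing is strictly positive, so $\pi^*C-t\ell$ lies in the strict interior of $\Mov_1(\bl_xX)$.

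The only subtle case is $a<0$, where neither the interiority of $[C]$ nor the movability of $\pi^*C-s\ell$ alone yields strict positivity: the strictness arises from the gap $s-t>0$ promoting the weak inequality coming from Proposition \ref{prop:seshadriviapull} to a strict one.
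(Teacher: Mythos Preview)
Your proof is correct and uses the same underlying idea as the paper---duality with $\Eff^1(\bl_xX)$ together with the movability of $\pi^*C-s\ell$ for $s$ close to $\sh(C;x)$---but your execution is more direct. The paper argues by contradiction: assuming $(\pi^*C-t\ell)\cdot\overline D=0$ for some nonzero pseudo-effective $\overline D$, it observes that $\pi^*C-t\ell$ is a strict convex combination of the movable classes $\pi^*C$ and $\pi^*C-\sh(C;x)\ell$, forcing $\pi^*C\cdot\overline D=\ell\cdot\overline D=0$; pushing forward gives $[\pi_*\overline D]=0$, and then a cited result (\cite[Theorem 4.13]{fl16sw}) is invoked to conclude $[\overline D]$ is effective, hence a multiple of $E$, contradicting $\ell\cdot\overline D=0$. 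You instead write down the decomposition $[D]=\pi^*[L]+a[E]$ from the outset, compute $(\pi^*C-t\ell)\cdot D=C\cdot L+ta$ explicitly, and case-split on the sign of $a$; the gap $s-t>0$ in the case $a<0$ plays exactly the role of the strict-convex-combination step in the paper. Your route is a bit more elementary in that it avoids the external citation and the effectivity argument, trading them for a short explicit computation.
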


\begin{proof}Note that $\sh(C;x)>0$ since $[C]$ is in the strict interior of $\Mov_1(X)$.
If for some $t\in(0,\sh(C;x))$ the class $[\pi^*C-t\ell]$ is not in the strict interior of $\Mov_1(\bl_xX)$,
there exists a pseudo-effective $\bb R$-Cartier $\bb R$-divisor $\overline D$ with $[\overline D]\neq 0$
on $\bl_xX$ and with $(\pi^*C-t\ell)\cdot \overline D=0$. Since $(\pi^*C-\sh(C;x)\ell)\cdot \overline D\geq 0$
and $\pi^*C$ is movable so $\pi^*C\cdot \overline D\geq 0$, using $t<\sh(C;x)$ we find
$\pi^*C\cdot\overline D=\ell\cdot\overline D=0$. Set $D:=\pi_*\overline D$. 
It is still an $\bb R$-Cartier $\bb R$-divisor, since $x$ is smooth. It is pseudo-effective, and $C\cdot D=0$ by the projection formula. Since
$[C]$ is in the strict interior of $\Mov_1(X)$, necessarily $[D]=0$.
For example by \cite[Theorem 4.13]{fl16sw}, it follows that $[\overline D]$ is effective.
The only effective divisors on $\bl_xX$ with numerically trivial pushforward via $\pi$ are the multiples of the exceptional $E$. From $E\cdot l=-1$ we deduce $[\overline D]=0$, which contradicts our choice.
\end{proof}

An easier asymptotics-free result holds for $\bb R$-classes.

\begin{cor}[Theorem C] Let $x\in X$ be a smooth point and let $[C]$ be an $\bb R$-class in the strict interior of $\Mov_1(X)$. Then $\sh(C;x)$ is the supremum of all $s\geq 0$ such 
that for all effective $\bb R$-Weil $\bb R$-divisors $L$ with $x\in\Supp L$
there exists an effective $\bb R$-1-cycle $C'\equiv C$ such that $\Supp C'$ and $\Supp L$ meet properly and $\mult_xC'\geq s$. The same statement holds with $\bb Q$ replacing $\bb R$ throughout.
\end{cor}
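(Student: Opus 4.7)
The plan follows the strategy of Theorem \ref{thrm:asymptoticjetseparation} but avoids its asymptotics, exploiting the extra flexibility afforded by $\bb R$- and $\bb Q$-coefficients. Denote by $s^*(C;x)$ the supremum on the right-hand side. The upper bound $\sh(C;x) \geq s^*(C;x)$ is the easy direction: given any $s \leq s^*(C;x)$ and any effective Cartier divisor $L$ through $x$ (an instance of an effective $\bb R$-Weil divisor through $x$), the hypothesis yields $C' \equiv C$ meeting $\Supp L$ properly with $\mult_x C' \geq s$, and the intersection-multiplicity inequality for properly meeting cycles \cite[Theorem 12.4]{fulton84} gives $C \cdot L = C' \cdot L \geq \mult_x C' \cdot \mult_x L \geq s \cdot \mult_x L$. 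Taking the infimum over $L$ and letting $s$ tend to $s^*(C;x)$ closes this direction.

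For the converse, I would fix $0 < s < \sh(C;x)$ and an arbitrary effective $\bb R$-Weil $\bb R$-divisor $L$ through $x$, and directly produce an effective $\bb R$-$1$-cycle $C' \equiv C$ with $\mult_x C' \geq s$ meeting $\Supp L$ properly. By Lemma \ref{lem:strictpospull}, $[\pi^*C - s\ell]$ lies in the strict interior of $\Mov_1(\bl_x X)$. Applying \cite[Theorem 11.4.19]{laz04}, I would express it as a finite $\bb R_{\geq 0}$-combination
$$[\pi^*C - s\ell] \;=\; \sum_j a_j \cdot \mu_{j*}\bigl(\xi_{j,1}\cdots\xi_{j,n-1}\bigr),$$
with $\mu_j\colon Z_j\to\bl_x X$ birational from smooth projective varieties and each $\xi_{j,i}$ very ample on $Z_j$. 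A Bertini argument selects general members of the $|\xi_{j,i}|$ so that the resulting complete intersections on each $Z_j$ are smooth irreducible curves whose images in $X$ meet every prime component of $\Supp L$ properly; assembled into an effective $\bb R$-$1$-cycle $\gamma$ on $\bl_x X$ with $[\gamma] = [\pi^*C - s\ell]$, the pushforward $C' := \pi_*\gamma$ satisfies $[C'] = [C]$ by the projection formula. To check the multiplicity, split $\gamma = \gamma_1 + \gamma_2$ with $\gamma_2$ the part supported in $E$. Since $\gamma\cdot E = s$ and $\gamma_2\cdot E \leq 0$ (effective curves in $E\simeq\bb P^{n-1}$ pair non-positively with $E$ because $\cal O_E(E)\simeq \cal O_E(-1)$), one obtains $\gamma_1\cdot E \geq s$, and \eqref{multform} applied component-wise to $\gamma_1$ yields $\mult_x C' = \gamma_1 \cdot E \geq s$.

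The step I expect to be most delicate is the simultaneous proper-meeting requirement: the representative $\gamma$ must be engineered so that every component of $\pi_*\gamma$ meets each prime divisorial component of $\Supp L$ properly. This reduces via Bertini to finitely many open density conditions on the $|\xi_{j,i}|$, compatible since $\Supp L$ has finitely many prime components and each composition $\pi\circ\mu_j\colon Z_j\to X$ is birational, hence cannot factor through any proper subvariety of $X$. Letting $s$ increase to $\sh(C;x)$ completes the lower bound. For the $\bb Q$-version, when $[C]$ and $s$ are rational the class $[\pi^*C - s\ell]$ is a rational interior point of the cone generated over $\bb R_{\geq 0}$ by the integral classes $[\mu_{j*}(\xi_{j,1}\cdots\xi_{j,n-1})]$, so it admits a finite $\bb Q_{\geq 0}$-linear expression in these generators, producing a $\bb Q$-$1$-cycle $C'$.
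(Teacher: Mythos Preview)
Your proposal is correct and follows essentially the same route as the paper: the easy bound via the intersection-multiplicity inequality of \cite[Theorem 12.4]{fulton84}, and the harder bound via Lemma \ref{lem:strictpospull} together with the BDPP representation of interior movable classes by pushed-forward complete intersections, whose generic choice ensures proper meeting with $\Supp L$. Your treatment is more explicit than the paper's terse argument in places---notably the $\gamma_1+\gamma_2$ split to bound $\mult_x C'$ from below (the paper simply asserts multiplicity $t$, implicitly using that general complete intersections on the $Z_j$ avoid $E$) and the Carath\'eodory-type justification of the $\bb Q$-case---but the underlying strategy is identical.
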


\begin{proof}The B\' ezout arguments of Remark \ref{verymoving} prove that
$\sh(C;x)\geq s$ for all such $s$. For the reverse inequality, let $0<t<\sh(C;x)$.
By Lemma \ref{lem:strictpospull}, the class of $\pi^*C-t\ell$ is in the strict interior of $\Mov_1(\bl_xX)$, by \cite{bdpp13} (the version in \cite[Theorem 11.4.19]{laz04}) it is represented by some $\bb R_+$-combination of complete intersection curves. Its pushforward represents $[C]$, has multiplicity $t$ at $x$,
and by genericity can be chosen to meet any given divisor on $X$ properly.  
This proves that $\sh(C;x)$ is bounded above by the supremum of all $s$
as in the statement. The same arguments hold for rational coefficients.
\end{proof}

\section{Bounds on Seshadri constants}

\subsection{Lower bounds}
\begin{lem}Let $X$ be a projective variety of dimension $n\geq 2$ over $\bb C$. Let $H$ be an ample $\bb R$-divisor. Then 
$\sh(H^{n-1};x)\geq \sh(H;x)^{n-1}$ for all $x\in X$.
\end{lem}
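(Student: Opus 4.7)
The plan is to work on the blow-up $\pi:\widetilde X:=\bl_xX\to X$ with exceptional divisor $E$ and exploit the nefness of $N:=\pi^*H-\varepsilon E$, where $\varepsilon:=\sh(H;x)$. By the classical Seshadri characterization for divisors recalled in \eqref{seshdefdiv}, $N$ is a nef $\mathbb{R}$-divisor on $\widetilde X$. I will first treat the case where $x$ is a smooth point, where the argument reduces to a clean algebraic identity, and then comment on the singular case.

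Assume $x$ is smooth, so $E\simeq\mathbb P^{n-1}$. For any effective Cartier divisor $L$ through $x$, set $m:=\mult_xL$, so $\pi^*L=\overline L+mE$ with $\overline L$ the strict transform. Since $\pi^*H|_E=0$, every intersection $(\pi^*H)^i\cdot E^{n-i}$ with $i\geq 1$ vanishes, and the projection formula gives
\[
H^{n-1}\cdot L\ =\ (\pi^*H)^{n-1}\cdot\pi^*L\ =\ (\pi^*H)^{n-1}\cdot\overline L.
\]
Binomially expand $(\pi^*H)^{n-1}=(N+\varepsilon E)^{n-1}$. For $k\geq 1$ compute $N^{n-1-k}E^k\cdot\overline L$ by restriction to $E$: with $H_E$ the hyperplane class, $N|_E=\varepsilon H_E$, $E|_E=-H_E$, and $\overline L|_E=mH_E$ (the projectivized tangent cone of $L$ at $x$), the number equals $(-1)^{k-1}m\,\varepsilon^{n-1-k}$. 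Summing and using $\sum_{k=0}^{n-1}\binom{n-1}{k}(-1)^k=0$ collapses the binomial series to the identity
\[
(\pi^*H)^{n-1}\cdot\overline L\ =\ N^{n-1}\cdot\overline L\ +\ m\,\varepsilon^{n-1}.
\]
The first term on the right is nonnegative by Kleiman's criterion applied iteratively: $N|_{\overline L}$ is nef on each component of $\overline L$, so its top self-intersection is nonnegative. Hence $H^{n-1}\cdot L\geq\varepsilon^{n-1}\mult_xL$, and taking the infimum over $L$ proves the inequality for smooth $x$.

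The main obstacle is extending the argument to a singular point $x$. There $\pi^*L-\overline L-mE$ may carry additional effective exceptional components (cf.\ Remark \ref{rmk:mdZ}), and $E$ need not be isomorphic to $\mathbb P^{n-1}$, so the restriction-to-$E$ computation does not go through verbatim. I expect to remedy this by passing to a resolution of singularities $\mu:Y\to X$ (available over $\mathbb C$), running the smooth-case identity on $Y$ with $\mu^*H$ and a preimage divisor, and pushing the inequality down via the projection formula together with Lemma \ref{lem:multiplicitypush} to control multiplicities under $\mu$. Either this reduction or a semicontinuity argument approximating $x$ by smooth points will yield the general case, but the core mechanism remains the single algebraic identity above.
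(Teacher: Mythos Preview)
Your argument at smooth points is correct and arrives at the same identity
\[
(\pi^*H)^{n-1}\cdot\overline L \;=\; N^{n-1}\cdot\overline L + (\mult_xL)\,\varepsilon^{n-1}
\]
as the paper, so the core idea (pass to the blow-up, use nefness of $N=\pi^*H-\varepsilon E$, intersect with $\overline L$) is the same. The difference is in how you derive the identity. You expand $(\pi^*H)^{n-1}=(N+\varepsilon E)^{n-1}$ and evaluate every mixed term $N^{n-1-k}E^k\cdot\overline L$ by restricting to $E\simeq\mathbb P^{n-1}$; this is where you need $x$ smooth. The paper instead expands in the other direction,
\[
0\;\le\; N^{n-1}\cdot\overline L \;=\; (\pi^*H-\varepsilon E)^{n-1}\cdot\overline L \;=\; (\pi^*H)^{n-1}\cdot\overline L + \varepsilon^{n-1}(-E)^{n-1}\cdot\overline L,
\]
and kills all cross terms $(\pi^*H)^i\cdot E^{n-1-i}\cdot\overline L$ with $0<i<n-1$ in one stroke using $\pi^*H\cdot E=0$ (since $\pi^*H|_E$ is pulled back from a point). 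That vanishing does not use any description of $E$ and holds at singular $x$ as well; combined with $\pi^*H^{n-1}\cdot\overline L=H^{n-1}\cdot L$ (projection formula) and $(-E)^{n-1}\cdot\overline L=-\mult_xL$ (formula \eqref{multform}), the inequality follows for every $x$.

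So your proposed fix via resolution of singularities and Lemma \ref{lem:multiplicitypush} is unnecessary: if you swap the direction of the binomial expansion and invoke $\pi^*H\cdot E=0$ instead of computing restrictions to $E$, the argument becomes both shorter and uniform in $x$. This is exactly the content of the paper's one-line proof (``replace $\tfrac1m$ in the proof of Lemma \ref{lem:uniformlowerbound} by $\sh(H;x)$'').
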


\begin{proof}Replace $\frac 1m$ in the proof of Lemma \ref{lem:uniformlowerbound} by $\sh(H;x)$.
\end{proof}

\begin{cor}Let $H$ be an ample $\bb Z$-divisor on a smooth projective variety $X$ of dimension $n$ over $\bb C$.
Then $\sh(H^{n-1};x)\geq \frac 1{n^{n-1}}$ for very general $x\in X$.
\end{cor}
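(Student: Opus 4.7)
The plan is to combine the lemma just proved with the Ein--Küchle--Lazarsfeld lower bound for Seshadri constants of big and nef divisors at very general points. The preceding lemma gives the pointwise inequality
\[
\sh(H^{n-1};x)\geq\sh(H;x)^{n-1}
\]
for every $x\in X$, so the corollary will follow once we know that $\sh(H;x)\geq\frac{1}{n}$ for very general $x\in X$.

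Since $H$ is an ample $\bb Z$-divisor on the smooth projective variety $X$ of dimension $n$, it is in particular big and nef, so \cite[Theorem 1]{ekl95} (cited in the introduction) applies: there is an at most countable union of proper closed subsets of $X$ off of which $\sh(H;x)\geq\frac{1}{n}$. For any such very general $x$, combining with the lemma gives
\[
\sh(H^{n-1};x)\geq\sh(H;x)^{n-1}\geq\frac{1}{n^{n-1}},
\]
which is the desired bound.

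There is no real obstacle here; the whole argument is just chaining the lemma with the Ein--Küchle--Lazarsfeld bound. The only mild subtlety is making sure the integrality assumption on $H$ is used correctly: the \cite{ekl95} result applies because $H$ is ample (hence big and nef), and smoothness of $X$ is exactly the hypothesis needed for both the lemma (it is stated for smooth $X$ in the version being applied, via Lemma \ref{lem:uniformlowerbound}) and for the very general lower bound of \cite{ekl95}.
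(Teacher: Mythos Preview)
Your proof is correct and matches the paper's proof exactly: combine the preceding lemma $\sh(H^{n-1};x)\geq\sh(H;x)^{n-1}$ with the bound $\sh(H;x)\geq\frac 1n$ from \cite[Theorem 1]{ekl95}. One small remark on your commentary: the preceding lemma is actually stated for arbitrary projective varieties (not just smooth ones), so smoothness of $X$ is needed only for the application of \cite{ekl95}.
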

\begin{proof}Use the bound $\sh(H;x)\geq\frac 1n$ of \cite[Theorem 1]{ekl95}.
\end{proof}

\begin{prop}\label{prop:lowerboundci}Let $X$ be a projective variety of dimension $n$ over $\bb C$. Let $H$ be an ample
Cartier $\bb Z$-divisor on $X$ such that $(H^n)\geq n^{n-2}$, or $\sh(H;x_0)\geq 1$ for some $x_0\in X$, or more generally 
\begin{equation}\label{seshadribadcondition}\sh(H;x_0)^{n-2}\cdot(H^n)\geq 1.\end{equation} Then $\sh(H^{n-1};x)\geq 1$ for very general $x\in X$.
In fact we can choose $x$ outside the union of the singular locus of $X$ with countably many 
closed subsets of codimension two or more.
\end{prop}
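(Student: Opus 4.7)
\medskip\noindent\textbf{Proof plan.} The plan is to adapt the surface argument of Ein--Lazarsfeld \cite{el93} to complete intersection curves $H^{n-1}$. I will assume for contradiction that the locus $S$ of smooth points $x\in X$ with $\sh(H^{n-1};x)<1$ is not contained in $\Sing X$ together with a countable union of closed subvarieties of codimension $\geq 2$. For each $x\in S$ I pick an irreducible effective Cartier divisor $L_x\ni x$ with $d_x:=H^{n-1}\cdotp L_x<\mult_xL_x=:m_x$; since $d_x\geq 1$ this forces $m_x\geq 2$. Partitioning by the Hilbert scheme component of $L_x$, by $d_x$ and by $m_x$ is a countable refinement, so pigeon-holing reduces me to a single irreducible Hilbert component $T$ of divisors with $H^{n-1}\cdotp L\equiv d$ on $T$, a fixed integer $m>d$, and one component $\mathcal J$ of the incidence $\{(x,[L])\in X\times T:\mult_xL\geq m\}$ whose projection to $X$ is dominant. (Components of $\mathcal J$ whose sweep is a proper divisor of $X$ collapse to a single fixed irreducible divisor $D$ whose only bad points sit in $D_{\mathrm{sing}}$, already of codimension $\geq 2$.)

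The crux is to represent the class $L^2\in N_{n-2}(X)$ by an effective $(n-2)$-cycle $V$ with $\mult_xV\geq m^2$ at very general $x$ in the image of $\mathcal J$. If the generic fiber of $\mathcal J\to X$ is positive-dimensional, two distinct $t\neq t'$ over the same $x$ give distinct numerically equivalent irreducible divisors $L_t\neq L_{t'}$, each with an $m$-fold singularity at the smooth point $x$; their intersection is proper and Samuel's multiplicity inequality at $x$ delivers $\mult_x(L_t\cap L_{t'})\geq m\cdotp m=m^2$. If $\mathcal J\to X$ is generically finite over its image, a flat-limit argument along a one-parameter curve $T_1\subset T$ as in \cite{el93}, with $t'\to t_0$ and $x(t')\to x(t_0)$, produces an effective representative of $[L^2]$ whose multiplicity at $x(t_0)$ is at least $m^2$, arising from the confluence of the two $m$-fold singularities.

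Two intersection inequalities then combine. First, on $\bl_xX$ the class $\pi^*H-\sh(H;x)E$ is nef and $\pi^*H\cdotp E=0$ in $N_{n-2}(\bl_xX)$, so $(\pi^*H-\sh(H;x)E)^{n-2}=\pi^*H^{n-2}+(-\sh(H;x))^{n-2}E^{n-2}$; intersecting with the strict transform $\widetilde V$ and applying \eqref{multform} gives the Seshadri-type bound
\[
V\cdotp H^{n-2}\geq\mult_xV\cdotp\sh(H;x)^{n-2}\geq m^2\sh(H;x)^{n-2}.
\]
Second, restricting to a general smooth surface $S\subset X$ cut out by $n-2$ sections of a very ample multiple $|kH|$ (Bertini), Hodge index on $S$ rescales to $L^2\cdotp H^{n-2}\cdotp(H^n)\leq(L\cdotp H^{n-1})^2=d^2$. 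Since $V\equiv L^2$ numerically, combining yields $m^2\sh(H;x)^{n-2}(H^n)\leq d^2$. Upper-semicontinuity of $\sh(H;\cdot)$ propagates the hypothesis \eqref{seshadribadcondition} at $x_0$ to $\sh(H;x)^{n-2}(H^n)\geq 1$ at very general $x$, so $m\leq d$, contradicting $m>d$.

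The hard part will be the flat-limit step when $\mathcal J\to X$ is only generically one-to-one: one must degenerate one irreducible divisor onto a distinct numerically equivalent one while tracking the confluence of the two $m$-fold singularities in order to produce an effective representative of $L^2$ with multiplicity $\geq m^2$ at very general $x$. This is precisely the delicate specialization argument that \cite{el93} carries out in the surface case.
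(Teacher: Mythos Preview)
Your approach is essentially the paper's: assume failure, extract via Hilbert-scheme arguments a covering family of irreducible divisors $L_t$ with $m:=\mult_{x_t}L_t>d:=H^{n-1}\cdot L_t$, bound $L^2\cdot H^{n-2}$ from below in terms of $m$ and $\sh(H;x_t)$, bound it from above by Hodge index, and contradict \eqref{seshadribadcondition}. The only substantive discrepancy is the multiplicity you claim for the effective representative of $[L^2]$.

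In the flat-limit (generically finite) case you assert that the confluence of two $m$-fold points produces $\mult_xV\geq m^2$. This is not what \cite{el93} proves. The mechanism there (and what the paper invokes) is the \emph{infinitesimal} one: a first-order deformation $L'_t$ of $L_t$ along the family satisfies $\mult_{x_t}L'_t\geq m-1$ and meets $L_t$ properly, so the local contribution is $\geq m(m-1)$, not $m^2$. The paper then intersects further with $n-2$ general members of $|kH|$ of multiplicity $\geq s_k$ at $x_t$ and lets $k\to\infty$ to replace $s_k/k$ by $\sh(H;x_t)$; your Seshadri-style bound on $(n-2)$-cycles via nefness of $\pi^*H-\sh(H;x)E$ is an equivalent packaging of this step.

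Fortunately the weaker bound $m(m-1)$ is all you need: combined with Hodge index and $d\leq m-1$ you get
\[
m(m-1)\cdot\sh(H;x_t)^{n-2}\cdot(H^n)\ \leq\ (L\cdot H^{n-1})^2\ \leq\ (m-1)^2,
\]
and \eqref{seshadribadcondition} at very general $x_t$ forces $m\leq m-1$. So your outline is correct once you replace the unjustified $m^2$ by $m(m-1)$ and cite the first-order deformation lemma rather than a flat-limit confluence.
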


The artificial condition \eqref{seshadribadcondition} is automatically satisfied when $X$ is a surface.

\begin{proof}The proof mimics the surface case from \cite{el93}. If the result fails, then by usual Hilbert scheme arguments one can find a variety $T$ and a family of irreducible divisors on $X$ denoted $\cal L\subseteq T\times X$, flat over $T$, with a section $T\to\cal L:t\mapsto (t,x_t)$ such that $H^{n-1}\cdot L_t<\mult_{x_t}L_t$ for all $t\in T$ and $\bigcup_{t\in T}x_t\subseteq X$ is a constructible set whose closure has dimension at least $n-1$ and meets the smooth locus of $X$. 
Since $H^{n-1}\cdot L_t<\mult_{x_t}L_t$, we deduce $\mult_{x_t}L_t\geq 2$, so that $x_t$ is a singular point of $L_t$. Each $L_t$ is reduced, hence generically smooth. We conclude that $L_t$
is not a constant divisor, so in particular the family $\cal L$ covers $X$. 
If we let $m:=\mult_{x_t}L_t$ for general $t\in T$, then we can guarantee that 
an infinitesimal deformation $L'_t$ of $L_t$ has multiplicity at least $m-1$ at $x_t$ and meets $L_t$ properly.

For any $k>0$ denote by $s_k$ the largest integer $s$ such that the linear series $|kH|$ separates $s$-jets at $x_t$. By choosing $n-2$ elements $D_1,\ldots,D_{n-2}$ in $|kH|$ general among those with multiplicity at least $s_k$ at $x_t$, we then have
$$m(m-1)\cdot\left(\frac{s_k}k\right)^{n-2}\cdot (H^n)\leq (L_t^2\cdot H^{n-2})\cdot(H^n)\leq(L_t\cdot H^{n-1})^2\leq (m-1)^2.$$
The first inequality is because $L_t^2\cdot (kH)^{n-2}=L_t\cdot L'_t\cdot D_1\cdot\ldots\cdot D_{n-2}$ and we apply \cite[Example 12.4.9]{fulton84} to the infinitesimal situation of $n$ divisors meeting properly with multiplicities at least $m, m-1, s_k,\ldots,s_k$ respectively at $x_t$. 
The second is a Hodge inequality on a surface obtained as complete intersection of members of $|kH|$.
The last inequality is because $L_t\cdot H^{n-1}<m$ and $L_t\cdot H^{n-1}$ is an integer.  

By \cite[Theorem 5.1.17]{laz04}, we have $\lim_{k\to\infty}\frac{s_k}k=\sh(H;x)$.
After taking limits, the assumption \eqref{seshadribadcondition} leads to the contradiction $2\leq m(m-1)\leq (m-1)^2$.
\end{proof}

\begin{remark}Example \ref{ex:grass} shows that if $C$ is a $\bb Z$ 1-cycle with class in the strict interior of $\Mov_1(X)$, then the inequality $\sh(C;x)\geq 1$ may fail for all $x\in X$. Also we do not have an universal lower bound
on $\sh(C;x)$ for very general points $x\in X$ independent of $\dim X$, as is expected to hold
for ample Cartier divisors. 
\end{remark}

\subsection{Upper bounds}

\begin{defn}Let $X$ be a projective variety and let $x\in X$. For any pseudo-effective $\bb R$-Cartier $\bb R$-divisor $L$ on $X$, define the Fujita--Nakayama-type invariant
$$\fuj(L;x):=\sup\{t\geq 0\st \pi^*L-tE\mbox{ is pseudo-effective}\},$$
where $\pi:\bl_xX\to X$ is the blow-up with exceptional divisor $E$.
\end{defn}

\begin{rmk}Let $x\in X$ be a smooth point. For any movable curve $C$ and pseudo-effective $\bb R$-Cartier $\bb R$-divisor $L$
we have $$\sh(C;x)\cdot\mu\,(L;x)\leq C\cdot L.$$
(This is because $(\pi^*C-\sh(C;x)\ell)\cdot(\pi^*L-\mu\,(L;x)E)\geq 0$.)
Note that $\mu\,(L;x)\geq \sh(L;x)$ when $L$ is nef.
\end{rmk}

Equality may be achieved in $\sh(C;x)\cdot\mu\,(L;x)\leq C\cdot L$

\begin{ex}[Picard rank 1]\label{ex:picard1}Let $X$ be a smooth projective variety of dimension $n$ with ${\rm rk}\, N^1(X)=1$.
Let $H$ be an ample generator of $N^1(X)$.
Let $x\in X$ and let $\widetilde X:=\bl_xX$ with blow-up morphism $\pi:\widetilde X\to X$. 
As usual denote the exceptional divisor by $E$ and the class of a line in $E=\bb P^{n-1}$ by $\ell$.
Then 
$$\Eff^1(\widetilde X)=\bigl\langle \pi^*H-\mu\,(H;x)E\ ,\ E\bigr\rangle\quad\mbox{and}\quad\Nef^1(\widetilde X)=\bigl\langle \pi^*H-\sh(H;x)E\ ,\ \pi^*H\bigr\rangle.$$ 
For curves we have 
$$\Eff_1(X)=\bigl\langle\pi^*H^{n-1}-\mu\, (H^{n-1};x)\ell\ ,\ \ell\bigr\rangle\quad\mbox{and}\quad \Mov_1(X)=\bigl\langle\pi^*H^{n-1}-\sh(H^{n-1};x)\ell\ ,\ \pi^*H^{n-1}\bigr\rangle.$$
We define $\mu\, (H^{n-1};x)$ as for divisors. The known dualities between these cones give
$$\mu\, (H^{n-1};x)\cdot\sh(H;x)=(H^n)\quad\mbox{and}\quad \sh(H^{n-1};x)\cdot\mu\, (H;x)=(H^n).$$\qed
\end{ex}

The inequality $\mu\,(L;x)\geq\sh(L;x)$ may be strict.

\begin{ex}When $X$ is an irreducible principally polarized abelian surface with Picard number 1 
and $H$ is a theta divisor,  
$$\sh(H;x)=\frac 43<\sqrt 2=\sqrt{(H^2)}<\frac{3}2=\mu\,(H;x)$$ for all $x\in X$.
See \cite[Proposition 2]{stef98}. \qed
\end{ex}

Example \ref{ex:picard1} extends partially to arbitrary rank.

\begin{ex}Let $X$ be a smooth projective variety. Let $[C]\in\Mov_1(X)$. 
For every $x\in X$ there exists $L_x$ a nonzero pseudo-effective divisor on $X$ such that
$\sh(C;x)\cdot\fuj(L_x;x)=C\cdot L_x$.
(If $\sh(C;x)=0$, use Theorem A to find $[L]\neq 0$ with $C\cdot L=0$. Assume henceforth that $\sh(C;x)>0$. Since $\pi^*[C]-\sh(C;x)\ell$ is not in the interior of the movable cone, there exists a 
pseudo-effective divisor $\overline L_x$ on $\bl_xX$ such that $(\pi^*[C]-\sh(C;x)\ell)\cdot \overline L_x=0$. Manifestly $[L_x]$ and $[E]$ are not proportional. 
In particular $L_x:=\pi_*\overline L_x$ is nonzero. 
We deduce that $\overline L_x=\pi^*L_x-\fuj(L_x;x)E$, and the conclusion follows.)
\end{ex}

\begin{ex}When the Picard rank is bigger than 1, it is not always the case that if $H$ is ample and $C=H^{n-1}$, then 
$\sh(C;x)\cdot\mu(H;x)=(H^n)$ for all $x\in X$. What could motivate the question is that $H$ minimizes the expression $\frac{C\cdot L}{\vol^{1/n}(L)}$ from the definition of $\mathfrak M\,(C)$ below.

Take $X=\bl_p\bb P^2$ and consider the ample divisor $3H-E$, where $H$ is the pullback of the line class from $\bb P^2$, and $E$ is the exceptional divisor.
We have $(3H-E)^2=8$.
If $x$ is one of the torus invariant points on $E$, then $\sh(3H-E;x)=1$ by Example \ref{ex:toric}, and $\mu(3H-E)=5$. The latter is because the effective cone of $\bl_xX$ is $\langle H-E-F,\ E-F,\ F\rangle$, where $F$ is the new exceptional line, and by $H$ and $E$ we denote the pullbacks of the respective classes from $X$.  \qed
\end{ex}

\begin{lem}Let $L$ be a big (for $\bb R$-Weil divisors, this is taken in the sense of \cite{fkl16}) $\bb R$-divisor on the projective variety $X$ of dimension $n$, and let $x\in X$ be a smooth point. Then $$\mu(L;x)\geq\vol^{1/n}(L).$$
\end{lem}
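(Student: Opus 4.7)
The plan is to reduce the inequality to a sections-counting argument on $X$ itself, using the asymptotic Riemann--Roch type definition of the volume of a big $\mathbb{R}$-divisor together with the well-known bound on the number of linear conditions imposed by vanishing to high order at a smooth point.

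First, by the definition of $\mu(L;x)$ and the fact that $\Eff^1(\bl_xX)$ is closed, it suffices to show that for every rational $0<t<\vol^{1/n}(L)$ the class $\pi^*L-tE$ is pseudo-effective. For fixed such $t$, the definition of the volume of an $\mathbb{R}$-divisor gives
\[
h^0\bigl(X,\cal O_X(\lfloor mL\rfloor)\bigr)=\frac{\vol(L)}{n!}\,m^n+o(m^n)
\]
as $m\to\infty$ through positive integers. On the other hand, since $x$ is a smooth point, the completion of the local ring at $x$ is a power series ring in $n$ variables, so imposing vanishing to order $\geq s$ at $x$ is at most $\binom{s+n-1}{n}$ linear conditions on any linear series; in particular on $H^0(X,\cal O_X(\lfloor mL\rfloor))$.

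Now choose $m$ large enough that $h^0(X,\cal O_X(\lfloor mL\rfloor))$ strictly exceeds $\binom{\lceil tm\rceil+n-1}{n}$. This is possible because $t^n<\vol(L)$ and $\binom{\lceil tm\rceil+n-1}{n}=\frac{t^n}{n!}m^n+o(m^n)$. Pick a nonzero section in $H^0(X,\cal O_X(\lfloor mL\rfloor))$ vanishing at $x$ to order at least $\lceil tm\rceil$, and let $D_m$ be its divisor of zeros. Then $D_m$ is an effective Cartier divisor with $[D_m]=[\lfloor mL\rfloor]$ in $N^1(X)$ and $\mult_xD_m\geq\lceil tm\rceil\geq tm$. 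Writing $\overline{D_m}$ for the strict transform we have
\[
\pi^*\lfloor mL\rfloor-tm\cdot E=\overline{D_m}+\bigl(\mult_xD_m-tm\bigr)E,
\]
which is effective, hence pseudo-effective. Dividing by $m$ gives that $\pi^*\bigl(\lfloor mL\rfloor/m\bigr)-tE$ is pseudo-effective. Since $\lfloor mL\rfloor/m$ converges to $L$ in $N^1(X)$ as $m\to\infty$, and the pseudo-effective cone $\Eff^1(\bl_xX)$ is closed, we conclude $\pi^*L-tE\in\Eff^1(\bl_xX)$. Letting $t\nearrow\vol^{1/n}(L)$ yields $\mu(L;x)\geq\vol^{1/n}(L)$.

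The only nontrivial input is the sections-counting argument in the second paragraph; the main obstacle is to handle the $\mathbb{R}$-divisor case cleanly, which is addressed by using the asymptotic definition of $\vol(L)$ via the integral rounds $\lfloor mL\rfloor$ and then passing to the limit inside the closed cone $\Eff^1(\bl_xX)$. Everything else (the counting of conditions to vanish at a smooth point, and the identity $\pi^*D-sE=\overline D+(\mult_xD-s)E$ for an effective Cartier divisor $D$) is formal.
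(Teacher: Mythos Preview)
Your proof is correct and follows essentially the same approach as the paper's own argument. The paper's proof is extremely terse---it simply records the binomial bound $\binom{n+e-1}{e-1}\leq\frac{(e+n-1)^n}{n!}$ on the number of conditions to vanish to order $e$ at a smooth point, recalls the asymptotic definition of $\vol(L)$, and says ``a limiting argument yields the result''; you have spelled out exactly that limiting argument (producing sections of $\lfloor mL\rfloor$ with high multiplicity, passing to the blow-up, and taking the closure of $\Eff^1(\bl_xX)$).
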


\begin{proof}There are ${{n+e-1}\choose{e-1}}\leq\frac{(e+n-1)^n}{n!}$ conditions for a member of a linear series on $X$
to vanish at $x$ with multiplicity at least $e$. Since $\vol(L)=\lim_{m\to\infty}\frac{\dim H^0(X;mL)}{m^n/n!}$, a limiting argument yields the result.
\end{proof}

\begin{cor}Let $X$ be a projective variety of dimension $n$, and let $x\in X$ be a smooth point.
Then $\sh(C;x)\leq\frac{C\cdot L}{\vol^{1/n}(L)}$ for all big $\bb R$-Cartier $\bb R$-divisors $L$
\end{cor}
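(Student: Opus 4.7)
The plan is to combine the two inequalities recorded immediately above the statement. First I would note that we may assume $C \in \Mov_1(X)$, since otherwise $\sh(C;x) = -\infty$ (or at any rate $\leq 0$) and there is nothing to prove; and we may assume $L$ is big so that $\vol(L) > 0$ and the right-hand side is a finite positive quantity divided.

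Next, the key inequality from the preceding remark gives
\[
\sh(C;x)\cdot\fuj(L;x)\ \leq\ C\cdot L
\]
for every movable curve class $C$ and every pseudo-effective $\mathbb{R}$-Cartier $\mathbb{R}$-divisor $L$, based on the intersection $(\pi^*C-\sh(C;x)\ell)\cdot(\pi^*L-\fuj(L;x)E)\geq 0$ in $\bl_xX$ (valid because $x$ is smooth, so $\pi^*C$ is defined via Definition \ref{def:pullcurve} and intersects against the exceptional $E$ as zero). Then the lemma just proved gives $\fuj(L;x)\geq\vol^{1/n}(L)$ for big $L$ at a smooth point $x$.

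Combining the two inequalities yields
\[
\sh(C;x)\cdot\vol^{1/n}(L)\ \leq\ \sh(C;x)\cdot\fuj(L;x)\ \leq\ C\cdot L,
\]
and dividing by $\vol^{1/n}(L) > 0$ gives the desired bound. Taking the infimum over all big $\mathbb{R}$-Cartier $\mathbb{R}$-divisors $L$ yields the stated corollary.

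There is really no obstacle here: both inputs are already established, and the corollary is a one-line combination. The only subtlety worth flagging is making sure the smoothness of $x$ is invoked in both places: it is needed to have a well-defined pullback $\pi^*C$ respecting numerical equivalence (hence the first inequality), and it is needed for the count of vanishing conditions in the volume estimate on $\fuj(L;x)$.
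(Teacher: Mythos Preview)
Your proposal is correct and matches the paper's approach exactly: the paper states this as an unproved corollary precisely because it follows by combining the preceding remark $\sh(C;x)\cdot\fuj(L;x)\leq C\cdot L$ with the lemma $\fuj(L;x)\geq\vol^{1/n}(L)$, which is just what you do.
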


\begin{rmk}\label{rmk:lx}Let $X$ be a smooth projective variety of dimension $n$, and let $\alpha\in\Mov_1(X)$. \cite{xi15}, \cite{lxpos} consider
$$\mathfrak M\,(\alpha):=\inf\left\{\left(\frac{\alpha\cdot L}{\vol^{1/n}(L)}\right)^{\frac n{n-1}}\st L\mbox{ is a big }\bb R\mbox{-Cartier }\bb R\mbox{-divisor}\right\}.$$
They prove that it can naturally be extended to a continuous function on $N_1(X)$
that is positive precisely in the interior of the dual cone $\Mov^1(X)^{\vee}$, which contains $\Mov_1(X)$. Note that $$\sh(\alpha;x)\leq\mathfrak M^{\frac{n-1}n}(\alpha)$$ for all $x\in X$.
\end{rmk}

\begin{cor}Let $X$ be a projective variety of dimension $n$, and let $x\in X$ be a smooth point.
Let $H$ be a big and nef $\bb R$-divisor class on $X$. Then $\sh([H^{n-1}];x)\leq (H^n)^{\frac{n-1}n}$. 
\end{cor}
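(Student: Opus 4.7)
The plan is to simply specialize the preceding corollary to the case $L = H$. That corollary gives the bound
\[
\sh(C;x) \;\leq\; \frac{C\cdot L}{\vol^{1/n}(L)}
\]
for every big $\bb R$-Cartier $\bb R$-divisor $L$, whenever $C\in\Mov_1(X)$ and $x$ is a smooth point. Setting $C=[H^{n-1}]$ and $L=H$ (which is big by hypothesis), the numerator becomes $H^{n-1}\cdot H = (H^n)$.

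For the denominator, I would invoke the standard fact that for a big and nef $\bb R$-divisor $H$ on a projective variety, the volume is simply the top self-intersection: $\vol(H) = (H^n)$. This is classical for nef $\bb Q$-Cartier $\bb Q$-divisors via asymptotic Riemann--Roch (\cite[Corollary 1.4.41]{laz04}) and extends to $\bb R$-coefficients by continuity of $\vol$ on the big cone; the $\bb R$-Weil case is covered by \cite{fkl16}. Hence $\vol^{1/n}(H) = (H^n)^{1/n}$, and the displayed bound becomes
\[
\sh([H^{n-1}];x) \;\leq\; \frac{(H^n)}{(H^n)^{1/n}} \;=\; (H^n)^{\frac{n-1}{n}},
\]
as desired. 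It is also worth checking that $[H^{n-1}]\in\Mov_1(X)$, so that the Seshadri constant is defined in the sense used throughout; this is immediate from bigness and nefness of $H$ (e.g.\ by \cite[Lemma 3.9]{fl13z}, quoted earlier in the paper).

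There is no serious obstacle here: the statement is essentially a one-line consequence of the previous corollary combined with $\vol(H)=(H^n)$ for big and nef $H$. The only point requiring a moment's attention is handling $\bb R$-coefficients, but this is routine by continuity of the volume function on the big cone.
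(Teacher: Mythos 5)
Your proof is correct and follows exactly the route the paper intends: the corollary is a direct specialization of the preceding corollary with $L=H$, using $\vol(H)=(H^n)$ for big and nef classes (with the $\bb R$-coefficient case handled by continuity of the volume on the big cone). Nothing further is needed.
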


\begin{ex}[Grassmann varieties]\label{ex:grass} Let $\ell$ be a line in the Grassmann variety $X=G(k,n)$ of $k$-dimensional subspaces of $\bb C^n$. Then $$\sh(\ell;x)=\frac 1{\min\{k,n-k\}}$$
for all $x\in X$. (Using $G(k,n)\simeq G(n-k,n)$, we may assume that $2k\leq n$.
Since $X$ is homogeneous, $\sh(\ell;x)$ is independent of $x$. Denote it $\sh(\ell)$.
Let $L$ be the very ample divisor induced by the Pl\" ucker embedding.
By \cite[Proposition 14.6.3]{fulton84} we have $\ell\cdot L=1$. 
Let $d:=(L^{k(n-k)})$ be the degree of the Grassmann variety.\footnote{
Though we don't use the formula here, it can be computed (for example by \cite[Proposition 1.10]{mukai}) as 
$d=\bigl(k(n-k)\bigr)!\prod_{1\leq i\leq k<j\leq n}(j-i)^{-1}.$}
We have $\ell=\frac 1dL^{k(n-k)-1}$. 
From Example \ref{ex:picard1} we deduce 
$\sh(\ell)=\frac{(L^{k(n-k)})}{d\cdot\mu\,(L)}=\frac 1{\mu\,(L)}$. 
The effective cone of the blow-up of the Grassmann variety at one point is computed in 
\cite[Corollary 3.2]{kop} and \cite[Lemma 7.2.2]{ri17}. It gives (also by \cite[Example 2]{RZ}) that
$\mu\,(L)=k.$ The conclusion follows.)\qed
\end{ex}

\begin{ex}[Curve in its Jacobian]
Let $C\subseteq J(C)$ be a smooth projective curve of genus $g\geq 1$ embedded
in its Jacobian with theta divisor $\theta$. Since $J(C)$ acts 
transitively on itself by translations, $\sh(C;x)$ is independent of $x$. We denote it $\sh(C)$.
By the previous corollary we find $$\sh(C)\leq\frac{g}{\sqrt[g]{g!}}< e,$$
where $e$ is Euler's constant.
(Recall that $[C]=\frac{\theta^{g-1}}{(g-1)!}$ and $(\theta^g)=g!$.)
By studying the singularities of $\theta$, we find $\sh(C)\leq \frac{g}{\nu\,(\theta)}\leq \frac{g}{[\sqrt g]},$
where $\nu\,(\theta)$ is the maximal multiplicity of $\theta$ at a point.
While the result is weaker than the asymptotic result, it is stronger than what the method above
could do for the 1-dimension linear series $|\theta|$. 
(By the Riemann singularity theorem this is the maximal dimension of $H^0(C;L)$ as $L$ ranges
over all effective divisors of degree $g-1$ in $C$. 
From the existence theorem in Brill--Noether theory (\cite[VII.(2.3) Theorem]{acgh} or \cite[Theorem 7.2.12]{laz04}), we know that $\nu\,(\theta)\geq[\sqrt g]$, while the Clifford index theorem implies $\nu\,(\theta)\leq \frac{g+1}2$. 
We have $\sh(C)\leq\frac{C\cdot\theta}{\nu\,(\theta)}=\frac g{\nu\,(\theta)}$.)

When $C$ is hyperelliptic of odd genus, then by the Clifford index theorem the bound
$\nu\,(\theta)=\frac{g+1}2$ is achievable by points corresponding to multiples of the unique $g^1_2$. We get the better bound
$$\sh(C)\leq\frac {2g}{g+1}<2<e.$$

When $C$ is not hyperelliptic of genus $g\geq 3$,
$$\sh(C)\leq\frac g2.$$
This result is only sharper than the first for $g=3$.
(A result of J. Fay (\cite[(1.2)]{pp01}) states that the elements of $|2\theta|$ with multiplicity at least 4 at the neutral element $o\in J(C)$
are precisely those that contain the difference variety $C-C$. The dimension of this subspace
is $2^g-1-{g\choose 2}$ (see \cite[(1.3)]{pp01}), so it is nonempty. Then $\sh(C)\leq\frac{C\cdot 2\theta}4=\frac{2g}4=\frac g2$.)
This result can be improved for some low genera by \cite[Chapter 6]{pp01}.
\end{ex}

By \cite{kong}, at least when $J(C)$ has Picard rank 1, the Seshadri constant $\sh(\theta)$
distinguishes between hyperelliptic and non-hyperelliptic curves. This is no longer the case for $\sh(C)$.

\begin{ex}[Curves of genus 3 whose Jacobian has Picard rank 1]\label{ex:g3r1} 
In this case we prove that $$\sh(C)=\frac 32.$$ 
(Assume first that $C$ is not hyperelliptic. The difference divisor $C-C$, which has multiplicity $2g-2=4$
at the origin and class $2\theta$, gives the upper bound $\sh(C)\leq\frac 32$. For the lower bound,
assume there exists an irreducible divisor $D$ of class $a\theta$ (necessarily proportional to $\theta$
since $J(C)$ has Picard rank 1 by assumption) with multiplicity $b$ at the origin $o\in J(C)$,
such that $\frac{C\cdot D}{\mult_oD}<\frac 32$, equivalently $b>2a$.
On the blow-up $\pi:\bl_oJ(C)\to J(C)$ with exceptional divisor $E$ we consider the product
$$\bigl(\pi^*\theta-\frac{12}7E\bigr)\cdot(\pi^*a\theta-bE)\cdot(\pi^*2\theta-4E)=12a-\frac{48}7b<12a-\frac{96}7a=-\frac{12}7a<0.$$
The first class is nef since $\sh(\theta)=\frac{12}7$ by \cite[Theorem 1.1.(2)]{kong}. The next two
classes are represented by the strict transforms of the distinct irreducible divisors $D$ and $C-C$, hence intersect properly.
Then the product of the 3 classes is nonnegative, which is a contradiction.
\vskip.25cm
When $C$ is hyperelliptic, then $C-C$ is a theta divisor (now the difference map $C\times C\to J(C)$ is generically 2-to-1 over its image). By the Clifford index theorem and the Riemann singularity theorem,
the singularities of any theta divisor have multiplicity 2. Then $C-C$ gives the bound $\sh(C)\leq\frac 32$. Consider the product on the blow-up of $J(C)$ at a singular point of $C-C$
$$\bigl(\pi^*\theta-\frac 32E)\cdot(\pi^*a\theta-bE)\cdot(\pi^*\theta-2E)=6a-3b<0,$$
where $a,b$ are as in the non-hyperelliptic case. The class $\pi^*\theta-\frac 32E$ is nef because
$\sh(\theta)=\frac 32$ by \cite[Theorem 1.1.(1)]{kong}. We get a contradiction as in the previous case.) \qed
\end{ex}

\begin{rmk}For very general (non-hyperelliptic) curves $C$ we have that ${\rm rk}N^1(J(C))=1$ (for example by \cite[Lemma on page 359]{acgh}).
This is also true for very general hyperelliptic curves of positive genus (The group $N^1(J(C))_{\bb Z}$ injects into ${\rm End}(J(C))$ by \cite[Corollary 19.1 and Corollary 19.2]{mumford}. Then \cite{zarhin}
and \cite{clark} prove that ${\rm End}(J(C))=\bb Z$ for very general hyperelliptic curves.)  
\end{rmk}

\begin{cor}If $C$ is a curve of genus 3 with ${\rm rk}\,N^1(J(C))=1$, let $\pi:X\to J(C)$ be the blow-up of the origin with exceptional divisor $E$. Then $\Eff^1(X)=\langle E,\pi^*\theta-2E\rangle$,
while $\Nef^1(X)=\langle\pi^*\theta,\pi^*\theta-\sigma E\rangle$, where $\sigma=\frac{12}7$ if $C$ is not hyperelliptic, and $\sigma=\frac 32$ if $C$ is hyperelliptic.
\end{cor}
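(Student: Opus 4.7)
The plan is to deduce the corollary as a direct formal consequence of Example \ref{ex:picard1}, combined with the value of $\sh(C)$ already obtained in Example \ref{ex:g3r1}.

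Since by hypothesis $N^1(J(C))$ has rank $1$ with ample generator $\theta$, the blow-up $X = \bl_oJ(C)$ has Picard rank $2$ with generators $\pi^*\theta$ and $E$. Example \ref{ex:picard1} then yields the cone descriptions
\begin{align*}
\Eff^1(X) &= \bigl\langle \pi^*\theta - \fuj(\theta;o)\,E,\ E \bigr\rangle, \\
\Nef^1(X) &= \bigl\langle \pi^*\theta - \sh(\theta;o)\,E,\ \pi^*\theta \bigr\rangle.
\end{align*}
It therefore suffices to identify the two scalars $\sh(\theta;o)$ and $\fuj(\theta;o)$.

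The Seshadri constant $\sh(\theta;o)=\sigma$ is exactly Kong's computation already invoked in Example \ref{ex:g3r1}: $\sigma=\tfrac{12}{7}$ in the non-hyperelliptic case and $\sigma=\tfrac{3}{2}$ in the hyperelliptic case. For the Fujita--Nakayama invariant I would use the second duality relation recorded in Example \ref{ex:picard1},
$$\sh(\theta^{n-1};o)\cdot\fuj(\theta;o) = (\theta^n) = 3! = 6.$$
By Poincaré's formula $[C]=\theta^{g-1}/(g-1)! = \theta^2/2$, so the $1$-homogeneity of $\sh(\cdot\,;o)$ on $\Mov_1(J(C))$ gives $\sh(\theta^2;o) = 2\,\sh(C;o) = 2\cdot\tfrac{3}{2} = 3$, where $\sh(C;o)=\tfrac{3}{2}$ is precisely the content of Example \ref{ex:g3r1}. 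Substituting yields $\fuj(\theta;o) = 6/3 = 2$, which is the remaining numerical input into the cone formulas.

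The real substance of the argument lies not in this corollary but in Example \ref{ex:g3r1} itself, where $\sh(C)=\tfrac{3}{2}$ was obtained by combining Kong's value of $\sh(\theta)$ with a triple intersection on the blow-up. Once that value is in hand, the present corollary is a formal application of the Picard-rank-one dualities together with a single numerical evaluation; no new geometric input is required, and there is no real obstacle to overcome.
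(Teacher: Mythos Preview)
Your proposal is correct and follows essentially the same route as the paper: both use Example \ref{ex:picard1} to express the two cone boundaries via $\sh(\theta;o)$ and $\fuj(\theta;o)$, invoke Kong for the former, and compute the latter from the duality $\sh(\theta^2;o)\cdot\fuj(\theta;o)=(\theta^3)=6$ together with $[C]=\theta^2/2$ and $\sh(C;o)=\tfrac32$ from Example \ref{ex:g3r1}. Your write-up is simply more explicit about the numerics than the paper's two-line sketch.
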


\begin{proof}The boundary of the nef cone is determined by $\sh(\theta)$, which is computed in \cite{kong}. The boundary of the effective cone is determined by $\sh(C)$ as in Example \ref{ex:picard1}.
Use $[C]=\frac{\theta^2}{2}$ and $\theta^3=6$.
\end{proof}

\section{Seshadri constants for nef dual classes}

When $X$ is smooth projective of dimension $n$,
the space $N^{n-1}(X)$ is naturally identified with $N_1(X)$. The identification sends $\Nef^{n-1}(X)$
to $\Mov_1(X)$ by \cite[Theorem 11.4.19]{laz04}. In general we only have a linear surjection $N^{n-1}(X)\to N_1(X)$.





\begin{ex}Let $X\subseteq\bb P^N$ be a projective cone over a smooth projective variety
$Y\subseteq\bb P^{N-1}$ of dimension $n-1$. By \cite[Example 2.8]{flpos}, we have that 
$N_{k}(X)\simeq N_{k-1}(Y)$ for all $1\leq k\leq n-1$. In particular $N_1(X)\simeq\bb R$,
while $N_{n-1}(X)\simeq N^1(Y)$ can be arbitrarily large.
The cone $\Nef^{n-1}(X)$ is full-dimensional (\cite[Lemma 3.7]{flpos}) and salient/pointed/strict (\cite[Remark 2.14]{flpos}) in $N^{n-1}(X)$. In this case it surjects onto $\Mov_1(X)$, which is the non-negative
half line in $N_1(X)\simeq\bb R$.
\end{ex}

The definition of Seshadri constants for movable curves carries the inconvenience of 
asking that $L$ is Cartier. In particular, in the singular case, we can not restrict to irreducible
divisors as in Remark \ref{rmk:irredenough}. This difficulty is no longer present when considering 
dual classes, even in arbitrary codimension.

\begin{defn}Let $X$ be a projective variety, and let $\alpha\in\Nef^k(X)$.
For any $x\in X$ define
$$\sh(\alpha;x):=\inf\left\{\frac{\alpha\cdot Z}{\mult_xZ}\st Z\mbox{ effective }k\mbox{-cycle on }X\mbox{ containing x}\right\}.$$
We may restrict to the case where $Z$ is an irreducible subvariety of dimension $k$.
\end{defn}

Another advantage of dual classes over numerical classes is that they pullback.
Moreover, the projection formula shows that pulling back preserves nefness.

\begin{ex}When $k=1$ we recover the case of nef $\bb R$-Cartier $\bb R$-divisor.
When $k=n-1$ and $X$ is smooth we recover the case of movable curves.
\end{ex}

\begin{ex}Let $X$ be a possibly singular projective variety of dimension $n$ and choose $x\in X$.
Let $\alpha\in\Nef^{n-1}(X)$. Then $\alpha\cap[X]$ is a movable curve class.
We have $$\sh(\alpha;x)\leq\sh(\alpha\cap[X];x).$$ (The infimum on the right runs over Cartier divisors through $x$,
while one the one the left runs over the larger set of Weil divisors through $x$.)
\end{ex}

\begin{ex}[The inequality may be strict]\label{seshnefvsmov}Let $X\subseteq\bb P^N$ be a smooth projective surface, and let $C$ be a projective cone over $X$
with vertex $o$.
We have a (noncommutative) diagram
\begin{displaymath}
\xymatrixcolsep{4pc}\xymatrixrowsep{4pc}\xymatrix{Z\ar[r]^{\pi}\ar[d]_{\sigma} & X
\ar@{_{(}->}[dl]_{\imath}\\ C & }
\end{displaymath}
where $Z=\bb P_X(\cal O_X(1)\oplus\cal O_X)$ with projective bundle map $\pi:Z\to X$.
The morphism $\sigma$ is the blow-down of the section $X_0$ of $\pi$
corresponding to the projection on the second component $\cal O_X(1)\oplus\cal O_X\to\cal O_X$.
The embedding $\imath$ is the image via $\sigma$ of the section $X_1$ of $\pi$ corresponding to the projection
onto the first component $\cal O_X(1)\oplus\cal O_X\to\cal O_X(1)$. It is also the intersection of $C\subseteq\bb P^{N+1}$ with the $\bb P^N$ supporting $X$.
 
Let $\alpha\in\Nef^1(X)$. Since $\imath$ is the embedding of a very ample divisor,
it is not hard to see that the class $\imath_!\alpha\in N^2(C)$ defined by $\imath_!\alpha\cap[D]:=\alpha\cap\imath^*[D]$ for every $[D]\in N_2(C)$ is also nef.
We compute $$\sh(\imath_!\alpha;o)=\min\bigl\{t\geq 0\st \alpha-th\in\Nef^1(X)\bigr\},$$ 
where $h:=c_1(\cal O_X(1))$. 
(Let $D$ be an effective divisor on $C$ all of whose components pass through $o$.
Considering such divisors is sufficient for computing the Seshadri constant. 
Since $D$ and $\imath (X)$ meet properly, the class $\imath^*[D]$ is represented by a well defined effective 1-cycle $L$. 
Then $\imath_!\alpha\cap[D]=\alpha\cap\imath^*[D]=\alpha\cap[L]$.
We have $[D]=\sigma_*\pi^*[L]$. 
The strict transform of the cycle $D':=\sigma_*\pi^{-1}L$ is $\overline{D'}:=\pi^{-1}L$.
Then $\mult_oD'=-[\overline D']\cdot [X_0]^2=-\pi^*[L]\cdot [X_0]^2=h\cap [L]$. 
We prove that $\mult_oD'\geq\mult_oD$. Assuming this, and using that for every effective $L$ on $X$
the divisor $\sigma_*\pi^*L$ is effective and all of its components pass through $o$, we deduce 
$$\sh(\imath_!\alpha;o)=\inf\left\{\frac{\alpha\cap[L]}{h\cap[L]}\st L\mbox{ effective on }X\right\}
.$$
The claimed Seshadri constant computation is then a consequence of the duality between $\Nef^1(X)$ and $\Eff_1(X)$. For the claim on multiplicities, let $\overline D$ be the strict transform of $D$. Since $\sigma_*[\overline D]=[D]$, necessarily
$[\overline D]=\pi^*[L]+a[X_0]$ for some $a\in\bb R$. Since $\pi_*[\overline D]=a[X]$ is effective on $X$, necessarily $a\geq 0$. Then $\mult_oD=-[\overline D]\cdot[X_0]^2=(\pi^*[L]+a[X_0])\cdot\pi^*h\cdot[X_0]=([L]-ah)\cdot h\leq [L]\cdot h$.)
\vskip.25cm
Next we compute
$$\sh(\imath_!\alpha\cap[C];o)=\frac{\alpha\cdot h}{h^2}.$$
(Every nonzero effective Cartier divisor $D$ on $C$ is equivalent to $a\imath(X)$ for some $a>0$.
Furthermore $\imath^*[D]=ah$.
Then $(\imath_!\alpha\cap[C])\cap[D]=\alpha\cap(\imath^*[D]\cap[X])=\alpha\cdot h$.
For an arbitrary effective divisor $L$ with $\cal O_X(L)\simeq\cal O_X(a)$, the divisor $D':=\sigma_*\pi^*L$
is Cartier on $C$ passing through $o$. The multiplicity at $o$ is $-\pi^*L\cdot[X_0]^2=ah^2$.
As before, we prove $\mult_oD'\geq\mult_oD$. 
Let $\overline D$ be the strict transform of $D$ on $Z$. 
The only classes that push to $[D]$ have form $\pi^*\imath^*[D]+b[X_0]$ for some $b$.
Set then $[\overline D]=a\pi^*h+b[X_0]$. By pushing to $X$, we find $b\geq 0$.
Using that $\overline D$ and $X_0$ meet properly, we deduce that $(a-b)h$ is effective, so $a\geq b$.
It follows that $\mult_oD=-[\overline D]\cdot[X_0]^2=(a\pi^*h+b[X_0])\cdot\pi^*h\cdot[X_0]=(a-b)h^2\leq ah^2=\mult_oD'$. The formula for the Seshadri constant follows easily.)
\vskip.25cm
If $X$ is the blow-up of $\bb P^2$ at one point and $\alpha$ is the pullback of a line class from $\bb P^2$, then $\alpha\cdot L=0$ for $L$ the exceptional $\bb P^1$ and we find
$\sh(\imath_!\alpha;o)=0<\sh(\imath_!\alpha\cap[C];o)$ for any ample $h$ on $X$.
\qed
\end{ex}

\begin{remark}In the previous example, we see that $\sh(\imath_!\alpha;o)$ is a more subtle invariant
than $\sh(\imath_!\alpha\cap[C];o)$. This suggests that on singular varieties it may be more
fruitful to study nef ``dual'' classes than movable classes.   
\end{remark}

\begin{ex}By comparison, Seshadri constants at the vertex of a cone are less subtle for nef divisors.
In fact $\sh([X_1];o)=1$. (Indeed if $\ell$ is a line through the vertex, then $\frac{[X_1]\cdot\ell}{\mult_o\ell}=\frac 11=1$.
For any other curve $T$ through $o$, we have $\frac{[X_1]\cdot T}{\mult_o T}=\frac{[X_1]\cdot\overline T}{[X_0]\cdot\overline T}$, 
where $\overline T$ denotes the strict transform of $T$ on $Z$. However $([X_1]-[X_0])\cdot\overline T=h\cdot\pi_*\overline T>0$,
leading to $\frac{[X_1]\cdot T}{\mult_o T}>1$.)\qed
\end{ex}

\begin{ex}[Irrational Seshadri constant for nef classes]Let $X$ be an abelian surface with a round
nef cone as in \cite[Example 2.3.8]{laz04}. Then for general choices of integral classes $h$ and $\alpha$, the number
$\min\{t\st \alpha-th\in\Nef^1(X)\}=\sh(\imath_!\alpha;o)$ is quadratic irrational. 
\vskip.2cm
\noindent {\bf N.B.} It is not obvious what one should mean by an ``integral'' dual class in $N^k(X)$ (coming from a weight $k$ Chern polynomial with integer coefficients vs. taking integer values on $N_k(X)$). The class $\imath_!\alpha$ takes integer values on $N_2(C)_{\bb Z}$, 
so with either definition it is at least rational.\qed
\end{ex}

\begin{rmk}For fixed $x\in X$, the function $\alpha\mapsto\sh(\alpha;x)$ is $1$-homogeneous, nonnegative, and concave on $\Nef^k(X)$.
\end{rmk}

\begin{rmk}
Semi-continuity-type statements analogous to Proposition \ref{prop:seshadrisemicontinuity} and Lemma \ref{lem:movingjumps} 
hold for nef classes as well.
\end{rmk}

\begin{ex}[Toric varieties] Let $X=X(\Delta)$ be a projective toric variety, possibly singular and let $x_{\sigma}$ be a torus-invariant point. 
Let $\alpha\in\Nef^k(X)$. Then
$$\sh(\alpha;x_{\sigma})=\min\left\{\frac{\alpha\cdot V_{\theta}}{\mult_{x_{\sigma}}V_{\theta}}\st \theta\mbox{ is an }(n-k)\mbox{-dimensional subcone of }\sigma\right\}.$$
Here $V_{\theta}$ denotes the $k$-dimensional torus-invariant subvariety of $X$
corresponding to $\theta$. (Multiplicity is upper semi-continuous in families, so the maximal multiplicity at $x_{\sigma}$ of effective cycles in any given class
is achieved by torus-invariant ones.) 
\end{ex}

\begin{lem}\label{lem:nefkinteriorlowerbound}If $\alpha$ is in the strict interior of $\Nef^k(X)$, then there exists
a constant $\varepsilon>0$ such that $\sh(\alpha;x)\geq\varepsilon$ for all $x\in X$.
\end{lem}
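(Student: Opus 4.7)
The plan is to follow the blueprint of Lemma \ref{lem:uniformlowerbound}, translated to dual classes in codimension $k$. The reformulation stated immediately after the definition of $\sh(\alpha;x)$ in this section tells us that it suffices to exhibit $\varepsilon > 0$, independent of $x \in X$, with $\pi^*\alpha + \varepsilon(-E)^k \in \Nef^k(\bl_x X)$ for every $x$, where $\pi:\bl_x X \to X$ is the blow-up at $x$ with exceptional divisor $E$.

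First I would fix an ample Cartier divisor $H$ on $X$ and choose an integer $m$ with $mH$ very ample. The classical Seshadri criterion then guarantees $\sh(H;x) \geq 1/m$ for every $x \in X$, i.e.\ $\pi^*H - \tfrac 1m E$ is a nef $\bb R$-divisor on $\bl_x X$ uniformly in $x$. Its $k$-th self-intersection is therefore a nef class in $N^k(\bl_x X)$ by Kleiman-type positivity for powers of nef divisors, and the key cross-term computation is
$$\bigl(\pi^*H - \tfrac 1m E\bigr)^k = \pi^*H^k + \tfrac 1{m^k}(-E)^k.$$
This collapses from the binomial expansion because, for every $1 \leq j \leq k-1$, the class $E^j$ is supported on $E$ while $\pi^*H$ restricts to zero on $E$, so each mixed term $(\pi^*H)^{k-j}\cdot E^j$ vanishes numerically.

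Second, since $\alpha$ is strictly interior to $\Nef^k(X)$ and $H^k \in \Nef^k(X)$, I can find $\delta > 0$ with $\beta := \alpha - \delta H^k$ still in $\Nef^k(X)$. Pullback along the projective morphism $\pi$ preserves nefness of dual classes (the projection formula, as noted just before the definition in this section), so $\pi^*\beta$ is nef. Consequently
$$\pi^*\alpha + \frac{\delta}{m^k}(-E)^k = \pi^*\beta + \delta\bigl(\pi^*H - \tfrac 1m E\bigr)^k$$
is a sum of two nef classes in $N^k(\bl_x X)$, hence nef. Taking $\varepsilon := \delta/m^k$ and invoking the blow-up reformulation yields $\sh(\alpha;x) \geq \varepsilon$ uniformly in $x$.

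The main obstacle I expect is the mixed-term computation above: it must be justified when $X$ (and hence $\bl_x X$) is singular, where one has to be careful that the cross terms $(\pi^*H)^{k-j} E^j$ are numerically trivial on all of $\bl_x X$, not merely on $E$. This follows from the projection-formula observation that intersection with any class supported on $E$ can be computed by first restricting to $E$, where $\pi^*H$ is trivial. Everything else reduces to formal manipulations with cones and the preservation of nefness under pullback.
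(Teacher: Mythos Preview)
Your argument is correct and follows essentially the same strategy as the paper's proof: decompose $\alpha=\beta+\delta H^k$ with $\beta$ nef, then use that $\pi^*H-\tfrac1mE$ is nef (uniformly in $x$) together with the collapsing binomial identity $(\pi^*H-\tfrac1mE)^k=\pi^*H^k+\tfrac1{m^k}(-E)^k$ to produce the uniform lower bound $\delta/m^k$. One small inaccuracy: the blow-up reformulation $\sh(\alpha;x)=\max\{t:\pi^*\alpha+t(-E)^k\in\Nef^k(\bl_xX)\}$ does not appear ``immediately after the definition'' but rather as a Proposition stated \emph{after} the present Lemma in the section; since its proof does not rely on this Lemma there is no circularity, but you could just as easily bypass it by arguing directly from the definition via $\alpha\cdot Z\geq\delta\,H^k\cdot Z=\delta\,\pi^*H^k\cdot\overline Z\geq\tfrac{\delta}{m^k}\mult_xZ$, which is closer to what the paper intends by ``analogously to Lemma \ref{lem:uniformlowerbound}.''
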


\begin{proof}By \cite[Corollary 3.15]{flpos}, complete intersections are in the strict interior of $\Nef^k(X)$. Thus by concavity and nonnegativity it is enough to consider the case where 
$\alpha=[H^{k}]$ for some ample divisor $H$ on $X$. This case is treated analogously
to Lemma \ref{lem:uniformlowerbound}.
\end{proof}

When $x$ is a smooth point, we could also deduce the previous lemma from the following

\begin{lem}\label{lem:seshadriproduct}Let $x\in X$ be a smooth point. Let $\alpha\in\Nef^k(X)$, and let $H$  be an ample $\bb R$-divisor class. Then $\sh(\alpha\cdot H;x)\geq\sh(\alpha;x)\sh(H;x)$.
\end{lem}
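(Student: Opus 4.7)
The plan is to argue on the blow-up $\pi\colon\bl_xX\to X$ with exceptional divisor $E\simeq\bb P^{n-1}$, using the characterization
$$\sh(\alpha;x)=\max\bigl\{t\geq 0\st \pi^*\alpha+t(-E)^k\in\Nef^k(\bl_xX)\bigr\}$$
and its analogue for the divisor $H$ stated in the paper. Set
$$\beta:=\pi^*\alpha+\sh(\alpha;x)(-E)^k\in\Nef^k(\bl_xX),\qquad M:=\pi^*H-\sh(H;x)E\in\Nef^1(\bl_xX).$$
If I can show that $\beta\cdotp M$ equals $\pi^*(\alpha\cdot H)+\sh(\alpha;x)\sh(H;x)(-E)^{k+1}$ and lies in $\Nef^{k+1}(\bl_xX)$, the blow-up characterization of $\sh(\alpha\cdot H;x)$ will finish the proof.

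For the explicit computation I would expand $\beta\cdotp M$ and show that the two cross-terms vanish. Since $x$ is a smooth point and $k\geq 1$, the restriction $j_x^*\alpha\in N^k(\{x\})=0$, so by the base-change for the square $E\hookrightarrow\bl_xX\to X\hookleftarrow\{x\}$ one has $\pi^*\alpha\cdotp E=j_*\pi|_E^*j_x^*\alpha=0$ in $N^{k+1}(\bl_xX)$; here $j\colon E\hookrightarrow\bl_xX$. The same reasoning applied to $H\in N^1(X)$ gives $\pi^*H\cdotp E=0$, whence $\pi^*H\cdotp(-E)^k=(-E)^{k-1}\cdotp(\pi^*H\cdotp(-E))=0$ as well. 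What survives is exactly $\pi^*(\alpha\cdotp H)+\sh(\alpha;x)\sh(H;x)(-E)^{k+1}$, as desired.

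The main obstacle is the nefness step: I need to know that $\Nef^1(Y)\cdotp\Nef^k(Y)\subseteq\Nef^{k+1}(Y)$ for $Y=\bl_xX$. To see it, fix any ample class $A$ on $Y$ and any $\epsilon>0$. Then $M+\epsilon A$ is ample, so after passing to a sufficiently divisible multiple it is represented by a genuine effective Cartier divisor. For any irreducible $(k+1)$-dimensional subvariety $V\subseteq Y$, a general such representative meets $V$ properly and gives an effective $k$-cycle whose class equals $[M+\epsilon A]\cdotp[V]$. Pairing with $\beta\in\Nef^k(Y)$ yields $\beta\cdotp(M+\epsilon A)\cdotp V\geq 0$, and letting $\epsilon\to 0$ gives $\beta\cdotp M\cdotp V\geq 0$. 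Since effective $(k+1)$-cycles are positive combinations of such $V$, we conclude $\beta\cdotp M\in\Nef^{k+1}(Y)$.

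Putting the pieces together, $\pi^*(\alpha\cdotp H)+\sh(\alpha;x)\sh(H;x)(-E)^{k+1}\in\Nef^{k+1}(\bl_xX)$, so by the maximality in the blow-up description of $\sh(\alpha\cdotp H;x)$ we obtain $\sh(\alpha\cdotp H;x)\geq\sh(\alpha;x)\sh(H;x)$.
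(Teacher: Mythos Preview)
Your proof is correct, and it takes a genuinely different route from the paper's. The paper argues directly from the definition: for each $(k+1)$-dimensional subvariety $Z$ through $x$, it invokes jet separation for the \emph{ample} divisor $H$ to produce effective $\bb R$-divisors of class $H$ with multiplicity close to $\sh(H;x)$ at $x$ and meeting $Z$ properly; the resulting proper intersection $H\cdot Z$ is then an effective $k$-cycle $Y_m$ with $\mult_xY_m\geq(\sh(H;x)-\tfrac1m)\mult_xZ$ (using smoothness of $x$), and applying the definition of $\sh(\alpha;x)$ to $Y_m$ gives the bound.

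Your argument instead exploits the blow-up characterization proved just before the lemma together with the elementary fact $\Nef^1\cdot\Nef^k\subseteq\Nef^{k+1}$, and the vanishing of the cross-terms $\pi^*\alpha\cdot E$ and $\pi^*H\cdot E^k$ (both correct, ultimately because any class supported on $E$ of positive dimension pushes to zero on $X$). This is cleaner and more conceptual; it also does not use ampleness of $H$ at all---only that $\pi^*H-\sh(H;x)E$ is nef---so your argument in fact proves the inequality for any nef $\bb R$-divisor $H$, a mild strengthening of the stated lemma. The paper's approach, by contrast, is more hands-on and stays closer to the geometric picture of cycles and multiplicities, but genuinely relies on jet separation, hence on ampleness.

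One cosmetic remark: your ``base-change for the square'' justification for $\pi^*\alpha\cdot E=0$ is a bit informal in the $N^k$ framework; the cleanest phrasing is the one you implicitly use anyway---pair $\pi^*\alpha\cdot E$ with an arbitrary $(k+1)$-cycle $W$, note that $E\cap[W]$ is a $k$-cycle supported on $E$, and apply the projection formula together with $\pi_*(E\cap[W])=0$ for $k\geq1$.
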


\begin{proof}Jet separation for divisors implies that for every $k+1$-dimensional subvariety $Z$ through $x$ we can find an $\bb R$-divisor of class $H$ through $x$ meeting $Z$ properly and with multiplicity arbitrarily close to $\sh(H;x)$ at $x$.
The intersection $H\cdot Z$ is then represented by the limit of (the classes of the elements of) a sequence of effective $k$-cycles $Y_m$ with $\mult_xY_m\geq \bigl(\sh(H;x)-\frac 1m\bigr)\mult_xZ$. The last part uses that $x\in X$ is smooth. From $\frac{(\alpha\cdot H)\cdot Z}{\mult_xZ}\geq \frac{\alpha\cdot (H\cdot Z)}{\mult_xY_m}\cdot\bigl(\sh(H;x)-\frac 1m\bigr)\geq\sh(\alpha;x)\bigl(\sh(H;x)-\frac 1m\bigr)$ we conclude by taking 
$m\to\infty$ and then the infimum over all $Z$.
\end{proof}

As we did for movable curves we deduce

\begin{cor}The function $\sh(\cdot;x)$ is locally uniformly continuous on the strict interior of $\Nef^k(X)$.
\end{cor}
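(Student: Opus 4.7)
The plan is to reduce the statement to a general fact about homogeneous concave functions on convex cones, exactly as was done for movable curves earlier in the paper. Specifically, I would first record that $\sh(\cdot;x):\Nef^k(X)\to\bb R$ is $1$-homogeneous, nonnegative, and concave — these three properties were already observed in the remark immediately preceding Lemma \ref{lem:nefkinteriorlowerbound}, and follow formally from the infimum definition together with linearity of intersection and multiplicity. The interior positivity is supplied by Lemma \ref{lem:nefkinteriorlowerbound}: for any $\alpha$ in the strict interior of $\Nef^k(X)$ there is a uniform lower bound $\sh(\alpha;x)\geq\varepsilon>0$ valid for every $x\in X$, so in particular $\sh(\alpha;x)>0$.

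With these ingredients in hand, I would invoke \cite[Lemma 2.7]{Leh16} (the same citation used for the movable curve case), which asserts that a $1$-homogeneous nonnegative concave function on a closed convex cone is locally Lipschitz, hence locally uniformly continuous, on the locus where it is strictly positive — in particular on the strict interior of the cone, once positivity there has been established. Since the cited result is stated in abstract convex-analytic terms, it applies verbatim to $\sh(\cdot;x)$ on $\Nef^k(X)$ for any fixed $x$.

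The only thing to check is that $\sh(\cdot;x)$ really is concave on all of $\Nef^k(X)$ (not just at a single point), but this is immediate: for $\alpha,\beta\in\Nef^k(X)$ and any effective $k$-cycle $Z$ through $x$, we have
\[
\frac{(\alpha+\beta)\cdotp Z}{\mult_xZ}=\frac{\alpha\cdotp Z}{\mult_xZ}+\frac{\beta\cdotp Z}{\mult_xZ}\geq\sh(\alpha;x)+\sh(\beta;x),
\]
and taking the infimum over $Z$ gives superadditivity, which combined with $1$-homogeneity yields concavity. No obstacle of substance should arise: everything reduces to the two lemmas already in place plus a citation. The minor wrinkle is that the word "locally" must refer to the Euclidean topology on $N^k(X)$ as a finite-dimensional $\bb R$-vector space, which is the setting in which \cite[Lemma 2.7]{Leh16} is formulated, so no additional work is needed to transport the result across.
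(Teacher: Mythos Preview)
Your proposal is correct and matches the paper's approach exactly: the paper simply writes ``As we did for movable curves we deduce'' the corollary, and the movable-curve proof consists precisely of invoking positivity on the interior (here from Lemma \ref{lem:nefkinteriorlowerbound}) together with \cite[Lemma 2.7]{Leh16}. Your additional verification of concavity is fine but unnecessary, since the paper already records homogeneity, nonnegativity, and concavity in the remark preceding Lemma \ref{lem:nefkinteriorlowerbound}.
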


\begin{conj}[Seshadri criterion] Let $X$ be a projective variety. Let $\alpha\in\Nef^k(X)$
such that there exists $\varepsilon>0$ with $\sh(\alpha;x)\geq\varepsilon$ for all $x\in X$.
Then $\alpha$ is in the strict interior of $\Nef^k(X)$.
\end{conj}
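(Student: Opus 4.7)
The plan is to adapt the strategy of Theorem A, with $\Nef^k(X)$ in place of $\Mov_1(X)$ and $\Eff_k(X)$ in place of $\Eff^1(X)$. Suppose for contradiction that $\sh(\alpha;x) \geq \varepsilon > 0$ uniformly in $x$, yet $\alpha$ lies on $\partial \Nef^k(X)$. By the definition $\Nef^k(X) = \Eff_k(X)^{\vee}$ and separation of convex cones, there is a nonzero class $[Z] \in \Eff_k(X)$ with $\alpha \cdot Z = 0$.

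First I would dispose of the easy case in which $[Z]$ (or some positive multiple) admits a nonzero effective $k$-cycle representative $Z'$: for any $x \in \Supp(Z')$ one has $\frac{\alpha \cdot Z'}{\mult_x Z'} = 0$, immediately contradicting the hypothesis. The substance of the argument is the remaining case, where $[Z]$ sits on $\partial\,\Eff_k(X)$ but is not represented by any effective cycle. Fix an ample divisor $H$ and set $[Z_t] := [Z] + t[H^k]$ for $t > 0$. These classes lie in the strict interior of $\Eff_k(X)$, hence admit effective representatives $Z_t$, which I normalize by fixing $H^{n-k} \cdot Z_t$. Then $\alpha \cdot Z_t \to 0$ as $t \to 0^+$. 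A contradiction would follow if one could exhibit a sequence of points $x_t \in \Supp(Z_t)$ with $\mult_{x_t} Z_t$ bounded below by a uniform positive constant, since then
\[
\sh(\alpha; x_t) \;\leq\; \frac{\alpha \cdot Z_t}{\mult_{x_t} Z_t} \;\longrightarrow\; 0.
\]

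Producing such uniformly positive multiplicities is the main obstacle. In the known case $k = n-1$ with $X$ smooth (where the conjecture reduces to Theorem A), the analogous step is won via the divisorial Zariski decomposition of \cite{nak} combined with cohomological growth estimates on $h^0(\lfloor mL\rfloor + A)$; for higher codimension cycles no such cohomological tool is available. A plausible substitute is the mobility formalism of \cite{flpos} together with the polar-transform theory developed in \cite{lxpos}: if every boundary ray of $\Eff_k(X)$ admits a representative with strictly positive mobility-type asymptotic multiplicity at a general point, then the sequence $Z_t$ can be refined to acquire the required forced multiplicity and an argument parallel to Theorem A closes the loop.

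As an auxiliary step I would first reduce to smooth $X$ via a nonsingular alteration $\pi: \widetilde X \to X$ (as in the last part of the proof of Theorem A). Pullback of dual classes preserves nefness by the projection formula, so $\pi^{*}\alpha$ is nef on $\widetilde X$; Lemma \ref{lem:multiplicitypush} lets us transfer multiplicity estimates between $\widetilde X$ and $X$ at points where $\pi$ is finite and the target is smooth, so a uniform Seshadri bound on $X$ yields one on $\widetilde X$ after restricting to a suitable Zariski-open locus. This reduction, combined with the mobility-based multiplicity estimate sketched above, is the most promising route to settle the conjecture; the heart of the matter, and the step I have no direct substitute for, remains the asymptotic multiplicity control for effective representatives of a class approaching $\partial\,\Eff_k(X)$ when $1 < k < n-1$.
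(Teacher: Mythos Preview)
The statement you are attempting to prove is labeled a \emph{Conjecture} in the paper; there is no proof of it for general $k$. The paper only verifies the curve case $k=n-1$ (Proposition~\ref{prop:seshadrinefdualcurves}), and explicitly leaves the intermediate codimensions open. So there is no ``paper's own proof'' to compare against, and your proposal is in fact a sketch of an approach to an open problem, not a proof.

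That said, your diagnosis of the obstruction is accurate and matches the paper's implicit reasoning. For $k=n-1$ the paper's argument runs exactly along the lines you describe for Theorem~A: pick a nonzero $[L]\in\Eff_{n-1}(X)$ with $\alpha\cdot[L]=0$, lift it to a nonsingular alteration $\pi:\widetilde X\to X$ via \cite[Corollary 3.22]{flpos}, take the divisorial Zariski decomposition $\bar L=P_\sigma(\bar L)+N_\sigma(\bar L)$ on $\widetilde X$, and use Nakayama's linear growth of $h^0$ for the positive part to produce effective divisors on $\widetilde X$ with unbounded multiplicity at a chosen point, then push down. The step that fails for $1<k<n-1$ is precisely the one you isolate: there is no analogue of the divisorial Zariski decomposition or of the $h^0$ growth estimate for higher-codimension cycles, and the mobility formalism you mention does not currently furnish the required asymptotic multiplicity control.

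One technical issue with your reduction step: transferring the uniform Seshadri bound from $X$ to $\widetilde X$ via Lemma~\ref{lem:multiplicitypush} only gives $\sh(\pi^*\alpha;\tilde x)\geq\varepsilon$ at points $\tilde x$ over the finite locus of $\pi$, not on all of $\widetilde X$. So you cannot simply invoke the conjecture on $\widetilde X$. The paper's proof of the $k=n-1$ case avoids this by never asserting a Seshadri bound on $\widetilde X$; instead it lifts the annihilating class, manufactures high-multiplicity effective representatives upstairs, and pushes them down to $X$ where the original bound applies. If you pursue the general case, that is the correct shape of the reduction.
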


We verify this for curves.

\begin{prop}\label{prop:seshadrinefdualcurves}Let $X$ be a projective variety of dimension $n$ over an algebraically closed field.
Let $\alpha\in\Nef^{n-1}(X)$ such that there exists $\varepsilon>0$ such that
$\sh(\alpha;x)\geq\varepsilon$ for all $x\in X$. Then $\alpha$ is in the strict interior of $\Nef^{n-1}(X)$.
\end{prop}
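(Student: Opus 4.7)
The strategy is to reduce to an application of Theorem~B on a nonsingular alteration, in the spirit of the singular reduction in the proof of Theorem~A. I argue by contradiction: suppose $\alpha$ is not in the strict interior of $\Nef^{n-1}(X)=\Eff_{n-1}(X)^{\vee}$; then there exists a nonzero class $\beta\in\Eff_{n-1}(X)$ with $\alpha\cdot\beta=0$. Take a nonsingular projective alteration $\pi:\widetilde X\to X$ via \cite{dejong96}, and set $\widetilde\alpha:=\pi^*\alpha\in\Nef^{n-1}(\widetilde X)$ (pullback preserves nefness of dual classes via the projection formula). Since $\widetilde X$ is smooth, $\widetilde\alpha\in\Mov_1(\widetilde X)$ by \cite[Theorem 11.4.19]{laz04}.

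I first verify that $\sh(\widetilde\alpha;\tilde x_0)\geq\varepsilon$ at a point $\tilde x_0$ chosen in the \'etale locus of $\pi$ lying over the smooth locus of $X$, with $x_0:=\pi(\tilde x_0)$. For any irreducible Cartier divisor $\widetilde L$ through $\tilde x_0$, the pushforward $\pi_*\widetilde L$ is a nonzero effective Weil divisor on $X$ through $x_0$, and Lemma~\ref{lem:multiplicitypush} applied to $\pi|_{\widetilde L}$ gives $\mult_{x_0}\pi_*\widetilde L\geq\mult_{\tilde x_0}\widetilde L$; the projection formula and the Seshadri hypothesis then combine to give $\widetilde\alpha\cdot\widetilde L=\alpha\cdot\pi_*\widetilde L\geq\varepsilon\cdot\mult_{\tilde x_0}\widetilde L$. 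Theorem~B now applies to $\widetilde\alpha$ on smooth $\widetilde X$ and yields finitely many irreducible divisors $\widetilde L_1,\ldots,\widetilde L_r$ whose classes generate the simplicial cone $\widetilde\alpha^{\perp}\cap\Eff^1(\widetilde X)$. I claim each $\widetilde L_i$ must be $\pi$-exceptional: otherwise $\pi_*\widetilde L_i$ would be a nonzero effective Weil divisor on $X$, and choosing any smooth point $x\in\pi(\widetilde L_i)\cap X_{\rm sm}$ (possible by generic smoothness of both $\pi(\widetilde L_i)$ and $X$) forces $\alpha\cdot\pi_*\widetilde L_i\geq\varepsilon\mult_x\pi_*\widetilde L_i>0$, contradicting $\widetilde\alpha\cdot\widetilde L_i=0$.

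To conclude, I lift $\beta$ to a class $\widetilde\beta\in\Eff_{n-1}(\widetilde X)$ with $\pi_*\widetilde\beta$ a positive multiple of $\beta$. For effective representatives $\beta_j=\sum_i t_{ij}[V_{ij}]$ approximating $\beta$, the natural lift is $\widetilde\beta_j:=\sum_i(t_{ij}/d_{ij})[\widetilde V_{ij}]$ with $\widetilde V_{ij}$ the strict transforms and $d_{ij}=\deg\pi|_{\widetilde V_{ij}}$, and a compactness/boundedness argument extracts a subsequential limit $\widetilde\beta$. Then $\widetilde\alpha\cdot\widetilde\beta=\alpha\cdot\pi_*\widetilde\beta=0$ by the projection formula, so Theorem~B(i) gives $[\widetilde\beta]=\sum c_i[\widetilde L_i]$ with $c_i\geq 0$; pushing down, $\beta=\pi_*[\widetilde\beta]=\sum c_i\pi_*[\widetilde L_i]=0$ since each $\widetilde L_i$ is $\pi$-exceptional, contradicting $\beta\neq 0$.

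The main obstacle I anticipate is the lifting step, specifically extracting a convergent subsequence from $\widetilde\beta_j$ in $\Eff_{n-1}(\widetilde X)$. This requires controlling the $\widetilde H$-degrees of strict transforms uniformly in terms of the $H$-degrees on $X$, a technical compactness statement about strict transforms of bounded-degree cycles. If direct boundedness fails, one can rescale the $\widetilde\beta_j$ to the unit sphere and analyze limits lying in $\ker\pi_*\cap\Eff_{n-1}(\widetilde X)$ separately; alternatively, one might avoid the lift altogether by arguing with the sequence directly, combining the identity $\widetilde\alpha\cdot\widetilde\beta_j=\alpha\cdot\beta_j\to 0$ with the structural description of $\widetilde\alpha^{\perp}\cap\Eff^1(\widetilde X)$ from Theorem~B.
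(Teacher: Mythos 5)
Your outline is sound but takes a genuinely different middle route from the paper. The paper's proof never invokes Theorem~B: it lifts the offending class $[L]$ (your $\beta$) to $[\bar L]\in\Eff_{n-1}(\widetilde X)$ by citing \cite[Corollary 3.22]{flpos}, takes the divisorial Zariski decomposition $\bar L=P_{\sigma}(\bar L)+N_{\sigma}(\bar L)$, kills $\pi_*N_{\sigma}(\bar L)$ by evaluating the Seshadri hypothesis at a point of its support, and then reruns the Theorem~A argument (Nakayama's linear growth of $h^0(\lfloor mP_{\sigma}\rfloor+A)$ plus Lemma~\ref{lem:multiplicitypush}) to contradict $\alpha\cdot L=0$. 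You instead transfer positivity to a single point upstairs, apply Theorem~B to $\pi^*\alpha\in\Mov_1(\widetilde X)$, show the simplicial generators of $(\pi^*\alpha)^{\perp}\cap\Eff^1(\widetilde X)$ are $\pi$-exceptional, and push forward. This is a legitimate and arguably cleaner reduction, since it reuses Theorem~B as a black box rather than repeating its multiplicity argument; what it costs you is that you still need the same lifting statement the paper needs, and you have to verify the transfer $\sh(\pi^*\alpha;\tilde x_0)\geq\varepsilon$, which you do correctly via the projection formula and Lemma~\ref{lem:multiplicitypush}.

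The one incomplete step is the lift of $\beta$, which you yourself flag. This is exactly where the paper quotes \cite[Corollary 3.22]{flpos} (surjectivity of $\pi_*$ on pseudo-effective cones for an alteration), so you may simply cite it; alternatively your hands-on construction can be completed with the tools the paper uses in Proposition~\ref{prop:singularnulllocus}(ii): write $\pi^*H=A+E$ with $A$ ample and $E$ effective, note that components of the strict transforms lying inside $\Supp E$ are among finitely many fixed divisors with bounded coefficients, while for the remaining components $A^{n-1}\cdot[\widetilde V_{ij}]\leq \pi^*H^{n-1}\cdot[\widetilde V_{ij}]=d_{ij}\,H^{n-1}\cdot[V_{ij}]$ by \cite[Lemma 4.12]{fl16sw} and the projection formula, so \cite[Theorem 1.4.(3)]{flpos} gives boundedness and a convergent subsequence. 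Two smaller corrections. First, in the exceptionality argument do not insist on a smooth point $x\in\pi(\widetilde L_i)\cap X_{\rm sm}$: your justification by ``generic smoothness'' fails when $\pi(\widetilde L_i)\subseteq\Sing X$, which is possible; but no regularity is needed, since the hypothesis $\sh(\alpha;x)\geq\varepsilon$ holds at every possibly singular point and the definition of $\sh(\alpha;\cdot)$ for nef dual classes imposes no Cartier or smoothness condition, so any $x\in\Supp\pi_*\widetilde L_i$ gives the contradiction. Second, replace ``\'etale locus'' by ``finite locus'' of $\pi$ (as the paper does): in positive characteristic an alteration need not be generically \'etale, whereas the finite locus is always a dense open set, and finiteness near $\tilde x_0$ is all that Lemma~\ref{lem:multiplicitypush} and the non-contraction of divisors through $\tilde x_0$ require.
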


Note that the converse is provided by Lemma \ref{lem:nefkinteriorlowerbound}.
The case when $X$ is smooth is covered by Theorem A, since in this case 
$N^{n-1}(X)=N_1(X)$ and $\Nef^{n-1}(X)=\Mov_1(X)$. The proof is similar to Theorem A.

\begin{proof}If $\alpha$ is not an interior class, then there exists $0\neq[L]\in\Eff_{n-1}(X)$
such that $\alpha\cdot L=0$. Let $\pi:\widetilde X\to X$ be a nonsingular alteration.
By \cite[Corollary 3.22]{flpos} there exists $[\bar L]\in\Eff_{n-1}(\widetilde X)$ with $\pi_*[\bar L]=[L]$.
Consider the divisorial Zariski decomposition $\bar L=P_{\sigma}(\bar L)+N_{\sigma}(\bar L)$.
From the projection formula and the nefness of $\alpha$, we deduce $\pi^*\alpha\cdot P_{\sigma}(\bar L)=0$ and $\alpha\cdot\pi_*N_{\sigma}(\bar L)=0$.
Note that $N_{\sigma}(\bar L)$ is an effective $\bb R$-divisor. 
If $\pi_*N_{\sigma}(\bar L)\neq 0$, for any point $x$ in its support we find a contradiction
$0=\alpha\cdot\pi_*N_{\sigma}(\bar L)\geq\varepsilon\mult_x\pi_*N_{\sigma}(\bar L)>0$. 
We deduce that $\pi_*N_{\sigma}(\bar L)=0$. Since $L$ is not numerically trivial, neither is $P_{\sigma}(\bar L)$. It is movable, and hence there exists $A$ ample on $\widetilde X$ such that
$\dim H^0\bigl(\widetilde X,\cal O_{\widetilde X}(\lfloor mP_{\sigma}(\bar L)\rfloor+A)\bigr)$ grows at least linearly with $m$. Let $x\in X$ be a point in the finite locus of $\pi$, and let $\tilde x$ be a closed point in $\pi^{-1}\{x\}$. Let $D_m\in|\lfloor mP_{\sigma}(\bar L)\rfloor+A|$ with $\lim_{m\to\infty}\mult_{\tilde x}D_m=\infty$.  Then $\lim_{m\to\infty}\mult_x\pi_*D_m=\infty$.
In fact $\mult_x\pi_*D_m\geq\mult_{\tilde x}D_m$ by Lemme \ref{lem:multiplicitypush}. As in Theorem A we find that 
$\alpha\cdot(mL+\pi_*A)=\pi^*\alpha\cdot(mP_{\sigma}(\bar L)+A)\geq\pi^*\alpha\cdot D_m=\alpha\cdot \pi_*D_m\geq\varepsilon\cdot\mult_x\pi_*D_m$ grows to infinity.
This is impossible, since $\alpha\cdot L=0$. 
\end{proof}

\begin{prop}Let $X$ be a projective variety, and let $x\in X$ be a possibly singular point.
Let $\alpha\in\Nef^k(X)$. Let $E$ be the exceptional divisor of the blow-up $\pi:\bl_xX\to X$.
Then 
$$\sh(\alpha;x)=\max\bigl\{t\st\pi^*\alpha+t(-E)^k\in\Nef^k(\bl_xX)\bigr\}.$$
\end{prop}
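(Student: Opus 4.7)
The plan is to establish both inequalities bounding the set $S:=\{t\in\mathbb R\st\pi^*\alpha+t(-E)^k\in\Nef^k(\bl_xX)\}$ by $\sh(\alpha;x)$. Since $\Nef^k$ is closed, $S$ is closed; and $0\in S$ because $\pi^*\alpha$ is nef (by the projection formula: for any effective $k$-cycle $W$ on $\bl_xX$, $\pi^*\alpha\cdot W=\alpha\cdot\pi_*W\geq 0$, since $\pi_*W$ is effective). Step 1 will bound $S$ from above, so its supremum is a maximum.

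\emph{Step 1 ($\sup S\leq\sh(\alpha;x)$).} For $t\in S$ and any irreducible $k$-subvariety $V\subseteq X$ through $x$, I will pair the nef class $\pi^*\alpha+t(-E)^k$ with the effective strict transform $\overline V$. The projection formula yields $\pi^*\alpha\cdot\overline V=\alpha\cdot V$, and the multiplicity formula \eqref{multform} yields $(-E)^k\cdot\overline V=-\mult_xV$. Nonnegativity of the pairing forces $t\leq(\alpha\cdot V)/\mult_xV$; taking the infimum over such $V$ then gives $t\leq\sh(\alpha;x)$.

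\emph{Step 2 ($\sh(\alpha;x)\in S$).} I will test the candidate class against an arbitrary irreducible $k$-subvariety $W\subseteq\bl_xX$, splitting into two cases. If $W\not\subseteq E$, then since $\pi$ is an isomorphism off $E$, $W=\overline V$ for $V:=\pi(W)$, and the computation of Step 1 in reverse shows the pairing equals $\alpha\cdot V-\sh(\alpha;x)\mult_xV$, which is nonnegative by definition of $\sh(\alpha;x)$ when $x\in V$ and reduces to $\alpha\cdot V\geq 0$ when $x\notin V$ (since then $\overline V\cap E=\varnothing$, so $(-E)^k\cdot\overline V=0=\mult_xV$). If instead $W\subseteq E$, then $\pi_*W=0$ so $\pi^*\alpha\cdot W=\alpha\cdot\pi_*W=0$. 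Writing $W=j_*W'$ for $W'$ a $k$-cycle on $E$ via the closed embedding $j\colon E\hookrightarrow\bl_xX$, iterated applications of the projection formula give $(-E)^k\cdot W=\deg\bigl(((-E)|_E)^k\cdot W'\bigr)$. Since $-E$ is the relative $\cal O(1)$ of the blow-up, $(-E)|_E$ is ample on $E$, and this degree is nonnegative. Combined with $\sh(\alpha;x)\geq 0$ (which follows from $\alpha$ being nef and $\mult_xV>0$ for any irreducible $V\ni x$), the full pairing is nonnegative, so $\pi^*\alpha+\sh(\alpha;x)(-E)^k$ is nef.

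The only real obstacle is the case $W\subseteq E$ of Step 2, where one must transfer the intersection to $E$ and invoke the ampleness of $(-E)|_E$. This rests on $-E$ being the relative Serre twist of $\pi$, which is a property of $\bl_xX$ insensitive to whether $x$ is smooth or singular, and is exactly what permits the statement to hold in the stated generality for possibly singular points.
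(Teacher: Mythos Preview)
Your proof is correct and follows essentially the same approach as the paper's. The paper's proof simply says it is analogous to Proposition~\ref{prop:seshadriviapull}, singling out exactly the case you call $W\subseteq E$ as the only point needing extra justification, and disposing of it with the same observation you make: $\pi^*\alpha\cdot W=0$ by the projection formula, and $(-E)^k\cdot W\geq 0$ because $-E$ restricts to an ample class on the exceptional locus.
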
  

\begin{proof}The proof is analogous to Proposition \ref{prop:seshadriviapull}.
The only part that may require additional explanation is that $(\pi^*\alpha+t(-E)^k)\cdot Z\geq 0$
when $Z$ is an effective cycle mapped to $x$ by $\pi$ and $t\geq 0$. This is because $-E|_Z$ is ample,
and because $\pi^*\alpha\cdot Z=0$ by the projection formula.
\end{proof}

\begin{lem}Let $\pi:X\to Y$ be a generically finite dominant morphism of projective varieties.
Let $\alpha\in\Nef^k(Y)$. Then $\sh(\pi^*\alpha;x)\geq\sh(\alpha;\pi(x))$ for all $x$ such
that $\pi$ is finite around $x$.
\end{lem}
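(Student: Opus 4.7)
\emph{Plan of proof.} The natural strategy is to unwind the infimum defining $\sh(\pi^*\alpha;x)$ one effective $k$-cycle at a time, convert the numerator via the projection formula to an intersection on $Y$, and then use Lemma \ref{lem:multiplicitypush} to compare $\mult_xZ$ with $\mult_{\pi(x)}\pi_*Z$. The result then drops out from the definition of $\sh(\alpha;\pi(x))$.

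Concretely, I would fix an effective $k$-cycle $Z$ on $X$ containing $x$ and write $Z=\sum a_iV_i$ with $a_i>0$ and $V_i$ irreducible. Components not passing through $x$ contribute nonnegatively to $\pi^*\alpha\cdot Z$ (note that $\pi^*\alpha$ is nef since, by the projection formula, $\pi^*\alpha\cdot[W]=\alpha\cdot\pi_*[W]\geq 0$ for any effective $k$-cycle $W$ on $X$) while leaving $\mult_xZ$ unchanged, so I may and will assume $x\in V_i$ for every $i$. The hypothesis that $\pi$ is finite in a neighborhood of $x$ forces $\pi|_{V_i}$ to be finite near $x$ in $V_i$; in particular $\dim\pi(V_i)=\dim V_i=k$, so $\pi_*V_i=\bigl(\deg\pi|_{V_i}\bigr)\cdot\pi(V_i)$ is a nonzero effective $k$-cycle whose support contains $\pi(x)$. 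Thus $\pi_*Z$ is an effective $k$-cycle on $Y$ passing through $\pi(x)$.

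Now apply the projection formula $\pi^*\alpha\cdot Z=\alpha\cdot\pi_*Z$, and apply Lemma \ref{lem:multiplicitypush} to each restriction $\pi|_{V_i}\colon V_i\to\pi(V_i)$ (proper and generically finite, finite near $x$) to get $\mult_{\pi(x)}\pi_*V_i\geq\mult_xV_i$. Summing against the $a_i$ and using linearity of $\mult_{\pi(x)}$ on $k$-cycles (cf.\ the discussion around \eqref{multform}) yields $\mult_{\pi(x)}\pi_*Z\geq\mult_xZ$. Since $\alpha$ is nef, $\alpha\cdot\pi_*Z\geq 0$, and
\[
\frac{\pi^*\alpha\cdot Z}{\mult_xZ}\;=\;\frac{\alpha\cdot\pi_*Z}{\mult_xZ}\;\geq\;\frac{\alpha\cdot\pi_*Z}{\mult_{\pi(x)}\pi_*Z}\;\geq\;\sh(\alpha;\pi(x)).
\]
Taking the infimum over all admissible $Z$ on the left gives the desired inequality.

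I do not anticipate a real obstacle: the proof is essentially a two-line assembly of the projection formula and Lemma \ref{lem:multiplicitypush}. The only bookkeeping steps are the reduction to cycles all of whose components contain $x$ and checking that the finiteness hypothesis near $x$ guarantees $\pi_*Z$ is a genuine effective $k$-cycle through $\pi(x)$; both are immediate from the assumptions.
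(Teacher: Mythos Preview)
Your proof is correct and follows essentially the same approach as the paper: apply the projection formula to convert $\pi^*\alpha\cdot Z$ to $\alpha\cdot\pi_*Z$, and use Lemma~\ref{lem:multiplicitypush} to bound $\mult_xZ\leq\mult_{\pi(x)}\pi_*Z$. The paper's version is terser because it invokes the remark in the definition that one may restrict to irreducible $k$-subvarieties $Z$ through $x$, thereby skipping your reduction step; your more explicit treatment of arbitrary effective cycles is fine but not needed.
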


\begin{proof}Let $Z$ be a $k$-dimensional subvariety through $x$. By Lemma \ref{lem:multiplicitypush} we have $\mult_{\pi(x)}\pi_*Z\geq\mult_xZ$,
hence $\frac{\pi^*\alpha\cdot[Z]}{\mult_xZ}\geq\frac{\alpha\cdot[\pi_*Z]}{\mult_{\pi(x)}\pi_*Z}\geq\sh(\alpha;\pi(x))$.
\end{proof}

\begin{prop}[Null locus]\label{prop:singularnulllocus} Let $X$ be a projective variety of dimension $n$,
and let $\alpha\in\Nef^{n-1}(X)$ contained in the strict interior of $\Mov_{n-1}(X)^{\vee}$.
Then
\begin{enumerate}[i)]
\item If $[L]\in\Eff_{n-1}(X)$ satisfies $\alpha\cdot[L]=0$, then $L$ is numerically equivalent to
an effective divisor. Furthermore there are only finitely many irreducible effective divisors $L_1,\ldots,L_r$
such that $\alpha\cdot[L_i]=0$.
\item Set ${\rm Null}\,(\alpha):=L_1\cup\ldots\cup L_r$, and $SV\,(\alpha):=\{x\in X\st \sh(\alpha;x)=0\}$. Then
$${\rm Null}\,(\alpha)\subseteq SV\,(\alpha)\subseteq {\rm Null}\,(\alpha)\cup \Sing X.$$
\end{enumerate}
\end{prop}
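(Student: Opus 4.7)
\medskip

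\noindent\textbf{Proof plan.} The strategy is to reduce both statements to the smooth case via a nonsingular alteration $\pi:\widetilde X\to X$ (\cite{dejong96}), where $\pi^*\alpha\in\Nef^{n-1}(\widetilde X)=\Mov_1(\widetilde X)$ and where the full strength of Theorem B applies. By Lemma \ref{lem:nefkinteriorlowerbound} there exists $\varepsilon>0$ with $\sh(\alpha;y)\geq\varepsilon$ for all $y\in X$, and by the lemma on pullbacks of Seshadri constants, $\sh(\pi^*\alpha;\tilde y)\geq\varepsilon$ at every $\tilde y$ in the finite locus of $\pi$. In particular the hypotheses of Theorem B are met for $\pi^*\alpha$ on $\widetilde X$, yielding finitely many irreducible divisors $\overline L_1,\ldots,\overline L_s\subset\widetilde X$ with $\pi^*\alpha\cdot\overline L_j=0$, and the identification $\Null(\pi^*\alpha)=\bigcup_j\overline L_j$.

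For part $i)$, given $[L]\in\Eff_{n-1}(X)$ with $\alpha\cdot L=0$, use \cite[Corollary 3.22]{flpos} to lift to a pseudo-effective class $[\overline L]\in\Eff_{n-1}(\widetilde X)=\Eff^1(\widetilde X)$ with $\pi_*[\overline L]=[L]$. The projection formula gives $\pi^*\alpha\cdot\overline L=0$, so Theorem B$(i)$ on $\widetilde X$ forces $\overline L\equiv N_\sigma(\overline L)$, a nonnegative combination of the $\overline L_j$. Pushing forward, $L\equiv\sum c_j\,\pi_*\overline L_j$ with $c_j\geq 0$; each $\pi_*\overline L_j$ is either zero (if $\overline L_j$ is $\pi$-exceptional) or a positive multiple of an irreducible Weil divisor on $X$. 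Thus $L$ is numerically equivalent to an effective Weil divisor. For finiteness: any irreducible $L_i\subset X$ with $\alpha\cdot L_i=0$ must appear among the nonzero $\pi_*\overline L_j$—indeed, lifting an effective representative of $L_i$ (say via its strict transform plus exceptionals) gives an effective divisor on $\widetilde X$ paired trivially with $\pi^*\alpha$, whose irreducible components are among the $\overline L_j$, and pushing forward only one such component can have image $L_i$.

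For part $ii)$, the inclusion $\Null(\alpha)\subseteq SV(\alpha)$ is immediate since $\alpha\cdot L_i=0$ and $\mult_xL_i>0$ whenever $x\in L_i$. For the reverse inclusion restricted to smooth points, let $x\in X\setminus\Sing X$ with $\sh(\alpha;x)=0$. There exists a sequence of irreducible effective Weil divisors $D_j$ through $x$ with $\tfrac{\alpha\cdot D_j}{\mult_xD_j}\to 0$; normalize by $H^{n-1}\cdot D_j=1$ for some very ample $H$, so that B\'ezout at the smooth point $x$ gives $\mult_xD_j\leq 1$. Lift each $D_j$ to its strict transform $\overline D_j$ on $\widetilde X$ and choose $\tilde x\in\pi^{-1}(x)$ lying in the finite locus of $\pi$; by Lemma \ref{lem:multiplicitypush}, $\mult_{\tilde x}\overline D_j\leq\mult_xD_j$, while $\pi^*\alpha\cdot\overline D_j=\alpha\cdot D_j$ (after possibly adjusting by exceptional contributions that tend to zero in $\pi^*\alpha$-degree). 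Hence $\sh(\pi^*\alpha;\tilde x)=0$, and Theorem B on $\widetilde X$ places $\tilde x$ in some $\overline L_j$. Pushing forward, $x=\pi(\tilde x)\in\pi(\overline L_j)$, which by the construction in part $i)$ equals one of the $L_i$.

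The main obstacle is ensuring the lifting and multiplicity comparison work cleanly at the smooth point $x$ over an arbitrary base field. In characteristic zero one simply chooses a resolution that is an isomorphism on the smooth locus and the proof is transparent. In positive characteristic, one must work with de Jong alterations, which are only generically finite and may contract subvarieties through $\tilde x$; then the needed estimate $\mult_{\tilde x}\overline D_j\leq\mult_xD_j$ (with equality up to controllable exceptional error) relies on $x$ being smooth on $X$ together with Lemma \ref{lem:multiplicitypush}, exactly as in the proof of Theorem A. Once this local analysis is secured, everything else reduces formally to the smooth Theorem B on $\widetilde X$.
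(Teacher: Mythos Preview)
Your proof plan contains a fundamental error at the outset. You invoke Lemma \ref{lem:nefkinteriorlowerbound} to obtain $\sh(\alpha;y)\geq\varepsilon>0$ for all $y\in X$, but that lemma requires $\alpha$ to lie in the strict interior of $\Nef^{n-1}(X)$. The hypothesis here is only that $\alpha$ lies in the strict interior of the \emph{larger} cone $\Mov_{n-1}(X)^{\vee}\supseteq\Nef^{n-1}(X)$. The interesting case of the proposition is precisely when $\alpha$ sits on the boundary of $\Nef^{n-1}(X)$ (so that some $L_i$ exist) while remaining interior to $\Mov_{n-1}(X)^{\vee}$. Indeed, if your claimed uniform bound $\sh(\alpha;\cdot)\geq\varepsilon$ held, then $SV(\alpha)=\emptyset$ and, by Proposition \ref{prop:seshadrinefdualcurves}, $\alpha$ would be interior to $\Nef^{n-1}(X)$, forcing $\Null(\alpha)=\emptyset$ and trivializing the statement. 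Your argument is also internally inconsistent: you first assert $\sh(\pi^*\alpha;\tilde y)\geq\varepsilon$ on the finite locus and then, in part $ii)$, try to deduce $\sh(\pi^*\alpha;\tilde x)=0$ at such a point.

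The paper avoids this by never claiming a uniform Seshadri lower bound. Instead it shows directly, via duality, that $\pi^*\alpha$ lies in the strict interior of $\Mov^1(\widetilde X)^{\vee}$: if $\pi^*\alpha\cdot\widetilde P=0$ for some nonzero movable $\widetilde P$, then $\alpha\cdot\pi_*\widetilde P=0$ with $\pi_*\widetilde P$ movable, forcing $\pi_*\widetilde P=0$, and then a standard slicing argument gives $\widetilde P=0$. This interior condition alone suffices to conclude $P_{\sigma}(\bar L)=0$ for any lift $\bar L$ of an orthogonal class, since $P_{\sigma}(\bar L)$ is movable; no Seshadri positivity is needed.

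Your approach to part $ii)$ also has a gap. To deduce $\sh(\pi^*\alpha;\tilde x)=0$ from the sequence $\overline D_j$ you would need a \emph{lower} bound on $\mult_{\tilde x}\overline D_j$ in terms of $\mult_xD_j$, but Lemma \ref{lem:multiplicitypush} gives the opposite inequality; moreover you have not argued that the chosen $\tilde x$ actually lies on the strict transform $\overline D_j$. The paper instead lifts each $D_j$ to a divisor $\widetilde D_j$ with irreducible support, proves the sequence $[\widetilde D_j]$ is bounded in $N^1(\widetilde X)$ (handling separately the case where $\Supp\widetilde D_j$ is a component of an auxiliary effective divisor), and then reuses the $\sigma_E$-semicontinuity trick from the proof of Theorem \ref{thm:nulllocus} to show the sequence is eventually constant, so that $x\in\Supp D_j\subseteq\Null(\alpha)$.
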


\begin{proof} $i)$. Let $\pi:\widetilde X\to X$ be a nonsingular alteration. We claim that $\pi^*\alpha$
is also in the strict interior of $\Mov_{n-1}(\widetilde X)^{\vee}$. If not, since $\pi^*\alpha$ is nef, there exists $[\widetilde P]\neq 0$ movable on $\widetilde X$ with $\pi^*\alpha\cdot[\widetilde P]=0$. Since $\pi_*\widetilde P\in\Mov_{n-1}(X)$, and $\alpha$ is in the strict interior of the dual cone,
by the projection formula we find that necessarily $\pi_*[\widetilde P]=0$.
Let $H$ be an ample divisor on $X$. By the projection formula $[\widetilde P]\cdot \pi^*H^{n-1}=0$.
A repeated application of \cite[Corollary 3.11, Lemma 3.12]{fl13z} proves $[\widetilde P]=0$.

Assume now $\alpha\cdot[L]=0$. By \cite[Corollary 3.22]{flpos} there exists $[\bar L]\in\Eff_{n-1}(\widetilde X)$ with $\pi_*[\bar L]=[L]$. By the projection formula $\pi^*\alpha\cdot[\bar L]=0$.
Since $\pi^*\alpha$ is in the interior of $\Mov_{n-1}(\widetilde X)^{\vee}$, we deduce that
$\bar L\equiv N_{\sigma}(L)$. Pushing to $X$, it follows that $L\equiv \pi_*N_{\sigma}(\bar L)$.
The latter is an effective divisor as desired. The finiteness statement follows from 
Theorem B.

$ii)$. Clearly $\sh(\alpha;x)=0$ for $x\in L_i$. Assume now that a smooth point $x\in X$ satisfies $\sh(\alpha;x)=0$. 
Let $D_i$ be a sequence of irreducible divisors through $x$ such that 
$\lim_i\frac{\alpha\cdot[D_i]}{\mult_xD_i}=0$. Let $H$ be a very ample divisor on $X$.
As in Theorem B we may assume $|D_i|=1$ (that is $H^{n-1}\cdot[D_i]=1$), and that $\lim_i[D_i]=[D]$ for some (nonzero)
$[D]\in\Eff_{n-1}(X)$ with $\alpha\cdot[D]=0$. 
Let $\pi:\widetilde X\to X$ be a nonsingular alteration. 
Let $\widetilde D_i$ be a divisor with irreducible support on $\widetilde X$
with $\pi_*\widetilde D_i=D_i$. We claim that $[\widetilde D_i]$ form a bounded sequence in $N^1(\widetilde X)$. Assuming this, by passing to a subsequence, we may assume
that $\lim_i[\widetilde D_i]=[\widetilde D]$ for some $\widetilde D$ with $\pi_*[\widetilde D]=[D]$.
By the semicontinuity arguments in the proof of Theorem B, we may assume that $\widetilde D_i$
and so also $D_i$ is a constant sequence. It follows that $[D]$ is represented by a
divisor with irreducible support containing $x$. Since $\alpha\cdot[D]=0$, the support is one of the $L_i$.

For the claim, since $\pi^*H$ is big, there exist divisors $A$ and $E$ on $\widetilde X$
with $A$ ample and $E$ effective such that $\pi^*H=A+E$. If $\widetilde D_i$ meets $E$ properly, then by \cite[Lemma 4.12]{fl16sw}, we have
$A^{n-1}\cdot [\widetilde D_i]\leq \pi^*H^{n-1}\cdot[\widetilde D_i]=H^{n-1}\cdot D_i=1$.
Since multiplying by $A^{n-1}$ is a norm (\cite[Theorem 1.4.(3)]{flpos}), the conclusion follows.
If $\widetilde D_i$ does not meet $E$ properly, then its (irreducible) support is contained in $\Supp E$,
so it is one of the irreducible components of $\Supp E$. From $|D_i|=1$, we again deduce
that up to passing to a subsequence $\widetilde D_i$ is constant, hence so is $D_i$.
\end{proof}

Unlike in Theorem \ref{thm:nulllocus}.iii), if $k$ is arbitrary and $\alpha\in\Nef^k(X)$, it is not generally true that the existence of $x_0\in X$ with $\sh(\alpha;x_0)>0$ implies that $\alpha$ is big. 

\begin{ex}\cite{delv11,OttemNef} construct examples of projective $4$-folds $X$ where the unexpected inclusion $\Eff_2(X)\subsetneq\Nef^2(X)$ holds. In particular there exist pseudo-effective non-big classes $\alpha$ in the strict interior of $\Nef^2(X)$. In fact, for such classes we have $\inf_{x\in X}\sh(\alpha;x)>0$ by Lemma \ref{lem:nefkinteriorlowerbound}.

Let $X\subset G(2,6)$ be the variety of lines on a very general cubic fourfold in $\bb P^5$.
With notation as in \cite{OttemNef}, let $\cal U^{\vee}$ be the tautological rank 2 quotient bundle on $G(2,6)$, and denote $g:=c_1(\cal U^{\vee})$ and $c:=c_2(\cal U^{\vee})$. 
The class $c$ is extremal in $\Eff_2(X)$ by \cite[Proposition 2.4]{voisin}.
The proof of \cite[Theorem 1]{OttemNef} shows that $20c-g^2$ is nef. 
The concavity of Seshadri constants and Lemma \ref{lem:seshadriproduct} imply that $\sh(c;x)\geq\frac 1{20}\sh(g^2;x)\geq\frac 1{20}\sh(g;x)^2\geq\frac 1{20}$ for all $x\in X$.\qed
\end{ex}

\begin{remark}The same examples of nef non-pseudo-effective classes show that we cannot expect a 
geometric interpretation of Seshadri constants for all classes in the interior of $\Nef^k(X)$ as in Theorem C.  
\end{remark}

\nocite{*}
\bibliographystyle{amsalpha}
\bibliography{Seshadri}

\end{document}